\theoremstyle{plain}
\newtheorem{thm}{Theorem}
\newtheorem{lem}[thm]{Lemma}
\newtheorem{cor}[thm]{Corollary}
\newtheorem{prop}[thm]{Proposition}
\theoremstyle{definition}
\newtheorem{defn}[thm]{Definition}
\newtheorem{question}[thm]{Question}
\newtheorem{rmks}[thm]{Remarks}
\newtheorem{nota}[thm]{Notation}
\numberwithin{thm}{section} \numberwithin{equation}{section}
\newcommand{\ga}[2]{\begin{gather}\label{#1}#2 \end{gather}}
\newcommand{\sC}{{\mathcal C}}
\newcommand{\sD}{{\mathcal D}}
\newcommand{\sE}{{\mathcal E}}
\newcommand{\sF}{{\mathcal F}}
\newcommand{\sH}{{\mathcal H}}
\newcommand{\sL}{{\mathcal L}}
\newcommand{\sM}{{\mathcal M}}
\newcommand{\sN}{{\mathcal N}}
\newcommand{\sO}{{\mathcal O}}
\newcommand{\sP}{{\mathcal P}}
\newcommand{\sQ}{{\mathcal Q}}
\newcommand{\sR}{{\mathcal R}}
\newcommand{\sU}{{\mathcal U}}
\newcommand{\sW}{{\mathcal W}}
\newcommand{\sX}{{\mathcal X}}
\def\wt#1{\widetilde {#1}}
\begin{document}

\title{F-regular type of Moduli spaces and Verlinde Formula}
\author{Xiaotao Sun}
\address{Center of Applied Mathematics, School of Mathematics, Tianjin University, No.92 Weijin Road, Tianjin 300072, P. R. China}
\email{xiaotaosun@tju.edu.cn}
\author{Mingshuo Zhou}
\address{Center of Applied Mathematics, School of Mathematics, Tianjin University, No.92 Weijin Road, Tianjin 300072, P. R. China}
\email{zhoumingshuo@amss.ac.cn}
\date{February 18, 2018}
\thanks{}
\begin{abstract} We prove that moduli spaces of semistable parabolic bundles and generalized parabolic sheaves (GPS)
with a fixed determinant on a smooth projective curve are globally $F$-regular type. As an application, we prove vanishing theorems
on the moduli spaces of semistable parabolic sheaves on a singular curve, which combining with Factorization theorems in \cite{Su1} and \cite{Su2}
give two recurrence relations among dimensions of spaces of generalized theta functions. By using of these recurrence relations, we prove an
explicit formula (Verlinde formula) for the dimension of spaces of generalized theta functions.
\end{abstract}
\keywords{Globally $F$-regular type,  Moduli spaces, Parabolic sheaves}
\subjclass{Algebraic Geometry, 14H60, 14D20}
\maketitle
\begin{quote}

\end{quote}
\section{Introduction}
 Let $X$ be a variety over a perfect field $k$ of characteristic $p>0$ and $F:X\to X$ be the Frobenius morphism. The $X$ is called
  $F$-split (Frobenius split) if the natural homomorphism $\sO_X\hookrightarrow F_*\sO_X$ is split. Although most of
 projective varieties are not $F$-split, some important varieties are $F$-split. For example,  flag varieties and
 their Schubert subvarieties (cf. \cite{MeRa}, \cite{RR}), the product of
two flag varieties for the same group $G$ (cf. \cite{MeRa88}) and cotangent bundles of flag varieties (cf. \cite{KLT}) are proved to be $F$-split.
An example, which is more closer to this article, should be mentioned. Mehta-Ramadas proved in \cite{MeR} that for a generic
nonsingular projective curve $C$ of genus $g$ over an algebraically closed field of characteristic $p\ge 5$, the moduli space of semistable parabolic bundles of rank $2$ on $C$ is $F$-split, and made conjecture that for any nonsingular projective curve $C$, the moduli space of semistable parabolic bundles of rank $2$ on $C$ with a fixed determinant is $F$-split.

The notion of globally $F$-regular variety was introduced by K. E. Smith in \cite{Sm}, a variety $X$ is called globally $F$-regular if for any effective
divisor $D$, the natural homomorphism $\sO_X\hookrightarrow F^e_*\sO_X(D)$ is split for some integer $e>0$. It is clear that globally $F$-regular varieties must be
$F$-split. Also, some well-known $F$-split varieties include toric varieties and Schubert varieties are proved in \cite{Sm} and \cite{LRT} to be globally $F$-regular.
Thus it is natural to extend Mehta-Ramadas conjecture: the moduli spaces $\sU_{C,\,\omega}^L$ of semistable parabolic bundles of rank $r$ with a fixed determinant $L$ on any smooth curves $C$ (parabolic structures determined by a given data $\omega$) are globally $F$-regular varieties.
It remains to be a very difficult open problem, we will study its characteristic zero analogy in this article.

A variety $X$ over a field of characteristic zero is called globally $F$-regular type (resp. $F$-split type) if its modulo $p$ reduction $X_p$ is
globally $F$-regular (resp. $F$-split) for a dense set of $p$. Projective varieties $X$, which are globally F-regular type, have remarkable geometric and
cohomological properties: (1) $X$ must be normal, Cohen-Macaulay with rational singularities, and must have log terminal singularities if it is $\mathbb{Q}$-Gorenstein;
(2) $H^i(X,\sL)=0$ for $i>0$ and nef line bundle $\sL$.

Let $\sU_{C,\,\omega}$ be moduli spaces of semistable parabolic bundles
of rank $r$ and degree $d$ on smooth curves $C$ of genus $g\ge 0$ with parabolic structures determined by $\omega=(k,\{\vec n(x),\vec a(x)\}_{x\in I})$, and
$${\rm det}: \sU_{C,\,\omega}\to J^d_C$$
be the determinant morphism. For any $L\in J^d_C$, the fiber $$\sU_{C,\,\omega}^L:={\rm det}^{-1}(L)$$ is called moduli space of semistable parabolic bundles with
a fixed determinant $L$. Then the first main result in this article is

\begin{thm}[See Theorem \ref{thm3.7} and Theorem \ref{thm4.1}]\label{thm1.1} The moduli spaces $\sU_{C,\,\omega}^L$ are of globally $F$-regular type and
$${\rm H}^i(\sU_{C,\,\omega},\sL)=0 \quad \forall\,\,i>0$$
for any ample line bundle $\sL$ on $\sU_{C,\,\omega}$.
\end{thm}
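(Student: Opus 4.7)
The plan is to prove the two parts of the theorem in order: first that $\sU_{C,\omega}^L$ is of globally $F$-regular type (Theorem 3.7 in the paper), then to deduce the cohomology vanishing (Theorem 4.1) as a consequence via Smith's Kodaira-type vanishing. The overall strategy for the first part is reduction modulo $p$: spread the data $(C,\omega,L)$ out over a finitely generated $\Z$-subalgebra $A\subset\C$, form the relative moduli space over $\Spec A$, and show that for a dense set of closed points of $\Spec A$ the geometric fiber is globally $F$-regular.

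In characteristic $p>0$, the goal is to produce, for every effective divisor $D$ on the fiber $X_p=\sU_{C_p,\omega}^L$, an integer $e>0$ together with a splitting of $\sO_{X_p}\hookrightarrow F^e_*\sO_{X_p}(D)$. The natural route is via the generalized parabolic sheaves side of the paper: realize $\sU_{C_p,\omega}^L$ as an open subscheme of the GPS moduli space attached to a nodal degeneration of $C$, and use the factorization structure of \cite{Su1, Su2} to express the GPS moduli in terms of fibrations over products of parabolic moduli on components of smaller complexity. This sets up an induction on $2g-2+|I|$. The base case $g=0$ reduces to moduli of parabolic bundles on $\P^1$, which are good GIT quotients of products of flag varieties by $\mathrm{SL}_N$; flag varieties are globally $F$-regular by Lauritzen--Raben-Pedersen--Thomsen \cite{LRT}, so the base case reduces to producing the appropriate compatible splittings on the quotient by a direct summand argument in the spirit of Smith.

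For Theorem 4.1, once globally $F$-regular type is established the statement is essentially automatic: for a dense set of primes $p$ the reduction $X_p$ is globally $F$-regular, so Smith's vanishing gives $H^i(X_p,\sL_p)=0$ for every $i>0$ and every nef, hence in particular every ample, line bundle. Upper-semicontinuity of cohomology in the flat family over $\Spec A$ then transfers this vanishing to the generic fiber $X=\sU_{C,\omega}^L$ in characteristic zero.

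The main obstacle is the inductive step in the second paragraph. The factorization theorems of \cite{Su1, Su2} compare parabolic moduli on $C$ with GPS moduli on a nodal degeneration, and further with parabolic moduli on the normalization, but one must propagate global $F$-regularity through these comparisons: the comparison morphisms need to be open immersions onto a sufficiently large open set or fibrations whose fibers (projective bundles, flag bundles, or GIT quotients of such) preserve global $F$-regularity, and the splittings must be compatible with the parabolic weights and the fixed determinant at every step. Keeping these compatibilities aligned across the induction, and dealing with the fact that $\mathrm{SL}_N$ is not linearly reductive in positive characteristic, is where the real technical work lies.
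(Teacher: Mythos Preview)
Your approach to the first part is genuinely different from the paper's and has real gaps. The paper does \emph{not} argue by induction on $2g-2+|I|$, nor does it use nodal degenerations or the factorization theorems of \cite{Su1,Su2} to prove $F$-regular type. Instead it stays entirely in characteristic zero: one enlarges the set of parabolic points $I$ to $I\cup I'$ so that $(r-1)(g-1)+\frac{|I|+|I'|}{2r}\ge 2$, and equips the new points with the canonical weights $\omega_c$ of Proposition~\ref{prop3.6}. For this special choice the anti-canonical bundle of the moduli space $Z=\sU^L_{C,\omega_c}$ is computed explicitly and shown to be ample, so $Z$ is Fano with rational singularities; Smith's Proposition~\ref{prop2.6} then gives that $Z$ is of globally $F$-regular type. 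The passage back to the original $\sU^L_{C,\omega}$ is via the forgetful flag-bundle morphism $\hat f:\sR'^L_F\to\sR^L_F$ and the GIT comparison lemma (Proposition~\ref{prop2.10}), which produces a morphism $f:X\to\sU^L_{C,\omega}$ from an open $X\subset Z$ with $f_*\sO_X=\sO_{\sU^L_{C,\omega}}$, and checks this identity survives reduction mod $p$ (Lemma~\ref{lem2.9}). No induction, no degeneration, no direct splitting in characteristic $p$.

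Your sketch has two specific problems. First, the claim that $\sU^L_{C_p,\omega}$ is an open subscheme of a GPS moduli space attached to a nodal degeneration is not right: the GPS moduli normalizes the moduli over the \emph{singular} fiber, while $\sU^L_{C_p,\omega}$ sits over the smooth fiber; these are related only through a family over the base, and global $F$-regularity does not propagate along such families without exactly the kind of argument you defer. The factorization theorems concern $H^0$ of theta line bundles, not fibration structures on the moduli spaces themselves, so they do not supply the geometric comparison your induction would need. Second, for the vanishing you have conflated $\sU^L_{C,\omega}$ and $\sU_{C,\omega}$: the theorem asserts vanishing on the latter, which is \emph{not} itself of globally $F$-regular type (indeed $H^i(\sU_{C,\omega},\sO)\ne 0$ in general since it maps to the Jacobian). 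The paper handles this by the $r^{2g}$-fold \'etale cover $J^0_C\times\sU^L_{C,\omega}\to\sU_{C,\omega}$, then uses $H^1(\sU^L_{C,\omega},\sO)=0$ (from $F$-regular type) to split any ample line bundle on the product as a box product and apply K\"unneth.
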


When the projective curve $C$ has exactly one node (irreducible, or reducible), the moduli space $\sU_{C,\,\omega}$ is not normal and its normalization
is a moduli space $\sP_{\omega}$ of semistable generalized parabolic sheaves (GPS) on $\wt C$ (where $\wt{C}$ is normalization of $C$). There exist a similar determinant morphism
${\rm det}:\sP_{\omega}\to J^d_{\wt{C}}$. For any $L\in J^d_{\wt{C}}$, the fiber
$$\sP^L_{\omega}:={\rm det}^{-1}(L)$$
is called a moduli space of semistable generalized parabolic sheaves (GPS) with a fixed determinant $L$ on $\wt C$. Then the second main result in this article is
\begin{thm}[See Theorem \ref{thm3.14} and Theorem \ref{thm3.22}]\label{thm1.2} The moduli spaces $\sP_{\omega}^L$ are of globally $F$-regular type.
\end{thm}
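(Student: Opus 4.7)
The plan is to reduce Theorem~\ref{thm1.2} to Theorem~\ref{thm1.1}. A point of $\sP_{\omega}^L$ is a pair $(E,Q)$ where $E$ is a semistable parabolic sheaf of rank $r$ and determinant $L$ on $\wt C$ (with parabolic structure $\omega'$ obtained from $\omega$ by adding the preimages $x_1,x_2$ of the node as parabolic points), and $Q$ is an $r$-dimensional quotient of $E_{x_1}\oplus E_{x_2}$. Writing $R^{ss}$ for the semistable Quot-scheme used to construct $\sU_{\wt C,\,\omega'}^L$ in the proof of Theorem~\ref{thm1.1} and $\sE$ for the universal parabolic sheaf, I would form the relative Grassmannian
$$\wt R\;=\;{\rm Gr}_{R^{ss}}\bigl(r,\,\sE_{x_1}\oplus\sE_{x_2}\bigr),$$
equip it with an $SL(N)$-linearized ample line bundle realizing the GPS stability condition, and identify $\sP_{\omega}^L$ with the GIT quotient $\wt R^{ss}/\!/SL(N)$.

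The proof then runs in the now-standard globally $F$-regular pattern. First, spread the construction over a finitely generated $\Z$-subalgebra of $\C$ so that $R^{ss}_p$ and $\wt R_p$ are defined for a dense set of primes $p$. The proof of Theorem~\ref{thm1.1} delivers, in characteristic $p$, a Frobenius splitting that makes the relevant parabolic Quot-scheme globally $F$-regular. Grassmannians are globally $F$-regular by \cite{Sm} and \cite{LRT}, and global $F$-regularity ascends along the relative Grassmannian projection $\wt R_p\to R^{ss}_p$, so $\wt R_p$ is globally $F$-regular. Invoking Smith's theorem \cite{Sm} that good quotients of a globally $F$-regular variety by a reductive group remain globally $F$-regular, one concludes that $\sP_{\omega,p}^L=\wt R_p^{ss}/\!/SL(N)$ is globally $F$-regular for a dense set of $p$, which is exactly what is required.

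The main obstacle is controlling the GIT-semistable locus $\wt R_p^{ss}\subset\wt R_p$: GPS-semistability genuinely involves $Q$, constraining how $Q$ meets the fibers at $x_1,x_2$ of parabolic subsheaves, so $\wt R_p^{ss}$ is strictly smaller than the preimage of $R^{ss}_p$ and carries its own unstable boundary. To descend global $F$-regularity through this GIT step one needs a Frobenius splitting of $\wt R_p$ (or of a suitable projective compactification) that is $SL(N)$-equivariant and compatibly splits both the unstable boundary and the ample divisor cutting out $\wt R_p^{ss}$. Producing such a splitting, most likely by combining the splitting of the parabolic Quot-scheme from the proof of Theorem~\ref{thm1.1} with standard compatible splittings on the Grassmannian factor along the lines of \cite{MeRa} and \cite{RR}, is where the real work lies; once it is in place, the remainder of the argument is essentially formal.
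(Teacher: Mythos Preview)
Your proposal has a structural misunderstanding and a genuine gap.

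First, the misunderstanding: a semistable GPS $(E,Q)$ does \emph{not} have $E$ semistable as a parabolic sheaf. GPS-semistability (Definition~\ref{defn3.8}) is a condition on the pair, and the underlying $E$ can be unstable as a parabolic bundle; indeed $E$ need not even be locally free (it may have torsion supported at $x_1,x_2$). So one cannot form $\wt R$ as a Grassmannian bundle over the parabolic-semistable locus $R^{ss}$; the correct base is a larger open subset $\sH$ of the Grassmannian bundle over the full parameter scheme (Notation~\ref{nota3.10}), and $\sH$ is neither smooth nor obviously globally $F$-regular in characteristic $p$.

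Second, and more seriously, the proof of Theorem~\ref{thm1.1} does not furnish explicit Frobenius splittings of any Quot-scheme in characteristic $p$. The paper works entirely in characteristic zero: it shows $\sU_{C,\omega}^L$ is dominated, via a morphism $f:X\to Y$ with $f_*\sO_X=\sO_Y$, by an open subset $X$ of a Fano variety $Z$ with rational singularities, and invokes Proposition~\ref{prop2.6} for $Z$. The hard technical point is that this domination is $p$-compatible (Proposition~\ref{prop2.10} and Lemma~\ref{lem2.9}). No equivariant compatible splitting of a Quot-scheme is ever produced, so your plan to ``combine the splitting of the parabolic Quot-scheme from the proof of Theorem~\ref{thm1.1} with standard compatible splittings on the Grassmannian factor'' has no input to combine. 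You correctly identify this as ``where the real work lies,'' but you have not supplied a mechanism; the references \cite{MeRa}, \cite{RR} treat flag varieties for a group, not Quot-schemes over a curve, and there is no off-the-shelf splitting here.

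The paper's actual argument for Theorem~\ref{thm1.2} mirrors that of Theorem~\ref{thm1.1}: one enlarges the parabolic set $I$ to $I\cup I'$ so that, for the canonical weight $\omega_c$ of Proposition~\ref{prop3.6}, the moduli space $\sP^L_{\omega_c}$ of GPS is itself Fano with rational singularities (Propositions~\ref{prop3.13} and \ref{prop3.20}). One then sets up the flag-bundle map $\hat f$ between the GPS parameter schemes and verifies the codimension and saturation conditions of Proposition~\ref{prop2.10}, using the estimates of Propositions~\ref{prop3.12} and \ref{prop3.19}. The delicate step, absent from your sketch, is checking $\hat X=\varphi^{-1}\varphi(\hat X)$, which requires analyzing the associated graded $gr(E,Q)$ of a strictly semistable GPS (see \eqref{3.7} and the surrounding argument). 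In the reducible case this analysis is more intricate and forces the use of the open set $W_{\omega_c}$ rather than the full stable locus.
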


As an application of this theorem, we prove vanishing theorems on $\sU_{C,\,\omega}$ for singular curves $C$.

\begin{thm} [See Theorem \ref{thm4.3} and Theorem \ref{thm4.4}]\label{thm1.3} When $C$ is irreducible with at most one node, we have
$$H^1(\sU_{C,\,\omega},\Theta_{\sU_{C,\,\omega}})=0$$
where $\Theta_{\sU_{C,\,\omega}}$ is the theta line bundle. When $C$ is reducible with at most one node, then for any ample line bundle $\sL$ on $\sU_{C,\,\omega}$
$$H^1(\sU_{C,\,\omega},\sL)=0.$$
\end{thm}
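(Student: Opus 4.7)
The plan is to reduce the vanishing statements on $\sU_{C,\,\omega}$ to cohomology vanishing on its normalization $\sP_\omega$, and to establish the latter by combining Theorem~\ref{thm1.2} with the determinant fibration. Let $\pi\colon\sP_\omega\to\sU_{C,\,\omega}$ be the normalization. Since $\pi$ is finite it is affine, so $R^i\pi_*=0$ for $i>0$, and the pullback $\pi^*\sL$ of an ample line bundle $\sL$ (resp.\ of $\Theta_{\sU_{C,\,\omega}}$) is again ample. The determinant map $\det\colon\sP_\omega\to J^d_{\wt C}$ exhibits $\sP_\omega$ as a fibration whose fibers are exactly the spaces $\sP_\omega^L$ covered by Theorem~\ref{thm1.2}.

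On $\sP_\omega$ I would argue as follows. The restriction of $\pi^*\sL$ to any fiber $\sP_\omega^L$ is ample, hence nef, so by Theorem~\ref{thm1.2} and the cohomological properties of globally $F$-regular type varieties recalled in the introduction, $H^i(\sP_\omega^L,\pi^*\sL|_{\sP_\omega^L})=0$ for every $i>0$. Flatness of $\det$ together with cohomology-and-base-change then forces $R^i\det_*\pi^*\sL=0$ for $i>0$, and the Leray spectral sequence degenerates into $H^i(\sP_\omega,\pi^*\sL)\cong H^i(J^d_{\wt C},\det_*\pi^*\sL)$. The pushforward $\det_*\pi^*\sL$ is a vector bundle on the abelian variety $J^d_{\wt C}$ inheriting positivity from the relative ampleness of $\pi^*\sL$; a Kodaira-type vanishing on abelian varieties then delivers $H^i(\sP_\omega,\pi^*\sL)=0$ for $i>0$, which via $R^i\pi_*\pi^*\sL=0$ equals $H^i(\sU_{C,\,\omega},\pi_*\pi^*\sL)$.

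Finally, I would transfer back to $\sU_{C,\,\omega}$ via the normalization sequence $0\to\sO_{\sU_{C,\,\omega}}\to\pi_*\sO_{\sP_\omega}\to\sC\to 0$, where $\sC$ is supported on the non-normal locus. Tensoring with $\sL$ and combining with the preceding vanishing, $H^1(\sU_{C,\,\omega},\sL)$ becomes the cokernel of the natural restriction $H^0(\sP_\omega,\pi^*\sL)\to H^0(\sU_{C,\,\omega},\sL\otimes\sC)$. In the reducible case, $\wt C=C_1\sqcup C_2$ so $J^d_{\wt C}$ splits and the non-normal locus is a clean gluing divisor; its product structure allows one to verify this surjectivity for any ample $\sL$. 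In the irreducible case the non-normal locus is more delicate, and one must exploit the specific identification of $\pi^*\Theta_{\sU_{C,\,\omega}}$ with the GPS theta bundle, whose global sections are the generalized theta functions analyzed in~\cite{Su1},\cite{Su2}; the factorization theorems there produce the required surjection.

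The principal obstacle is precisely this last step---controlling the cokernel along the non-normal locus. Steps one and two follow mechanically from Theorem~\ref{thm1.2} and standard cohomological descent, but the surjectivity onto $H^0(\sU_{C,\,\omega},\sL\otimes\sC)$ requires a precise local description of $\pi_*\sO_{\sP_\omega}/\sO_{\sU_{C,\,\omega}}$ along the strictly-semistable and nodal loci. This is also what dictates the asymmetry in the statement: the argument with an arbitrary ample $\sL$ works only when the product structure of $J^d_{\wt C}$ tames the non-normal locus (reducible case), whereas in the irreducible case one is forced to specialize to $\Theta_{\sU_{C,\,\omega}}$ in order to invoke the generalized-theta-function machinery of \cite{Su1},\cite{Su2}.
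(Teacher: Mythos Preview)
Your architecture matches the paper's---pass to the normalization $\sP_\omega$, establish vanishing there via the determinant map and Theorem~\ref{thm1.2}, then descend---and you correctly flag the descent step as the crux. But two steps contain genuine gaps, and your diagnosis of the asymmetry between the two cases is misplaced.

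The first gap is the assertion that ``a Kodaira-type vanishing on abelian varieties'' handles $H^i(J^d_{\wt C},\det_*\pi^*\sL)$: relative ampleness of $\pi^*\sL$ does not by itself give $\det_*\pi^*\sL$ the positivity needed for such a vanishing. The paper treats the two cases quite differently here. In the irreducible case it works only for $\Theta$, invoking an explicit decomposition of $\det_*\Theta_{\sP_\omega}$ on $J^d_{\wt C}$ (Remark~4.2 of \cite{Su1}, Lemma~5.2 of \cite{Su3}). In the reducible case it abandons the Leray--determinant approach altogether and instead passes to the finite cover $\sP_\omega^L\times J^0_{\wt C}\to\sP_\omega$; ampleness pulls back, $H^1(\sP_\omega^L,\sO)=0$ forces the pullback to be an external tensor product, and K\"unneth finishes. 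This finite-cover trick is what allows arbitrary ample $\sL$ in the reducible case, so the asymmetry in the theorem lives at the Jacobian step, not at the non-normal locus. The second gap is the descent itself. The surjectivity you want is, via your own exact sequence, equivalent to the injectivity of $\phi^*\colon H^1(\sU_{C,\omega},\sL)\to H^1(\sP_\omega,\phi^*\sL)$; the paper simply quotes this as Lemma~5.5 of \cite{Su1} (recorded here as Lemma~\ref{lem4.2}) and uses it uniformly in both cases. Your proposed justification in the irreducible case, appealing to the factorization theorems of \cite{Su1} and \cite{Su2}, is circular: those factorizations are consequences of Theorem~\ref{thm1.3} (see Corollary~\ref{cor1.4}), not inputs to it.
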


To describe the idea of proof, recall that the moduli space $\sU^L_{C,\,\omega}$ is
a GIT quotient $(\sR^{ss}_{\omega})^L//{\rm SL}(V)$, where $(\sR^{ss}_{\omega})^L\subset\sR^L_F$ is a open set of a quasi-projective variety $\sR^L_F$ ( i.e. the
set of GIT semistable points respect to a polarization $\Theta_{\sR,\,\omega}$ determined by $\omega$). Then our idea is to find
a flag bundle ${\sR'}^L_F\xrightarrow{\hat f}\sR^L_F$ over $\sR^L_F$ and a data $\omega'$ such that $\sU^L_{C,\,\omega'}=({\sR'}_{\omega'}^{ss})^L//{\rm SL}(V)$
is a Fano variety with an open subvariety $X\subset\sU^L_{C,\,\omega'}$ and a morphism $X\xrightarrow{f}\sU^L_{C,\,\omega}$ satisfying
$f_*\sO_X=\sO_{\sU^L_{C,\,\omega}}.$
Since Fano varieties are globally $F$-regular type by Proposition 6.3 of \cite{Sm}, so are $X$ and $\sU^L_{C,\,\omega}$ if the equality $f_*\sO_X=\sO_{\sU^L_{C,\,\omega}}$
commutes with modulo $p$ reductions for a dense set of $p$. To prove that $f_*\sO_X=\sO_{\sU^L_{C,\,\omega}}$
commutes with modulo $p$ reductions for a dense set of $p$, one has to show in particular that a GIT quotient over $\mathbb{Z}$ must commute
with modulo $p$ reductions for a dense set of $p$, which is Lemma \ref{lem2.9} (we thought at first that Lemma \ref{lem2.9} must be well-known to experts,
but finally we are not able to find any reference). We have formulated our idea in Proposition \ref{prop2.10} for a general setting, the proof of Theorem \ref{thm1.1} and Theorem \ref{thm1.2} becomes to check
conditions in Proposition \ref{prop2.10}. To prove Theorem \ref{thm1.3}, consider the normalization morphism $\phi:\sP_{\omega}\to\sU_{C,\,\omega}$, then we believe that
\ga{1.1} {\phi^*: H^i(\sU_{C,\,\omega},\sL)\to H^i(\sP_{\omega}, \phi^*\sL)}
is injective. Unfortunately, we are only able to check injectivity of \eqref{1.1} when $i=1$. Thus Theorem \ref{thm1.3} is reduced to
the proof of some vanishing theorems on $\sP_{\omega}$ (cf. Theorem \ref{thm4.3}, Theorem \ref{thm4.4}).

Our starting motivation was to give an algebraic geometric proof of a formula (so called Verlinde formula) from Rational Conformal Field Theories (RCFT)
(cf. \cite{Ver}). RCFT is defined to be a functor which associates
to any marked projective curve $(C, I, \{\vec a(x)\}_{x\in I})$ a finite-dimensional vector
space $V_C(I, \{\vec a(x)\}_{x\in I})$ satisfying certain axioms (A0--A4) (cf. \cite{Be3} for the detail). The axioms, in particular the factorization rules (A2 and A4),
can be encoded in a finite-dimensional $\mathbb{Z}$-algebra, the fusion ring of the theory. An explicit formula (so called Verlinde formula) for the dimension of $V_C(I, \{\vec a(x)\}_{x\in I})$ can be obtained in terms of the characters of the fusion ring (See Proposition 3.3 of \cite{Be3}).

An important example of RCFT was constructed for a Lie algebra $\mathfrak{g}$ in \cite{TUY} (WZW-models) by associating a space $V_C(\mathfrak{g}, I, \{\vec a(x)\}_{x\in I})$ of conformal blocks to a marked projective curve $(C, I, \{\vec a(x)\}_{x\in I})$. It is this example that relates RCFT to algebraic geometry
when the space of conformal blocks was proved to be the spaces of generalized theta functions on moduli spaces of parabolic $G$-bundles (
${\rm Lie}(G)=\mathfrak{g}$) (cf. \cite{BL}, \cite{Pa} and \cite{Fa}). Then the characters of its fusion ring are determined in terms of representations
of $\mathfrak{g}$  (cf. \cite{Be2} for $\mathfrak{g}=\mathfrak{sl}(r),\,\mathfrak{sp}(r)$ and \cite{Fa} for all classical algebras). Thus an explicit formula
(Verlinde formula) for the dimension of spaces of generalized theta functions is proved. This kind of proof was called \textit{infinite dimensional proof} in \cite{Be}.

It is natural to ask if associating ${\rm H}^0(\sU_{C,\,\omega},\Theta_{\sU_{C,\,\omega}})$ to a marked projective curve $(C, \omega)$,
where $\omega=(k,\,\{\vec n(x),\vec a(x)\}_{x\in I})$, satisfies the axioms of RCFT ? At least, one can ask if the numbers
$$D_g(r,d,\omega)=dim {\rm H}^0(\sU_{C,\,\omega},\Theta_{\sU_{C,\,\omega}})$$
satisfy the factorization rules ? so that an explicit formula of $D_g(r,d,\omega)$ can be proved without using conformal blocks. The idea is to consider a family
$\{C_t\}_{t\in \Delta}$ of curves degenerating to a curve $C_0$ with exactly one node. A factorization theorem of
${\rm H}^0(\sU_{C_0,\,\omega_0},\Theta_{\sU_{C_0,\,\omega_0}})$ for irreducible $C_0$ was proved in \cite{Su1} (see also \cite{NR} for $r=2$), and for reducible $C_0$ it
was proved in \cite{Su2}. To end the story, one has to show that $dim {\rm H}^0(\sU_{C_t,\,\omega_t},\Theta_{\sU_{C_t,\,\omega_t}})$ is independent of $t\in\Delta$,
which follows clearly that ${\rm H}^1(\sU_{C_t,\,\omega_t},\Theta_{\sU_{C_t,\,\omega_t}})=0$. When $C_0$ is irreducible, this kind of vanishing theorems
were proved in \cite{Su1} (see also \cite{NR} for $r=2$) under assumption that $g\ge 3$. Although we have shown in \cite{Su3} that $dim {\rm H}^0(\sU_{C_t,\,\omega_t},\Theta_{\sU_{C_t,\,\omega_t}})$ is constant for $t\neq 0$ without using vanishing theorem (cf. Corollary 4.8 of \cite{Su3}), the vanishing theorems
for singular curves $C_0$ are needed in order to show that
$$dim {\rm H}^0(\sU_{C_t,\,\omega_t},\Theta_{\sU_{C_t,\,\omega_t}})=dim {\rm H}^0(\sU_{C_0,\,\omega_0},\Theta_{\sU_{C_0,\,\omega_0}})\quad (\forall\,\,t\in\Delta).$$
Then, by Theorem \ref{thm1.3}, we have the recurrence relations

\begin{cor}[See Theorem \ref{thm4.6} and Theorem \ref{thm4.12}]\label{cor1.4} For partitions $g=g_1+g_2$ and $I=I_1\cup I_2$, let
$$W_k=\{\,\lambda=(\lambda_1,...,\lambda_r)\,|\, 0=\lambda_r\le\lambda_{r-1}\le\cdots\le\lambda_1\le k\,\}$$
$$W'_k=\left\{\,\lambda\in W_k\,\,\mid\,\,\left(\sum_{x\in
I_1}\sum^{l_x}_{i=1}d_i(x)r_i(x)+\sum^r_{i=1}\lambda_i\right) \equiv 0({\rm mod}\,\,r)\right\}.$$
Then we have the following recurrence relation
\ga{1.2} {D_g(r,d,\omega)=\sum_{\lambda\in W'_k} D_{g_1}(r, 0,\omega_1^{\lambda})\cdot D_{g_2}(r,d,\omega_2^{\lambda}),}
\ga{1.3}{D_g(r,d,\omega)=\sum_{\mu}D_{g-1}(r,d,\omega^{\mu})}
where $\mu=(\mu_1,\cdots,\mu_r)$ runs through $0\le\mu_r\le\cdots\le
\mu_1< k$ and $\omega^{\mu}$, $\omega_1^{\lambda}$, $\omega_2^{\lambda}$ are explicitly determined by $\mu$ and $\lambda$.
\end{cor}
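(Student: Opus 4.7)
\medskip

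\noindent\textbf{Proof plan for Corollary \ref{cor1.4}.}
The strategy is a standard degeneration argument that combines the factorization theorems of \cite{Su1} and \cite{Su2} with Theorem \ref{thm1.3} via upper semicontinuity of cohomology in flat families. First I would choose a one-parameter family of marked curves $\pi\colon \sC\to\Delta$ over a small disk, with smooth fibers $C_t$ of genus $g$ carrying the parabolic data $\omega$ for $t\neq 0$, and with central fiber $C_0$ exhibiting the relevant degeneration (one node). This produces a flat family $\sU\to\Delta$ of moduli spaces with fibers $\sU_{C_t,\omega_t}$, together with a relative theta line bundle $\Theta$ restricting to $\Theta_{\sU_{C_t,\omega_t}}$ on each fiber.

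For \eqref{1.2} I would take $C_0=C_1\cup_p C_2$ to be the nodal union of smooth curves $C_i$ of genus $g_i$ ($g=g_1+g_2$), with the marked points distributed by $I=I_1\cup I_2$. The reducible-curve factorization theorem of \cite{Su2} gives
\[
\dim H^0(\sU_{C_0,\omega_0},\Theta_0)\;=\;\sum_{\lambda\in W'_k}D_{g_1}(r,0,\omega_1^{\lambda})\cdot D_{g_2}(r,d,\omega_2^{\lambda}),
\]
where $\omega_i^{\lambda}$ are the parabolic data on $C_i$ obtained by adjoining the branch point above $p$ with the weights encoded by $\lambda$. Theorem \ref{thm1.3} (reducible case) then gives $H^1(\sU_{C_0,\omega_0},\Theta_0)=0$, so upper semicontinuity forces both $h^1(\Theta_t)$ and $h^0(\Theta_t)$ to be independent of $t\in\Delta$. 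Hence $D_g(r,d,\omega)=\dim H^0(\sU_{C_0,\omega_0},\Theta_0)$, which is \eqref{1.2}. The relation \eqref{1.3} is proved by the same scheme, now degenerating to an irreducible nodal curve $C_0$ whose normalization has genus $g-1$: the factorization theorem of \cite{Su1} yields a sum over partitions $\mu$ with $0\le\mu_r\le\cdots\le\mu_1<k$ (recording the weights at the two preimages of the node), and the irreducible case of Theorem \ref{thm1.3} supplies the $H^1$ vanishing needed for semicontinuity.

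The essential ingredient, and what took most of the paper, is the vanishing Theorem \ref{thm1.3} on the singular central fiber; once that is granted, the remaining work is the deformation-theoretic routine of constructing the flat family $\sU\to\Delta$ of parabolic moduli spaces with a relative theta bundle and applying cohomology and base change. The only point one should verify carefully is that $\sU\to\Delta$ is indeed flat with central fiber canonically isomorphic to the relevant $\sU_{C_0,\omega_0}$ and that $\Theta$ specializes to $\Theta_{\sU_{C_0,\omega_0}}$; both are standard for the GIT/parabolic moduli constructions involved, but must be cited (as is done in \cite{Su1}, \cite{Su2}, \cite{Su3}).
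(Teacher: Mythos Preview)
Your argument for \eqref{1.3} is exactly what the paper does: degenerate to an irreducible one-nodal curve, apply the factorization theorem of \cite{Su1}, and use the vanishing of $H^1$ on the central fiber (Theorem \ref{thm1.3}) together with semicontinuity to transport the formula to the smooth fiber. This part is fine.

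For \eqref{1.2}, however, there is a genuine gap. The factorization theorem of \cite{Su2} does \emph{not} directly yield a sum over $W'_k$ with fixed degrees $(0,d)$ on the two components. What it gives (see Theorem \ref{thm4.6}, formula \eqref{4.4}) is
\[
D_g(r,d,\omega)=\sum_{\mu\in P_k}D_{g_1}(r,d_1^{\mu},\omega_1^{\mu})\cdot D_{g_2}(r,d_2^{\mu},\omega_2^{\mu}),
\]
where the summation is over $P_k=\{0\le\mu_r\le\cdots\le\mu_1<k\}$ and the degrees $d_1^{\mu},d_2^{\mu}$ (with $d_1^{\mu}+d_2^{\mu}=d$) depend on $\mu$ through an explicit formula involving the polarization and $|\mu|$. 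In particular $d_1^{\mu}$ is generally neither zero nor even an integer for all $\mu$; only those $\mu$ in the subset $Q_k\subset P_k$ with $d_1^{\mu}\in\mathbb{Z}$ contribute.

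To pass from this to \eqref{1.2} the paper inserts a nontrivial additional step (Theorem \ref{thm4.12}): one shows, via Hecke transformations at the new parabolic point (Lemmas \ref{lem4.8}--\ref{lem4.10}), that each $D_{g_1}(r,d_1^{\mu},\omega_1^{\mu})$ can be rewritten as $D_{g_1}(r,0,\omega_1^{\phi(\mu)})$ for a suitable shift $\phi(\mu)$ of the parabolic data, and similarly on the second factor. One then checks that the resulting map $\phi\colon Q_k\to W'_k$ is a bijection, which reindexes the sum into the form \eqref{1.2}. Your proposal skips this Hecke-transformation/reindexing argument entirely, so as written it does not prove \eqref{1.2}.
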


We describe briefly content of the article. In Section 2, we collect notions and properties of globally $F$-regular type varieties, in particular,
we formulate and prove Proposition \ref{prop2.10}, which is our technical tool to show globally $F$-regular type of GIT quotients. In Section 3, we prove Theorem \ref{thm1.1}
and Theorem \ref{thm1.2}.  In Section 4, we prove Theorem \ref{thm1.3} and Corollary \ref{cor1.4} where recurrence relation \eqref{1.3} follows Theorem \ref{thm1.3} and
the factorization theorem in \cite{Su1}. But recurrence relation \eqref{1.2} is obtained by using factorization theorem in \cite{Su2} and Hecke transformation.
In Section 5, by using of recurrence relations \eqref{1.2} and \eqref{1.3}, we are able to check Verlinde formula (except $g=0$, $d=0$ and $|I|=3$):
$$\aligned &D_g(r,d,\omega)=(-1)^{d(r-1)}\left(\frac{k}{r}\right)^g(r(r+k)^{r-1})^{g-1}\\&\sum_{\vec v}
\frac{{\rm exp}\left(2\pi i\left(\frac{d}{r}-\frac{|\omega|}{r(r+k)}\right)\sum_{i=1}^{r}v_i\right)S_{\omega}\left({\rm exp}\,2\pi i\frac{\vec v}{r+k}\right)} {\prod_{i<j}\left(2\sin\,\pi \frac{v_i-v_j}{r+k}\right)^{2(g-1)}}\endaligned$$
where $\vec v=(v_1,v_2,\ldots,v_r)$ runs through the integers $$0=v_r<\cdots <v_2<v_1<r+k.$$\\[.2cm]
{\it Acknowledegements:} We would like to thank C. S. Seshadri for a number of emails of discussions about our Lemma \ref{lem2.9}, and
T. R. Ramadas for suggesting some helpful references. The discussions of globally $F$-regular type varieties with
K. Schwede and K. E. Smith (by emails) are also helpful, we thank them very much.

\section{Globally F-regular varieties}

The main result in this section is Proposition \ref{prop2.10}, which will be a key technical tool for us to prove globally F-regular type of GIT quotients. We first collect some notions and facts of globally F-regular varieties over a perfect field $k$ of positive characteristic and recall the definition of
globally F-regular type of varieties over a field of characteristic zero. Our main references here are \cite{Br}, \cite{Sc} and \cite{Sm}.

Let $X$ be a variety over a perfect field $k$ of $char (k)=p>0$, $$F:X\to X$$
be the Frobenius map and $F^e:X\to X$
be the e-th iterate of Frobenius map. When $X$ is normal, for any (weil) divisor $D\in Div(X)$,
$$\sO_X(D)(V)=\{\,f\in K(X)\,|\, div_V(f)+D|_V\ge 0\,\},\quad \forall\,\,V\subset X$$
is a reflexive subsheaf of constant sheaf $K=K(X)$. In fact, we have
$$\sO_X(D)=j_*\sO_{X^{sm.}}(D)$$
where $j:X^{sm.}\hookrightarrow X$ is the open set of smooth points, and $\sO_X(D)$ is an invertible sheaf
if and only if $D$ is a Cartier divisor.

\begin{defn}\label{defn2.1} A normal variety $X$ over a perfect field is called
\emph{stably Frobenius $D$-split} if $\sO_X\to F^e_*\sO_X(D)$ is split for some $e>0$.
$X$ is called \emph{globally F-regular} if $X$ is stably Frobenius $D$-split
for any effective divisor $D$.
\end{defn}

The advantage of this definition is that any open set $U\subset X$ of a globally F-regular variety $X$ is globally F-regular. Its disadvantage
is the requirement of normality of $X$. When $X$ is not normal, one possible remedy of Definition \ref{defn2.1} is to require that $D$ is a Cartier divisor. Then it loses the
advantage that any open set $U\subset X$ is globally F-regular since a Cartier divisor on $U$
may not be extended to a Cartier divisor on $X$. On the other hand, when $X$ is a projective variety and is stably Frobenius $D$-split for any effective Cartier $D$, then $X$ must be
normal and Cohen-Macaulay according to K. E. Smith (Theorem 3.10 and Theorem 4.1 of \cite{Sm}).
When $X$ is a projective variety, we recall the following proposition (due to K. E. Smith).

\begin{prop}[Theorem 3.10 of \cite{Sm}]\label{prop2.2} Let $X$ be a projective variety over a perfect field. Then the following statements are equivalent.
\begin{itemize} \item [(1)] $X$ is normal and is stably Frobenius $D$-split for any effective $D$;
\item [(2)] $X$ is stably Frobenius $D$-split for any effective Cartier $D$;
\item [(3)] For any ample line bundle $\sL$, the section ring of $X$
$$R(X,\sL)=\bigoplus_{n=0}^{\infty}H^0(X,\sL^n)$$
is strongly F-regular.
\end{itemize}
\end{prop}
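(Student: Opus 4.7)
The plan is to establish $(1)\Rightarrow(2)\Rightarrow(1)$ and then $(1)\Leftrightarrow(3)$, treating the normality assertion in $(2)\Rightarrow(1)$ as the one deep input to be cited from \cite{Sm} rather than reproved.

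The implication $(1)\Rightarrow(2)$ is immediate, since on a normal variety every effective Cartier divisor is in particular an effective Weil divisor, so the splitting demanded by (2) is a special case of that in (1). For $(2)\Rightarrow(1)$, normality (in fact Cohen--Macaulayness) is the content of K. E. Smith's Theorem 4.1 of \cite{Sm}, which I would simply quote. Granting normality, the remaining task is to upgrade the splitting from effective Cartier divisors to arbitrary effective Weil divisors. The key geometric input is that on a projective normal variety every effective Weil divisor $D$ is dominated by an effective Cartier divisor $D'$: fix an ample Cartier divisor $H$ on $X$, and for $n\gg 0$ pick a nonzero $f\in H^0(X,\sO_X(nH-D))$. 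The divisor $D':=\mathrm{div}(f)+nH$ is effective, satisfies $D'\ge D$, and is Cartier since it is linearly equivalent to the Cartier divisor $nH$ (principal divisors being automatically Cartier). The resulting inclusion $\sO_X(D)\hookrightarrow\sO_X(D')$ gives a factorization $\sO_X\to F^e_*\sO_X(D)\hookrightarrow F^e_*\sO_X(D')$, and since (2) splits the composite, it splits the first arrow as well.

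For $(1)\Leftrightarrow(3)$, the approach is the standard dictionary between Frobenius splittings on $X$ and on the section ring $R=R(X,\sL)$, viewing $X=\mathrm{Proj}\,R$. Under this dictionary, a nonzero homogeneous element $c\in R_d$ corresponds to an effective Cartier divisor $D_c=\mathrm{div}(c)$ on $X$, and an $R$-linear splitting of the map $R\to F^e_*R$ sending $1\mapsto c$ translates, after sheafification and passage to suitable graded pieces, into a splitting of $\sO_X\to F^e_*\sO_X(D_c)$; the reverse direction assembles degree by degree. Since a graded ring is strongly $F$-regular iff the splitting condition holds at every nonzero homogeneous element (because homogeneous elements generate), the equivalence with stable Frobenius $D$-splitting for every effective Cartier $D$ follows. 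Independence of the choice of ample $\sL$ is a consequence of the fact that ample line bundles are cofinal among those large enough to carry a chosen Cartier divisor, so that the class of splittings one obtains does not depend on the polarization.

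The main obstacle is the bookkeeping in the last step: one must carefully track the twist introduced by $F^e$ (which multiplies degrees by $p^e$), ensure that the splitting of $\sO_X\to F^e_*\sO_X(D_c)$ assembles, after taking global sections of all $\sL^n$, into an honest graded $R$-linear map splitting $1\mapsto c$, and conversely that a graded splitting sheafifies correctly. Once this dictionary is set up cleanly, the three implications drop out without further difficulty, with Smith's normality theorem supplying the only genuinely nontrivial external input.
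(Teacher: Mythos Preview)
Your argument is correct, but the route differs from the paper's. The paper argues cyclically $(1)\Rightarrow(2)\Rightarrow(3)\Rightarrow(1)$: the first implication is trivial, the second is cited wholesale from Theorem~3.10 of \cite{Sm}, and only $(3)\Rightarrow(1)$ is worked out. For that step the paper invokes Theorem~4.1 of \cite{Sm} to get normality, then restricts to the smooth locus $X^{sm.}$, where every Weil divisor becomes Cartier and $R(X,\sL)=R(X^{sm.},\sL)$ since the complement has codimension $\ge 2$; Smith's argument applied on $X^{sm.}$ then gives the splitting there, which extends to $X$ by reflexivity. By contrast, you prove $(2)\Rightarrow(1)$ directly, without passing through $(3)$, via the elementary trick of dominating an arbitrary effective Weil divisor by an effective Cartier one using an ample $H$; this is clean and self-contained. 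The price is that you must then redo the section-ring/Frobenius-splitting dictionary to get $(1)\Leftrightarrow(3)$, whereas the paper simply cites Smith for that part. In short: your $(2)\Rightarrow(1)$ is more elementary than the paper's $(3)\Rightarrow(1)$, but the paper's overall proof is shorter because it outsources the graded-ring bookkeeping entirely.
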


\begin{proof} It is clear that $(1)\Rightarrow (2)$, and $(2)\Rightarrow (3)$ is proved in Theorem 3.10 of \cite{Sm}. That $(3)\Rightarrow (1)$ is a modification of the proof
in \cite{Sm}. By Theorem 4.1 of \cite{Sm}, $X$ is normal and Cohen-Macaulay.  Let $X^{sm.}\subset X$ be the open set of smooth points, then $R(X,\sL)=R(X^{sm.},\sL)$ and, for any effective $D\in Div(X)$,
$D\cap X^{sm.}$ is an effective Cartier  divisor on $X^{sm.}$. Then the proof of $(1)\Rightarrow (3)$ in Theorem 3.10 of \cite{Sm} implies
that $X^{sm.}$ is stably Frobenius $D\cap X^{sm.}$-split, which implies that $X$ is stably Frobenius $D$-split.
\end{proof}

A variety $X$ is called \emph{Frobenius split} if $\sO_X\to F_*\sO_X$ is split. In particular,
any \emph{globally F-regular} variety is Frobenius split. Let $X\xrightarrow{f}Y$ be a morphism and $f_*\sO_X=\sO_Y$, then any splitting map
$F_*\sO_X\xrightarrow{\psi}\sO_X$ of $\sO_X\to F_*\sO_X$ induces a splitting map $F_*\sO_Y=F_*f_*\sO_X=f_*F_*\sO_X\xrightarrow{f_*\psi} f_*\sO_X=\sO_Y$. There is a generalization
of above useful observation.

\begin{lem}[Corollary 6.4 of \cite{Sc}]\label{lem2.3} Let $f:X\to Y$ be a morphism of varieties over a perfect field $k$ of $char(k)=p>0$.
If the natural map $\sO_Y\xrightarrow{i} f_*\sO_X$ splits and $X$ is globally F-regular, then $Y$ is stably Frobenius $D$-split for any effective Cartier divisor $D$, and
it is globally F-regular when $Y$ is normal.
\end{lem}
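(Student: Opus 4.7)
The plan is to reduce everything to the global $F$-regularity of $X$: lift any effective Cartier divisor $D$ on $Y$ to the effective Cartier divisor $f^*D$ on $X$, split the corresponding Frobenius inclusion there, and then push the splitting down along $f$ while contracting $f_*\sO_X$ back to $\sO_Y$ via the given retraction $r:f_*\sO_X\to\sO_Y$ of $i$. Concretely, the global $F$-regularity of $X$ supplies an integer $e>0$ and a splitting $\sigma:F^e_*\sO_X(f^*D)\to\sO_X$ of the natural inclusion. Applying $f_*$, using $f_*F^e_*=F^e_*f_*$, and invoking the projection formula (legitimate because $D$ is Cartier), I would identify
\[
f_*F^e_*\sO_X(f^*D)\;\cong\;F^e_*\bigl(f_*\sO_X\otimes_{\sO_Y}\sO_Y(D)\bigr).
\]
Tensoring $i$ with $\sO_Y(D)$ and pushing by $F^e_*$ then produces a natural map $F^e_*\sO_Y(D)\to F^e_*(f_*\sO_X\otimes\sO_Y(D))$, and the candidate retraction is the composite
\[
F^e_*\sO_Y(D)\xrightarrow{F^e_*(i\otimes 1)}F^e_*(f_*\sO_X\otimes\sO_Y(D))\xrightarrow{f_*\sigma}f_*\sO_X\xrightarrow{r}\sO_Y.
\]

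The key verification is that precomposition of this map with the Frobenius inclusion $\sO_Y\hookrightarrow F^e_*\sO_Y(D)$ gives $1_{\sO_Y}$. By naturality of the Frobenius inclusion with respect to $i$, together with the compatibility built into the projection formula, this precomposition factors through
\[
\sO_Y\xrightarrow{i}f_*\sO_X\xrightarrow{f_*\mathrm{incl}}f_*F^e_*\sO_X(f^*D)\xrightarrow{f_*\sigma}f_*\sO_X\xrightarrow{r}\sO_Y,
\]
which telescopes to $r\circ i=1_{\sO_Y}$ since $\sigma$ splits the inclusion on $X$. The main obstacle is purely bookkeeping: keeping these compatibility isomorphisms (Frobenius commutes with pushforward, the projection formula, naturality of the inclusion $\sO\hookrightarrow F^e_*\sO(\cdot)$) straight, so that the two a priori different paths from $\sO_Y$ to $f_*\sO_X$ really do agree. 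Once that is laid out, the argument is formal.

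Finally, when $Y$ is normal the stable Frobenius $D$-splitting just established for every effective Cartier divisor $D$ upgrades to global $F$-regularity by Proposition \ref{prop2.2}, completing the proof.
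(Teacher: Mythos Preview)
Your construction of the splitting is essentially identical to the paper's: both pull back $D$ to $f^*D$, use the given splitting $\sigma$ on $X$, push forward, identify $f_*\sO_X(f^*D)\cong f_*\sO_X\otimes\sO_Y(D)$ via the projection formula, and then contract via the retraction of $i$. Your discussion of the naturality check matches the paper's implicit verification.

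There is one gap in your final step. You invoke Proposition~\ref{prop2.2} to pass from ``stably Frobenius $D$-split for every effective Cartier $D$'' to ``globally $F$-regular,'' but that proposition is stated only for \emph{projective} varieties, and Lemma~\ref{lem2.3} does not assume $Y$ is projective. The paper handles this differently: it restricts to the smooth locus $Y_0\subset Y$, where every Weil divisor is Cartier, and reruns the same splitting argument for $f^{-1}(Y_0)\to Y_0$. Since $Y$ is normal, $Y\setminus Y_0$ has codimension $\ge 2$, so $\sO_Y(D)=j_*\sO_{Y_0}(D|_{Y_0})$ for any Weil divisor $D$, and the splitting on $Y_0$ extends to $Y$. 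This avoids any projectivity hypothesis. Your argument is fine whenever $Y$ is projective (which covers the applications later in the paper), but to match the generality of the lemma you should replace the appeal to Proposition~\ref{prop2.2} with this restriction-to-$Y_0$ step.
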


\begin{proof} For any Cartier divisor $D\in Div(Y)$ defined by a section $s\in\Gamma(Y,\sO_Y(D))$, let $H=f^*D$ and $F_*^e\sO_X(H)\xrightarrow{h}\sO_X$ be a splitting of
$\sO_X\to F^e_*\sO_X\xrightarrow{F^e_*f^*(s)} F_*^e\sO_X(H)$, and
 $f_*\sO_X\xrightarrow{j}\sO_Y$ be a splitting of $\sO_Y\xrightarrow{i} f_*\sO_X$.
Then $\sO_Y(D)\xrightarrow{1\otimes i}\sO_Y(D)\otimes f_*\sO_X=f_*\sO_X(H)$ induces
$$F^e_*\sO_Y(D)\xrightarrow{F^e_*1\otimes i}F^e_*f_*\sO_X(H)=f_*F^e_*\sO_X(H)\xrightarrow{f_*h}f_*\sO_X\xrightarrow{j}\sO_Y$$
is a splitting of $\sO_Y\to F^e_*\sO_Y\xrightarrow{F^e_*s} F^e_*\sO_Y(D)$. When $Y$ is normal, let $Y_0\subset Y$ be the open set of smooth points, $Y$ is globally F-regular if and only if $Y_0$ is stably
Frobenius $D$-split for any effective Cartier divisor $D\in Div(Y_0)$, which is true by applying above argument to $f^{-1}(Y_0)\xrightarrow{f} Y_0$.
\end{proof}

For any scheme $X$ of finite type over a field $K$ of
characteristic zero, there is a
finitely generated $\mathbb{Z}$-algebra $A\subset K$ and an $A$-flat
scheme $$X_A\to S={\rm Spec}(A)$$ such that $X_K=X_A\times_S{\rm
Spec}(K)\cong X$. $X_A\to S={\rm Spec}(A)$ is called an integral model of $X/K$, and a closed fiber
$X_s=X_A\times_S{\rm Spec}(\overline{k(s)})$ is
called "\textbf{modulo $p$ reduction of $X$}" where $p={\rm
char}(k(s))>0$.

\begin{defn}\label{defn2.4} A variety $X$ over a field of characteristic zero is said to be of \emph{globally F-regular type} (resp.\emph{ F-split type}) if its "\textbf{modulo $p$ reduction of $X$}"  are globally F-regular (resp. \emph{F-split })
for a dense set of $p$.
\end{defn}

An equivalent definition of \emph{globally F-regular type} for a projective variety $X$ is that its modulo $p$ reductions (for a dense set of $p$) are stably Frobenius $D$-split along any effective Cartier divisor $D$, which do not require normality of its modulo $p$ reductions prior to the definition.
Projective varieties of \emph{globally F-regular type} have many nice properties and a good vanishing theorem of cohomology.

\begin{thm}[Corollary 5.3 and Corollary 5.5 of \cite{Sm}]\label{thm2.5} Let $X$ be a projective variety over a field of characteristic zero. If $X$ is of globally F-regular type,
then we have \begin{itemize} \item [(1)] $X$ is normal, Cohen-Macaulay with rational singularities. If $X$ is $\mathbb{Q}$-Gorenstein, then $X$ has log terminal singularities.
\item [(2)] For any nef line bundle $\sL$ on $X$, we have $H^i(X,\sL)=0$ when $i>0$. In particular, $H^i(X,\sO_X)=0$ whenever $i>0$.
\end{itemize}
\end{thm}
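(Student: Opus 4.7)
The plan is to verify each statement at the level of a dense set of closed fibers of an integral model $X_A\to\Spec A$, using the characteristic-$p$ theory of globally F-regular varieties, and then to pull the result back to $X$ in characteristic zero via flat base change and semicontinuity. The two parts rest on quite different inputs: part $(2)$ needs a direct vanishing theorem for nef line bundles on globally F-regular varieties in characteristic $p$; part $(1)$ needs the dictionaries between F-singularities and MMP-type singularities due to Smith, Hara, Mehta--Srinivas, and Hara--Watanabe.

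For $(2)$ the characteristic-$p$ core is the following. Let $Y$ be a projective globally F-regular variety over a perfect field of $\text{char}=p>0$ and let $\sM$ be a nef line bundle on $Y$. Choose an ample effective Cartier divisor $D$; by the definition of global F-regularity the map $\sO_Y\to F^e_*\sO_Y(D)$ splits for some $e>0$. Tensoring with $\sM$ and using the projection formula yields a split injection
$$\sM\hookrightarrow F^e_*\bigl(\sM^{p^e}\otimes\sO_Y(D)\bigr),$$
hence a cohomological injection $H^i(Y,\sM)\hookrightarrow H^i\bigl(Y,\sM^{p^e}\otimes\sO_Y(D)\bigr)$. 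The twist $\sN:=\sM^{p^e}\otimes\sO_Y(D)$ is ample, since nef plus ample is ample. Because $Y$ is in particular Frobenius split, iterating the split map $\sN\to F_*\sN^p$ produces injections $H^i(Y,\sN)\hookrightarrow H^i(Y,\sN^{p^f})$ for every $f\ge 0$; Serre vanishing on the right kills both sides for $i>0$. Applied to a modulo-$p$ reduction $(X_p,\sL_p)$ of $(X,\sL)$, this gives $H^i(X_p,\sL_p)=0$ for a dense set of $p$, whence $H^i(X,\sL)=0$ by flat base change and upper semicontinuity of $h^i$ on fibers of $X_A\to\Spec A$.

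For $(1)$ I would first apply Proposition \ref{prop2.2} to conclude that each mod-$p$ reduction $X_p$ is normal and Cohen--Macaulay; since both properties are open and constructible in flat families, they spread to the generic fiber $X$. The stalks of a globally F-regular variety are strongly F-regular, hence F-rational, so the Smith/Hara/Mehta--Srinivas characterization of rational singularities via F-rationality of a dense set of reductions forces $X$ to have rational singularities. In the $\mathbb{Q}$-Gorenstein case the Hara--Watanabe theorem identifies log terminal singularities with strong F-regularity of a dense set of reductions, and this is exactly what the globally F-regular type hypothesis provides. The main obstacle is not the formal reduction-modulo-$p$ bookkeeping, which is routine, but rather these deep equivalences between F-singularities and MMP-type singularities, combined with the nef-to-ample splitting trick that converts the abstract definition of global F-regularity into a genuine vanishing theorem.
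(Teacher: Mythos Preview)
The paper does not supply its own proof of Theorem~\ref{thm2.5}; the result is quoted verbatim from Smith \cite{Sm}, Corollaries~5.3 and~5.5, and used as a black box. Your sketch is essentially a reconstruction of Smith's original arguments: the splitting trick $\sM\hookrightarrow F^e_*(\sM^{p^e}(D))$ combined with Serre vanishing for part~(2), and the Hara/Smith/Mehta--Srinivas and Hara--Watanabe dictionaries for part~(1), are exactly the ingredients Smith uses.

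One technical point you pass over deserves a flag: in part~(2) you assert that the reduction $\sL_p$ is nef on $X_p$ for a dense set of $p$, but nefness is not known to spread from the generic fibre to closed fibres in any naive way (it amounts to a countable intersection of open ampleness conditions, which need not contain closed points). In Smith's paper this is handled, and for the present purpose it is enough to observe that by upper semicontinuity one only needs \emph{one} closed point $s$ with $h^i(X_s,\sL_s)=0$; since $\sL\otimes H$ is ample and ampleness does spread, one can arrange the iterated splitting so that the target line bundle is a power of $(\sL\otimes H)_s$ times a further power of $H_s$, which is visibly ample. This is a bookkeeping issue rather than a conceptual gap, but a full write-up should address it.
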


A normal projective variety $X$ is called a \emph{Fano variety} if
$$\omega_X^{-1}=\sH om_{\sO_X}(\omega_X,\sO_X)$$ is an ample line bundle. One of important examples of \emph{globally F-regular type} varieties is

\begin{prop}\label{prop2.6} (\cite[Proposition~6.3]{Sm})  A Fano variety (over a field of characteristic zero) with at most rational singularities is of globally F-regular type.
\end{prop}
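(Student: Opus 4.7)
The plan is to use the section-ring characterization in Proposition~\ref{prop2.2}(3): for a projective variety in positive characteristic, global $F$-regularity is equivalent to strong $F$-regularity of its section ring $R(X,\sL)=\bigoplus_{n\geq 0}H^0(X,\sL^n)$ for some (equivalently, every) ample line bundle $\sL$. Since $X$ is Fano, the natural choice is $\sL=\omega_X^{-1}$, so I would work with the anti-canonical ring $S=R(X,\omega_X^{-1})$, whose $\text{Proj}$ recovers $X$.

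The first step is to reduce to a local statement at the vertex of the affine cone $Y=\Spec S$. Because $S$ is a positively graded, finitely generated $k$-algebra, strong $F$-regularity of $S$ is equivalent to strong $F$-regularity of the localization $S_{\mathfrak{m}}$ at the irrelevant maximal ideal $\mathfrak{m}$: strong $F$-regularity localizes, and every prime of $S$ is contained in $\mathfrak{m}$ after homogenization. So it suffices to show that $(Y,v)$ has strongly $F$-regular reduction modulo $p$ for a dense set of $p$.

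The second step is to identify the singularity $(Y,v)$ as Kawamata log terminal (klt). Since $\omega_X^{-1}$ is a line bundle, $X$ is Gorenstein, and together with the hypothesis of rational singularities this gives that $X$ has canonical singularities (which, for Gorenstein varieties with integer discrepancies, coincides with klt in the sense of trivial boundary). The affine cone over a klt Fano variety polarized by its anti-canonical class is klt at the vertex by the standard cone criterion (see Koll\'ar--Mori), because the anti-log-canonical divisor of the cone pulls back to a positive multiple of $-K_X$. Hence $(Y,v)$ is klt.

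The final step is to invoke the Hara--Watanabe theorem: a klt singularity in characteristic zero has strongly $F$-regular modulo $p$ reduction for a dense set of primes $p$. Applying this to $(Y,v)$ shows that $S_p$ is strongly $F$-regular at the irrelevant maximal ideal, hence strongly $F$-regular, for a dense set of $p$; Proposition~\ref{prop2.2} then gives that $X_p$ is globally $F$-regular for that dense set, which is globally $F$-regular type by Definition~\ref{defn2.4}. The main obstacle is the Hara--Watanabe correspondence between klt singularities and strongly $F$-regular reductions, whose proof rests on Kawamata--Viehweg vanishing in characteristic zero and delicate tight-closure/multiplier-ideal comparisons. A secondary technicality is choosing an integral model $X_A\to \Spec A$ so that the Fano condition, the rational singularities condition, and the klt property of the cone $Y_A$ all spread out over a dense open of $\Spec A$, ensuring the Hara--Watanabe conclusion applies uniformly in $p$.
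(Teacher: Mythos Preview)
The paper does not give a proof of this proposition; it is simply quoted from \cite[Proposition~6.3]{Sm} as an established result, so there is no argument in the paper to compare against. Your sketch is essentially the argument in Smith's paper: pass to the anticanonical section ring via Proposition~\ref{prop2.2}(3), identify the cone vertex as log terminal (Gorenstein plus rational singularities gives canonical singularities, and the affine cone over a Fano polarized by $-K_X$ is klt at the vertex), and then apply the theorem that log terminal singularities in characteristic zero have strongly $F$-regular reduction for a dense set of primes. One small attribution point: the implication ``klt $\Rightarrow$ strongly $F$-regular type'' that you need is due to Hara (and, in the form used by Smith, to Hara and Mehta--Srinivas); the Hara--Watanabe paper establishes the converse direction. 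This does not affect the correctness of your outline.
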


We will provide in this article some other examples of \emph{globally F-regular type} varieties $Y$, which will be moduli spaces of semi-stable parabolic bundles and moduli spaces of semi-stable parabolic generalized parabolic sheaves. We will construct an open set $X$ of a Fano variety with a morphism $f:X\to Y$ such that $f_*\sO_X=\sO_Y$. It is known by definition and Proposition \ref{prop2.6} that $X$ is of \emph{globally F-regular type}. The following characteristic zero analogy of Lemma \ref{lem2.3} is natural.

\begin{question}\label{question2.7} Let $X\xrightarrow{f} Y$ be a morphism of varieties over a field $K$ of ${\rm char}(K)=0$ such that $\sO_Y\to f_*\sO_X$ is split and $X$ is of
\emph{globally F-regular type}. Is $Y$ a variety of \emph{globally F-regular type} ?
\end{question}

Let $f_*\sO_X\xrightarrow{\beta}\sO_Y$ be a splitting of $\sO_Y\to f_*\sO_X$. Then Question \ref{question2.7} consists: (1)
Can we choose a model $f_A:X_A\to Y_A$ of $f:X\to Y$ such that the $\sO_Y$-homomorphism  $(f_{A_*}\sO_{X_A})\otimes_AK\xrightarrow{\beta}\sO_{Y_A}\otimes_AK$ can be extended to
$f_{A_*}\sO_{X_A}\xrightarrow{\beta_A}\sO_{Y_A}$ ? (2) Is there a dense set of closed point ${\rm Spec}(\overline{k(s)})\to S={\rm Spec}(A)$ such that
$i^*_sf_{A*}\sO_{X_A}=f_{s*}j^*_s\sO_{X_A}$ ?
where $Y_s=Y_A\times_A\overline{k(s)}\xrightarrow{i_s} Y_A$, $X_s=X_A\times_A\overline{k(s)}\xrightarrow{j_s} X_A$ and
$$\xymatrix{
  X_s \ar[d]_{f_s} \ar[r]^{j_s}
                &  X_A\ar[d]^{f_A}  \\
  Y_s \ar[r]^{i_s}
                & Y_A    .}$$

\begin{defn}\label{defn2.8} A morphism $X\xrightarrow{f} Y$ of varieties over a field $K$ of ${\rm char}(K)=0$ is called \emph{$p$-compatible} if there is an integral model
$X_A\xrightarrow{f_A} Y_A$ such that $i^*_sf_{A*}\sO_{X_A}=f_{s*}j^*_s\sO_{X_A}$ for $s\in {\rm Spec}(A)$.
\end{defn}

It is clear that (1) has an affirmative answer when either $f_*\sO_X$ is a coherent $\sO_Y$-module or the splitting map $f_*\sO_X\xrightarrow{\beta}\sO_Y$
is a homomorphism of $\sO_Y$-algebras. (2) is true for flat morphism $f:X\to Y$ with coherent $R\,^if_*\sO_X$ ($i\ge 0$). It is also clear that any affine morphism must be $p$-compatible.
We will give another examples where $X$, $Y$ are open set of GIT quotients and $X\xrightarrow{f}Y$ is induced by a $G$-invariant $p$-compatible morphism $\sR'\xrightarrow{\hat f}\sR$ of parameter spaces. The proof of Proposition \ref{prop2.10} will need the following general observation, which has independent interest.

\begin{lem}\label{lem2.9}
Let $X\to S={\rm Spec}(A)$ be a flat projective morphism, $A$ be an integral $\mathbb{Z}$-algebra of finite type and $G\to S$ be a $S$-flat reductive group scheme
with action on $X$ over $S$. If $L$ is a relative ample line bundle on $X$ linearizing the action of $G$, let
$$X^{ss}(L)\xrightarrow{\pi} X^{ss}(L)//G:=Y$$
be the GIT quotient over $S$. Assume that the geometrically generic fiber of $X^{ss}(L)\to S$ is an irreducible normal variety.
Then there is a dense open set $U\subset S$ such that for any $s\in U$
$$Y\times_S\overline{k(s)}\cong X_s^{ss}(L_s)//G_s$$
where $X_s=X\times_S\overline{k(s)}$ (resp. $G_s=G\times_S\overline{k(s)}$) is the geomerically closed fiber of $X\to S$ (resp. $G\to S$) at ${\rm Spec}(\overline{k(s))}\to S$.
\end{lem}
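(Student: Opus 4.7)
The plan is to describe the GIT quotient as $Y = \operatorname{Proj}_S R^G$ with $R^G = \bigoplus_{n\geq 0}(\pi_*L^n)^G$, and then to show that on a dense open $U\subset S$ both the formation of $\operatorname{Proj}$ and the formation of invariants commute with base change to $\overline{k(s)}$; the identification $\operatorname{Proj}_{\overline{k(s)}}\bigoplus_n H^0(X_s,L_s^n)^{G_s} = X_s^{ss}(L_s)//G_s$ is then formal. First I would shrink $S$ so that several auxiliary conditions hold simultaneously. By uniform Serre vanishing for the $\pi$-ample $L$, there exists $N_0$ such that $R^i\pi_*L^n=0$ for all $i>0$ and $n\geq N_0$, and $\pi_*L^n$ is locally free with $(\pi_*L^n)\otimes_A\overline{k(s)}=H^0(X_s,L_s^n)$. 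By Seshadri's theorem on geometric reductivity over a Noetherian base, $R^G$ is a finitely generated graded $\sO_S$-algebra; passing to a Veronese subalgebra $R^{G,(N)}=\bigoplus_k(R^G)_{kN}$ for $N\geq N_0$ divisible enough, and shrinking $S$ again by generic flatness, I may assume $R^G$ is generated in degree $N$ and every $(R^G)_{kN}$ is locally free, so that $Y\times_S\Spec\overline{k(s)}=\operatorname{Proj}_{\overline{k(s)}}(R^{G,(N)}\otimes_A\overline{k(s)})$ automatically.

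The crux is to establish $(R^G)_{kN}\otimes_A\overline{k(s)}=H^0(X_s,L_s^{kN})^{G_s}$ for the finitely many $k$ needed to recover $\operatorname{Proj} R^{G,(N)}$. Set $V=\pi_*L^{kN}$ and consider the $G$-coaction $\rho:V\to V\otimes_A A[G]$. Because $V$ is finitely generated, $\rho(V)$ lies in $V\otimes W$ for a finitely generated $A$-submodule $W\subset A[G]$ containing $1$ and the matrix coefficients of the action; after further shrinking, both $W$ and $V\otimes W$ are locally free. With $\delta=\rho-(\,\cdot\otimes 1):V\to V\otimes W$, we have $V^G=\ker(\delta)$ and, fiberwise, $V_s^{G_s}=\ker(\delta_s)$. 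Generic flatness applied to the finitely generated $A$-module $\operatorname{coker}(\delta)$ yields a dense open on which the four-term sequence $0\to V^G\to V\to V\otimes W\to \operatorname{coker}(\delta)\to 0$ remains exact after tensoring with $\overline{k(s)}$, forcing $V^G\otimes\overline{k(s)}=V_s^{G_s}$. Intersecting the resulting finite collection of dense opens (one for each relevant $k$) produces $U$.

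On $U$, the graded rings $R^{G,(N)}\otimes\overline{k(s)}$ and $\bigoplus_k H^0(X_s,L_s^{kN})^{G_s}$ agree in every generating degree, hence as graded rings, so their $\operatorname{Proj}$'s coincide and $Y\times_S\Spec\overline{k(s)}\cong X_s^{ss}(L_s)//G_s$. The auxiliary compatibility $X^{ss}(L)\cap X_s=X_s^{ss}(L_s)$ also drops out, since any $G_s$-invariant section nonvanishing at a point of $X_s^{ss}(L_s)$ lifts to a $G$-invariant section over $U$ by the same base-change isomorphism. The main obstacle is the base-change behaviour of invariants in positive characteristic; I resolve it by presenting invariants as the kernel of a map between coherent $\sO_S$-modules via Seshadri's finite generation, to which generic flatness applies. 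The irreducibility and normality of the geometric generic fiber of $X^{ss}(L)$ ensures that $R^G_K$ genuinely realises the GIT quotient over $K=\operatorname{Frac}(A)$ and pins down the generic ranks of $(R^G)_{kN}$ used throughout.
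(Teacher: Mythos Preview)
Your strategy—showing that the invariant ring $R^G=\bigoplus_n(\pi_*L^n)^G$ commutes with base change to $\overline{k(s)}$ on a dense open via generic flatness—is natural and genuinely different from the paper's, but there is a real gap at the assertion ``fiberwise, $V_s^{G_s}=\ker(\delta_s)$''. The inclusion $\ker(\delta_s)\subset V_s^{G_s}$ is immediate, since composing $\delta_s$ with $V_s\otimes W_s\to V_s\otimes \overline{k(s)}[G_s]$ recovers the honest coaction map. The reverse inclusion, however, requires $W_s\to \overline{k(s)}[G_s]$ to be injective, i.e.\ ${\rm Tor}_1^A(A[G]/W,\overline{k(s)})=0$; since $A[G]/W$ is \emph{not} a finitely generated $A$-module, generic flatness does not apply to it, and you give no substitute argument. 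Without this step you only obtain the inclusion $(R^G_n)_s\hookrightarrow (R_{n,s})^{G_s}$ in each degree, which amounts merely to the existence of the comparison morphism $\theta_s\colon X_s^{ss}(L_s)//G_s\to Y_s$, not to its being an isomorphism. (The gap is plausibly repairable—for instance over a Dedekind base one may replace $W$ by its saturation in $A[G]$, and in general one can try to exploit further structure of the Hopf algebra $A[G]$—but it does need an argument, and this is exactly where the failure of exactness of $(\,\cdot\,)^G$ in positive characteristic is hiding.)

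The paper circumvents this entirely by a short geometric route. Seshadri's results already give $X^{ss}(L)\times_S\overline{k(s)}=X_s^{ss}(L_s)$ and that the comparison map $\theta_s$ is a bijection on closed points. The normality hypothesis on the geometric generic fibre of $X^{ss}(L)$ then forces $Y_s$ to be normal for $s$ in a dense open (EGA~IV, Th\'eor\`eme~12.2.4); and since $\mathrm{Frac}(A)$ has characteristic zero, $X^{ss}\to Y$ is generically smooth, hence so is $\theta_s$. Zariski's Main Theorem now upgrades the bijective, generically smooth $\theta_s$ onto a normal target to an isomorphism. Note that the paper uses the normality assumption in an essential way, whereas in your approach it plays no real role (your closing remark about it is not load-bearing); conversely, the paper never has to analyse how taking $G$-invariants interacts with reduction modulo~$p$.
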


\begin{proof} Let $X^{ss}(L)\times_S\overline{k(s)}\xrightarrow{\pi_s}Y_s$ be the pullback of $X^{ss}(L)\xrightarrow{\pi}Y$ under the base change
${\rm Spec}(\overline{k(s))}\to S$. By Proposition 7 of \cite{Se},
$$X^{ss}(L)\times_S\overline{k(s)}=X_s^{ss}(L_s).$$
Then there is a unique $\overline{k(s)}$-morphism $X^{ss}_s(L_s)//G_s\xrightarrow{\theta} Y_s$ such that
$$\xymatrix{
   X^{ss}_s(L_s) \ar[dr]^{\pi_s} \ar[r]^{}
                & X_s^{ss}(L_s)//G_s \ar[d]^{\theta}  \\
                & Y_s  }$$
is commutative. Let $\overline{Y_s}:=X^{ss}_s(L_s)//G_s$, $k=\overline{k(s)}$, it is known that $\theta$ induces a bijective map
$\overline{Y_s}(k)\xrightarrow{\theta} Y_s(k)$ on the sets of $k$-points (cf. Proposition 9 (i) of \cite{Se}). By the assumption, geometrically generic fiber of $Y\to S$ is an irreducible normal projective variety. Thus there is a dense open set $U\subset S$ such that any closed point ${\rm Spec}(\overline{k(s))}\to U$ satisfies
(1) $Y_s$ is normal, and (2) the morphism $X_s^{ss}(L_s)\xrightarrow{\pi_s} Y_s$ is generic smooth, where (1) is (iv) of Th{\'e}or{\'e}me (12.2.4) in \cite{Gr} and (2) holds since
$K=Q(A)=k(S)$ is a field of characteristic zero. Then generic smoothness of $\pi_s$ implies the generic smoothness of $\overline{Y_s}\xrightarrow{\theta} Y_s$, which must be
an isomorphism by Zariski main theorem since $Y_s$ is normal.
\end{proof}

Let $(\hat Y,L)$, $(\hat Z,L')$ be polarized projective varieties over an algebraically closed field $K$ of characteristic zero with actions of a reductive group scheme $G$ over $K$, and $\hat Y^{ss}(L)\subset \hat Y$
(resp. $\hat Y^s(L)\subset \hat Y^{ss}(L)$) be the open set of GIT semi-stable (resp. GIT stable) points of $\hat Y$. Then there are projective GIT quotients
\ga{2.1} {\hat Y^{ss}(L)\xrightarrow{\psi} Y:=\hat Y^{ss}(L)//G,\quad \hat Z^{ss}(L')\xrightarrow{\varphi} Z:=\hat Z^{ss}(L')//G.}

\begin{prop}\label{prop2.10} Let $Z$, $Y$ be the GIT quotients in \eqref{2.1}. Assume
\begin{itemize}
\item [(1)] there are $G$-invariant normal open subschemes $\sR\subset \hat Y$, $\sR'\subset \hat Z$ such that $\hat Y^{ss}(L)\subset\sR$, $Z^{ss}(L')\subset \sR'$;
\item [(2)] there is a $G$-invariant $p$-compatible morphism $\sR'\xrightarrow{\hat f}\sR$ such that $\hat f_*\sO_{\sR'}=\sO_{\sR}$;
\item [(3)] there is an $G$-invariant open set $W\subset Z^{ss}(L')$ such that
$${\rm Codim}(\sR'\setminus W)\ge 2, \quad \hat X=\varphi^{-1}\varphi(\hat X)$$ where $\hat X=W\cap \hat f^{-1}(\hat Y^{ss}(L))$.\end{itemize}
If $Z$ is of globally F-regular type. Then so is $Y$.
\end{prop}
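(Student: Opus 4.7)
The plan is to build an induced morphism $f\colon X\to Y$ from the quotient of $\hat f$, verify the identity $f_\ast\sO_X=\sO_Y$ in characteristic zero, spread this identity to a dense set of closed fibers using the $p$-compatibility of $\hat f$ together with Lemma~\ref{lem2.9}, and finally conclude via Lemma~\ref{lem2.3} and Proposition~\ref{prop2.2} applied fiberwise.

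First I would set $X:=\varphi(\hat X)$. The saturation hypothesis $\hat X=\varphi^{-1}(X)$ makes $X$ open in $Z$ and turns $\varphi|_{\hat X}\colon\hat X\to X$ into a good quotient. Since $\hat f|_{\hat X}\colon\hat X\to \hat Y^{ss}(L)$ is $G$-equivariant, descent produces $f\colon X\to Y$ with $\psi\circ\hat f|_{\hat X}=f\circ\varphi|_{\hat X}$. For any affine open $V\subset Y$, set $\hat V:=\psi^{-1}(V)\subset \hat Y^{ss}(L)\subset\sR$. Hypothesis (2) gives $\hat f_\ast\sO_{\sR'}=\sO_\sR$; moreover, since $\hat V\subset \hat Y^{ss}(L)$ one has $\hat f^{-1}(\hat V)\setminus (\hat X\cap \hat f^{-1}(\hat V))=\hat f^{-1}(\hat V)\cap(\sR'\setminus W)$, which is contained in $\sR'\setminus W$ and therefore of codimension $\geq 2$ in the normal open $\hat f^{-1}(\hat V)\subset\sR'$. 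Hartogs then yields
\[ \Gamma(\hat X\cap \hat f^{-1}(\hat V),\sO)=\Gamma(\hat f^{-1}(\hat V),\sO_{\sR'})=\Gamma(\hat V,\sO_\sR). \]
Saturation identifies $\hat X\cap \hat f^{-1}(\hat V)$ with $\varphi^{-1}(f^{-1}(V))$; taking $G$-invariants (both $\psi$ and $\varphi|_{\hat X}$ are good quotients) gives $\Gamma(f^{-1}(V),\sO_X)=\Gamma(V,\sO_Y)$, hence $f_\ast\sO_X=\sO_Y$.

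Next I spread the entire picture over $\Spec A$ for some finitely generated $\mathbb{Z}$-subalgebra $A\subset K$. Applying Lemma~\ref{lem2.9} to both $\psi$ and $\varphi$, and invoking the $p$-compatibility of $\hat f$, I shrink $\Spec A$ to a dense open $U$ so that simultaneously: $Y_s$ and $Z_s$ are the GIT quotients of the closed fibers; $\sR'_s$ remains normal; $\mathrm{Codim}(\sR'_s\setminus W_s)\geq 2$; $\hat X_s$ remains saturated for $\varphi_s$; and $\hat f_{s\ast}\sO_{\sR'_s}=\sO_{\sR_s}$ holds for every geometric closed point $s\in U$. Rerunning the previous paragraph's Hartogs-plus-invariants argument fiberwise then produces $(f_s)_\ast\sO_{X_s}=\sO_{Y_s}$ with $X_s:=\varphi_s(\hat X_s)$ open in $Z_s$. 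Finally, shrinking $U$ further so that $Z_s$ is globally F-regular (using the hypothesis that $Z$ is of globally F-regular type), its open subscheme $X_s\subset Z_s$ is also globally F-regular. Since $\sO_{Y_s}\xrightarrow{\sim}(f_s)_\ast\sO_{X_s}$ is visibly split, Lemma~\ref{lem2.3} produces a stable Frobenius $D$-splitting of the projective $Y_s$ along every effective Cartier divisor, and Proposition~\ref{prop2.2} upgrades this to normality and global F-regularity of $Y_s$. As this holds on a dense subset of $\Spec A$, $Y$ is of globally F-regular type.

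The main obstacle I anticipate is the coordinated spreading-out in the third paragraph: each of normality of $\sR'_s$, the codimension-$2$ bound for $\sR'_s\setminus W_s$, saturation of $\hat X_s$ for $\varphi_s$, the $p$-compatibility identity $\hat f_{s\ast}\sO_{\sR'_s}=\sO_{\sR_s}$, and the good-quotient conclusion of Lemma~\ref{lem2.9} for both $\psi$ and $\varphi$ is individually an openness statement on $\Spec A$, but one must verify that these loci overlap on a common dense open and that the Hartogs-plus-invariants calculation of characteristic zero faithfully reproduces on each surviving fiber, rather than merely passing generically.
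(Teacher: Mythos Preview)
Your proposal is correct and follows essentially the same approach as the paper's proof: define $X=\varphi(\hat X)$, use saturation plus the codimension-$2$ Hartogs argument and passage to $G$-invariants to obtain $f_*\sO_X=\sO_Y$, then spread out over $\Spec A$, invoke Lemma~\ref{lem2.9} and the $p$-compatibility of $\hat f$ to rerun the same computation fiberwise, and conclude via Lemma~\ref{lem2.3}. Your explicit appeal to Proposition~\ref{prop2.2} at the end (to upgrade from stable Frobenius $D$-splitting along Cartier divisors to genuine global $F$-regularity of the projective $Y_s$) is a point the paper leaves implicit, but the logic is the same.
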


\begin{proof} Let $X=\varphi(\hat X)\subset Z$, which is an open set of $Z$ since
$$\varphi(Z^{ss}(L')\setminus\hat X)=Z\setminus X$$
by the condition $\varphi^{-1}(X)=\hat X$ and that $Z^{ss}(L')\setminus\hat X$ is a $G$-invariant closed subset. There is a morphism
$X\xrightarrow{f} Y$ such that
$$\xymatrix{
  \hat X \ar[d]_{\hat f|_{\hat X}} \ar[r]^{\varphi}
                &  X\ar[d]^{f}  \\
  \hat Y^{ss}(L) \ar[r]^{\psi}
                & Y            }$$ is commutative. For any open set $U\subset Y$, since $\hat f_*\sO_{\sR'}=\sO_{\sR}$, we have
$$\aligned\sO_Y(U)&=\sO_{\sR}(\psi^{-1}(U))^{inv.}=\sO_{\sR'}(\hat f^{-1}\psi^{-1}(U))^{inv.}\\
&=\sO_{\sR'}(W\cap \hat f^{-1}\psi^{-1}(U))^{inv.}=\sO_{\hat X}(\hat f|_{\hat X}^{-1}\psi^{-1}(U))^{inv.}\\&=
\sO_{\hat X}(\varphi^{-1}f^{-1}(U))^{inv.}=\sO_X(f^{-1}(U))=f_*\sO_X(U)\endaligned$$
where the third equality holds because $\hat f^{-1}\psi^{-1}(U)\setminus W\cap \hat f^{-1}\psi^{-1}(U)=\hat f^{-1}\psi^{-1}(U)\cap (\sR'\setminus W)$
has codimension at least two. Thus we have
\ga{2.2} {\sO_Y=f_*\sO_X,\,\,\text{where $X$ is of globally F-regular type.}}
To show that $Y$ is of globally F-regular type,  it is enough to show that the morphism $X\xrightarrow{f} Y$ is
$p$-compatible.

Let $(\hat Y_A,\sL)$, $(\hat Z_A,\sL')$ be integral models of $(\hat Y,L)$, $(\hat Z,L')$ with actions of a reductive group scheme $G_A$ over $S={\rm Spec}(A)$, and $\hat Y_A^{ss}(\sL)\subset \hat Y_A$
(resp. $\hat Y_A^s(\sL)\subset \hat Y_A^{ss}(\sL)$) be the open subscheme of GIT semi-stable (resp. GIT stable) points of $\hat Y_A$. Then there are GIT quotients
$$\hat Y_A^{ss}(\sL)\xrightarrow{\psi_A} Y_A:=\hat Y_A^{ss}(\sL)//G_A,\quad \hat Z_A^{ss}(\sL')\xrightarrow{\varphi_A} Z_A:=\hat Z_A^{ss}(\sL')//G_A,$$
which are projective over $S={\rm Spec}(A)$ and $\psi_A$, $\varphi_A$ are surjective $G_A$-invariant affine morphisms (cf. Theorem 4 of \cite{Se}).

We can choose $G_A$-invariant open subschemes
$\sR_A\subset \hat Y_A$, $\sR'_A\subset \hat Z_A$, $W_A\subset Z_A^{ss}(\sL')$, $X_A\subset Z_A$ and a $G_A$-invariant morphism $\sR_A'\xrightarrow{\hat f_A}\sR_A$ such that $\hat Y_A^{ss}(\sL)\subset\sR_A$, $Z_A^{ss}(\sL')\subset \sR_A'$,
$\hat f_{A*}\sO_{\sR_A'}=\sO_{\sR_A}$. Let
$$\hat X_A= \varphi_A^{-1}(X_A), \,\,\sR_s'=\sR'_A\times_A \overline{k(s)},\,\, \sR_s=\sR_A\times_A\overline{k(s)},$$ and $\hat f_s=\hat f_A\otimes \overline{k(s)}$ ($\forall\, s\in S$). Then we have $\hat f_{s*}\sO_{\sR_s'}=\sO_{\sR_s}$,
\ga{2.3} {{\rm Codim}(\sR_s'\setminus W_s)\ge 2,\,\, \hat X_s=W_s\cap \hat f_s^{-1}(\hat Y_A^{ss}(\sL)\times_A\overline{k(s)})} (by shrinking $S$) where $W_s=W_A\times_A\overline{k(s)}$, $\hat X_s=\hat X_A\times_A\overline{k(s)}$ and $$\hat Y_A^{ss}(\sL)\times_A\overline{k(s)}=\hat Y_s^{ss}(\sL_s),\quad \hat Z_A^{ss}(\sL')\times_A\overline{k(s)}=\hat Z_s^{ss}(\sL'_s)$$ (cf. Proposition 7 of \cite{Se}). Then, by Lemma \ref{lem2.9}, we have
$$Z_s=Z^{ss}_s(\sL'_s)//G_s, \quad Y_s=Y_s^{ss}(\sL_s)//G_s.$$ Thus, for any open sets $U\subset Z_s$, $V\subset Y_s$,  one has
$$\sO_{Z_s}(U)=\sO_{\sR'_s}(\varphi_s^{-1}(U))^{inv.},\quad \sO_{Y_s}(V)=\sO_{\sR_s}(\psi_s^{-1}(V))^{inv.}.$$
Recall $X_s\subset Z_s$, $\varphi_s^{-1}(X_s)=\hat X_s=W_s\cap \hat f_s^{-1}(\hat Y_s^{ss}(\sL_s))$ and consider
$$\xymatrix{
  \hat X_s \ar[d]_{\hat f_s} \quad \ar[r]^{\varphi_s}
                &  X_s\ar[d]^{f_s}  \\
  \hat Y_s^{ss}(\sL_s) \ar[r]^{\psi_s}
                & Y_s            }$$
we have $\sO_{Y_s}(V)=\sO_{\sR_s}(\psi_s^{-1}(V))^{inv.}=\sO_{\sR'_s}(\hat f_s^{-1}(\psi_s^{-1}(V)))^{inv.}$ since $\hat f_{s*}\sO_{\sR'_s}=\sO_{\sR_s}$.
Because the codimension of $$\hat f_s^{-1}(\psi_s^{-1}(V))\setminus W_s\cap \hat f_s^{-1}(\psi_s^{-1}(V))=\hat f_s^{-1}(\psi_s^{-1}(V))\cap (\sR'_s\setminus W_s)$$
is at least two, we have
$$\aligned \sO_{Y_s}(V)&=\sO_{\hat X_s}(\hat f_s^{-1}(\psi_s^{-1}(V)))^{inv.}=\sO_{\hat X_s}(\varphi_s^{-1}f_s^{-1}(V))^{inv.}\\&=
\sO_{X_s}(f^{-1}_s(V))=(f_s)_*\sO_{X_s}(V).\endaligned$$
Thus $\sO_{Y_s}=(f_s)_*\sO_{X_s}$, which implies that $f:X\to Y$ is $p$-compatible and $Y$ is of globally F-regular type since $X$ is so.

\end{proof}

\section{Moduli spaces of parabolic bundles and generalized parabolic sheaves}

In this section, we prove that moduli spaces of parabolic bundles and generalized parabolic sheaves with a fixed
determinant on a smooth curve are of globally F-regular type.

Let $C$ be an irreducible projective curve of genus $g\ge 0$ over an
algebraically closed field $K$ of characteristic zero, which has at most
one node $x_0\in C$. Let $I$ be a finite set of smooth points of $C$, and
$E$ be a coherent sheaf of rank $r$ and degree $d$ on $C$ (the rank
$r(E)$ is defined to be dimension of $E_{\xi}$ at generic point
$\xi\in C$, and $d=\chi(E)-r(1-g)$).

\begin{defn}\label{defn3.1} By a quasi-parabolic structure of $E$ at a
smooth point $x\in C$, we mean a choice of flag of quotients
$$E_x=Q_{l_x+1}(E)_x\twoheadrightarrow
Q_{l_x}(E)_x\twoheadrightarrow\cdots\cdots\twoheadrightarrow
Q_1(E)_x\twoheadrightarrow Q_0(E)_x=0$$ of the fibre $E_x$, $n_i(x)={\rm
dim}(ker\{Q_i(E)_x\twoheadrightarrow Q_{i-1}(E)_x\})$ ($1\le i\le l_x+1$)
are called type of the flags. If, in addition, a sequence of integers
$$0\leq a_1(x)<a_2(x)<\cdots
<a_{l_x+1}(x)< k$$ are given, we call that $E$ has a parabolic
structure of type $$\vec n(x)=(n_1(x),n_2(x),\cdots,n_{l_x+1}(x))$$ and
weight $\vec a(x)=(a_1(x),a_2(x),\cdots,a_{l_x+1}(x))$ at $x\in C$.
\end{defn}

\begin{defn}\label{defn3.2} For any subsheaf $F\subset E$, let $Q_i(E)_x^F\subset
Q_i(E)_x$ be the image of $F$ and $n_i^F={\rm
dim}(ker\{Q_i(E)_x^F\twoheadrightarrow Q_{i-1}(E)_x^F\})$. Let
$${\rm par}\chi(E):=\chi(E)+\frac{1}{k}\sum_{x\in
I}\sum^{l_x+1}_{i=1}a_i(x)n_i(x),$$
$${\rm par}\chi(F):=\chi(F)+\frac{1}{k}\sum_{x\in
I}\sum^{l_x+1}_{i=1}a_i(x)n^F_i(x).$$
Then $E$ is called semistable (resp., stable) for $\omega=(k, \{\vec n(x),\,\,\vec a(x)\}_{x\in I})$ if for any
nontrivial $E'\subset E$ such that $E/E'$ is torsion free,
one has
$${\rm par}\chi(E')\leq
\frac{{\rm par}\chi(E)}{r}\cdot r(E')\,\,(\text{resp., }<).$$
\end{defn}

\begin{thm}[Theorem X1 of \cite{NR} or Theorem 2.13 of \cite{Su3} for arbitrary rank]\label{thm3.3} There
exists a seminormal projective variety
$$\sU_{C,\,\omega}:=\sU_C(r,d,
\{k,\vec n(x),\vec a(x)\}_{x\in I}),$$ which is the coarse moduli
space of $s$-equivalence classes of semistable parabolic sheaves $E$
of rank $r$ and $\chi(E)=\chi=d+r(1-g)$ with parabolic structures of type
$\{\vec n(x)\}_{x\in I}$ and weights $\{\vec a(x)\}_{x\in I}$ at
points $\{x\}_{x\in I}$. If $C$ is smooth, then it is normal, with
only rational singularities.
\end{thm}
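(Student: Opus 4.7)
The plan is to realize $\sU_{C,\,\omega}$ as a projective GIT quotient of a parameter space of quasi-parabolic sheaves, following the strategy of \cite{NR} and \cite{Su3}. First I would fix an integer $N\gg 0$ so that every semistable parabolic sheaf $E$ with $\chi(E)=\chi$ is generated by global sections after twisting by $\sO_C(N)$ and satisfies $H^1(C,E(N))=0$; set $P=\chi(E(N))$. Let $Q$ denote the Grothendieck Quot scheme parametrizing coherent quotients $\sO_C^{\oplus P}\twoheadrightarrow E(N)$ with the prescribed Hilbert polynomial, and let $\sR\to Q$ be the fibre product over $x\in I$ of the relative flag bundles $\mathrm{Flag}_{\vec n(x)}(\sE_x)$, where $\sE$ is the universal quotient on $Q\times C$. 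Then $\sR$ is quasi-projective with a natural $GL(P)$-action whose orbits correspond to isomorphism classes of quasi-parabolic sheaves equipped with a choice of $P$ generating global sections.

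Next I would build the polarization. Using $\omega=(k,\{\vec n(x),\vec a(x)\})$, form a line bundle $\Theta_{\sR,\,\omega}$ on $\sR$ as a suitable tensor product of a power of the determinant-of-cohomology bundle on $Q$ (which detects slope semistability) with tautological line bundles on each flag factor, weighted by the $\vec a(x)$ and by $k$. Since the scalar centre of $GL(P)$ acts trivially on the quotient, $\Theta_{\sR,\,\omega}$ linearizes the induced $SL(P)$-action. The core step is then the Hilbert--Mumford computation: a point of $\sR$ is GIT (semi)stable with respect to $\Theta_{\sR,\,\omega}$ if and only if the corresponding parabolic sheaf is (semi)stable in the sense of Definition \ref{defn3.2}. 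Concretely, one matches one-parameter subgroups of $SL(P)$ with weighted filtrations $E'\subset E$ and identifies the Mumford $\mu$-weight with the difference $\mathrm{par}\,\chi(E')/r(E')-\mathrm{par}\,\chi(E)/r$; this computation for arbitrary rank and flag type is the content of Theorem~2.13 of \cite{Su3}, and I would invoke it rather than redo it.

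With this identification in hand, Mumford's theorem produces the projective GIT quotient $\sU_{C,\,\omega}:=\sR^{ss}//SL(P)$, and the coarse moduli property for $s$-equivalence classes of polystable parabolic sheaves then follows formally from the characterization of closed orbits in $\sR^{ss}$. Seminormality of $\sU_{C,\,\omega}$ follows from seminormality of $\sR^{ss}$ together with the standard fact that good quotients by reductive groups preserve seminormality. When $C$ is smooth, every sheaf occurring in the semistable locus is locally free of rank $r$; choosing $N$ large enough makes $\mathrm{Ext}^{1}(K,E)=H^{1}(C,K^{\vee}\otimes E)=0$ for the kernel $K$ of the surjection, so the semistable locus in $Q$ is smooth and hence so is $\sR^{ss}$. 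Boutot's theorem then implies that $\sU_{C,\,\omega}=\sR^{ss}//SL(P)$ is normal with only rational singularities.

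The only genuine obstacle is the Hilbert--Mumford match between GIT semistability on $(\sR,\Theta_{\sR,\,\omega})$ and parabolic semistability, where the weights $\vec a(x)$, the level $k$, and the flag types $\vec n(x)$ must conspire correctly with the linearization on $\sR$. Everything else --- projectivity of the quotient, seminormality, and the Boutot step for smooth $C$ --- is a standard consequence of GIT together with classical invariant theory. Since the theorem is established in the cited references, the role of this plan is primarily to fix the parameter space $\sR$ and the polarization $\Theta_{\sR,\,\omega}$, which will be reused throughout the subsequent sections in verifying the conditions of Proposition \ref{prop2.10}.
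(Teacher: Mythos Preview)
Your proposal is correct and matches the paper's approach exactly: the paper does not give an independent proof of Theorem~\ref{thm3.3} but cites it from \cite{NR} and \cite{Su3}, and immediately afterward recalls precisely the construction you outline---the Quot scheme $\bold Q$, the relative flag bundle $\sR=\underset{x\in I}{\times_{\bold Q}}Flag_{\vec n(x)}(\sF_x)$, the explicit polarization $\Theta_{\sR,\omega}$, and the GIT quotient $\sR^{ss}_{\omega}\xrightarrow{\psi}\sU_{C,\,\omega}$. Your remark that the real content lies in the Hilbert--Mumford computation (deferred to \cite{Su3}) and that the rest of the argument sets up $\sR$ and $\Theta_{\sR,\omega}$ for later use in Proposition~\ref{prop2.10} is exactly how the paper uses this material.
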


Recall the construction of $\sU_{C,\,\omega}=\sU_C(r,d,\omega)$. Fix a line bundle $\sO(1)=\sO_C(c\cdot y)$ on $C$ of ${\rm deg}(\sO(1))=c$, let
$\chi=d+r(1-g)$, $P$ denote the polynomial $P(m)=crm+\chi$,
$\sO_C(-N)=\sO(1)^{-N}$ and $V=\Bbb C^{P(N)}$. Let $\bold Q$ be the Quot scheme of quotients $V\otimes\sO_{C}(-N)\to F\to 0$ (of rank
$r$ and degree $d$) on $C$. Thus there is on $C\times\bold Q$ a universal quotient
$$V\otimes\sO_{C\times\bold Q}(-N)\to \sF\to 0.$$
Let $\sF_x=\sF|_{\{x\}\times\bold Q}$ and $Flag_{\vec n(x)}(\sF_x)\to\bold Q$ be the relative flag scheme of type $\vec n(x)$. Let
$$\sR=\underset{x\in I}{\times_{\bold Q}}Flag_{\vec n(x)}(\sF_x)\to \bold Q,$$
on which reductive group ${\rm SL}(V)$ acts. The data $\omega=(k, \{\vec n(x),\,\,\vec a(x)\}_{x\in I})$, more precisely, the
weight $(k,\{\vec a(x)\}_{x\in I})$ determines a polarisation
$$\Theta_{\sR,\omega}=({\rm
det}R\pi_{\sR}\sE)^{-k}\otimes\bigotimes_{x\in I}
\lbrace\bigotimes^{l_x}_{i=1} {\rm det}(\sQ_{\{x\}\times
\sR,i})^{d_i(x)}\rbrace\otimes\bigotimes_q{\rm
det}(\sE_y)^{\ell}$$
on $\sR$ such that the open set $\sR^{ss}_{\omega}$ (resp. $\sR^s_{\omega}$) of
GIT semistable (resp. GIT stable) points are precisely the set of semistable (resp. stable) parabolic sheaves on $C$ (see \cite{Su3}), where $\sE$ is the pullback of $\sF$ (under
 $C\times\sR\to C\times \bold Q$), ${\rm
det}R\pi_{\sR}\sE$ is determinant line bundle of cohomology,
$$\sE_x=\sQ_{\{x\}\times \sR,l_x+1}\twoheadrightarrow\sQ_{\{x\}\times \sR,l_x}\twoheadrightarrow \sQ_{\{x\}\times \sR,l_x-1}
\twoheadrightarrow\cdots\twoheadrightarrow \sQ_{\{x\}\times
\sR,1}\twoheadrightarrow0$$ are universal quotients on $\sR$ of type $\vec n(x)$, $d_i(x)=a_{i+1}(x)-a_i(x)$ and
$$\ell:=\frac{k\chi-\sum_{x\in I}\sum^{l_x}_{i=1}d_i(x)r_i(x)}{r}.$$
Then $\sU_{C,\,\omega}$ is the GIT quotient $\sR^{ss}_{\omega}\xrightarrow{\psi} \sU_{C,\,\omega}:=\sU_C(r,d, \omega)$ and $\Theta_{\sR^{ss},\omega}$
descends to an ample line bundle $\Theta_{\sU_{C,\,\omega}}$ on $\sU_{C,\,\omega}$ when $\ell$ is an integer.

\begin{defn}\label{defn3.4} When $C$ is a smooth projective curve, let
$${\rm Det}: \sU_{C,\,\omega}\to J^d_C,\quad E\mapsto {\rm det}(E):=\bigwedge^rE$$
be the determinant map. Then, for any $L\in J^d_C$, the fiber
$${\rm Det}^{-1}(L):=\sU_{C,\,\omega}^L$$ is called moduli space of semistable parabolic bundles with a fixed determinant.
\end{defn}

Let $\sR^L_F\subset \sR$ be the sub-scheme of locally free sheaves with a fixed determinant $L$, and $(\sR^{ss}_{\omega})^L\subset\sR^{ss}_{\omega}$, $\,(\sR^{s}_{\omega})^L\subset\sR^s_{\omega}$
be the closed subsets of locally free sheaves with the fixed determinant $L$. Then $\sU_{C,\,\omega}^L$ is the GIT quotient
$(\sR^{ss}_{\omega})^L\xrightarrow{\psi}(\sR^{ss}_{\omega})^L//{\rm SL}(V):=\sU_{C,\,\omega}^L$. The proof of globally F-regular type of $\sU_{C,\,\omega}^L$ needs essentially the following two results.

\begin{prop}\label{prop3.5} Let $|{\rm I}|$ be the number
of parabolic points. Then, for any data $\omega=(k, \{\vec n(x),\,\,\vec a(x)\}_{x\in I})$, we have
\begin{itemize}
 \item[(1)]  $\,\,{\rm Codim}((\sR_{\omega}^{ss})^L\setminus (\sR_{\omega}^s)^L)\ge (r-1)(g-1)+\frac{1}{k}|{\rm
 I}|$,
\item[(2)]  $\,\,{\rm Codim} (\sR^L_F\setminus(\sR_{\omega}^{ss})^L)>(r-1)(g-1)+\frac{1}{k}|{\rm
I}|$.
\end{itemize}
\end{prop}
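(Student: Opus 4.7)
The plan is to establish both codimension bounds by the standard stratification of the parameter space $\sR^L_F$ according to the Jordan--H\"older type of the underlying parabolic bundle (for part (1)) and the Harder--Narasimhan type (for part (2)), following the strategy used for rank two in \cite{NR} and extended to arbitrary rank in \cite{Su3}, adapted here to the fixed-determinant parabolic setting. The two bounds then follow from a dimension count on families of extensions of parabolic bundles.

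For (1), any point of $(\sR_{\omega}^{ss})^L \setminus (\sR_{\omega}^s)^L$ represents a properly semistable parabolic bundle $E$ which admits a saturated parabolic sub-bundle $E_1 \subset E$ of some rank $r_1$ with $1 \le r_1 \le r-1$ and $\text{par}\chi(E_1)/r_1 = \text{par}\chi(E)/r$; put $E_2 = E/E_1$, $r_2 = r - r_1$. One stratifies by the discrete invariants $(r_1,\det E_1,\{\vec n_1(x)\}_{x\in I})$. Each stratum is parameterised (up to the $\mathrm{SL}(V)$-action) by the moduli of $E_1$, the moduli of $E_2$ with their induced parabolic structures and determinants, and by ${\rm Ext}^1_{par}(E_2,E_1)/{\rm Hom}_{par}(E_2,E_1)$. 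By parabolic Riemann--Roch,
\[
\chi_{par}({\rm Hom}(E_2,E_1)) \;=\; -\,r_1 r_2 (g-1) \;-\; \delta(\omega,r_1),
\]
where $\delta(\omega,r_1)$ encodes the difference between the flag dimension of $E$ at each $x\in I$ and the sum of the flag dimensions of $E_1$ and $E_2$, together with the parabolic weight contribution. The equality of parabolic slopes forces ${\rm Hom}_{par}(E_2,E_1)=0$ generically, so the codimension of the stratum in $\sR^L_F$ is at least $r_1 r_2 (g-1) + \delta(\omega,r_1)$. Since the parabolic weights are $\frac{1}{k}$-multiples of integers and at least one flag at each $x\in I$ must split nontrivially between $E_1$ and $E_2$, one has $\delta(\omega,r_1)\ge |I|/k$. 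Minimising $r_1 r_2 (g-1)$ subject to $r_1+r_2=r$ gives the minimum $(r-1)(g-1)$ at $r_1=1$ or $r_1=r-1$, yielding (1).

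For (2), the same stratification applies to the unstable locally free locus: each point of $\sR^L_F\setminus(\sR_{\omega}^{ss})^L$ admits a sub-bundle $E_1\subset E$ with $\text{par}\chi(E_1)/r_1 > \text{par}\chi(E)/r$. Because $k\cdot\text{par}\chi$ takes integer values, a strict inequality of parabolic slopes contributes an extra $+1/k$ (in fact $+1$ after multiplying by appropriate integers) compared with the semistable case, so the codimension is \emph{strictly} greater than $(r-1)(g-1)+|I|/k$. For the non-locally-free points of $\sR^L_F\setminus (\sR^{ss}_\omega)^L$ that remain (sheaves with torsion at some point of $C\setminus I$), a direct computation in the Quot scheme shows that their codimension already exceeds the required bound.

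The main technical obstacle is the careful bookkeeping of parabolic weights in parabolic Riemann--Roch for ${\rm Ext}^1_{par}(E_2,E_1)$: one must check that the fractional contribution from the parabolic data at the points of $I$ adds up to precisely $|I|/k$ (or gives the strict inequality in (2)), and that the fixed-determinant condition only removes a $g$-dimensional Picard direction uniformly from both $\sR^L_F$ and the stratum, so the codimension estimates in the full moduli situation carry over verbatim. This is the same mechanism underlying the analogous estimates in \cite{Su3}, restricted to the determinant fibre.
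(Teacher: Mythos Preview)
Your stratification-and-dimension-count argument is exactly the method of Proposition~5.1 in \cite{Su1}, which is all the paper's proof invokes: the paper merely remarks that the same proof goes through with fixed determinant and that the $\tfrac{1}{k}|I|$ term, dropped in \cite{Su1}, can be retained. So you are reproducing the intended argument, with more detail than the paper itself gives.

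One point to tighten. In part~(1) your justification ``since the parabolic weights are $\tfrac{1}{k}$-multiples of integers and at least one flag at each $x\in I$ must split nontrivially between $E_1$ and $E_2$, one has $\delta(\omega,r_1)\ge |I|/k$'' conflates two distinct contributions. The flag-dimension discrepancy at each $x$, namely $\sum_{i<j}\bigl(n_{1,i}(x)n_{2,j}(x)+n_{2,i}(x)n_{1,j}(x)\bigr)$, is an \emph{integer}; the factor $1/k$ arises from the discreteness of parabolic slopes and genuinely enters through the \emph{strict} inequality in part~(2), not through flag splitting. In the equal-slope situation of part~(1) you should verify directly that the parabolic Euler-characteristic term is positive at each $x$; in fact the integer flag contribution above is $\ge 1$ whenever $l_x\ge 1$, which already yields $(r-1)(g-1)+|I|$ and hence the weaker $(r-1)(g-1)+|I|/k$. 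The paper does not supply this step either, so you are not missing anything the paper provides, but the mechanism you name for the $1/k$ is not the right one.
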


\begin{proof} This is in fact Proposition 5.1 of \cite{Su1} where we did not fix determinant and the term $\frac{1}{k}|{\rm
 I}|$ was omitted. However, the proof also works for the case of fixed determinant.
\end{proof}

\begin{prop}\label{prop3.6} Let $\omega_c=(2r, \{\vec n(x),\,\,\vec a_c(x)\}_{x\in I})$, where
$$\vec a_c(x)=(\bar a_1(x),\bar a_2(x),\cdots,\bar a_{l_x+1}(x))$$
satisfy $\bar a_{i+1}(x)-\bar a_i(x)=n_i(x)+n_{i+1}(x)$ ($1\le i\le l_x$). Then, when
\ga{3.1} {(r-1)(g-1)+\frac{|I|}{2r}\ge 2,}
the moduli space $\sU^L_{C,\,\omega_c}=(\sR^{ss}_{\omega_c})^L//{\rm SL}(V)$ is a normal Fano variety with only rational singularities.
\end{prop}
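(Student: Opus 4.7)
The plan is to establish the three properties separately. Normality and rational singularities of $\sU^L_{C,\omega_c}$ are consequences of Theorem \ref{thm3.3}: since $C$ is smooth, $\sU_{C,\omega_c}$ is normal with rational singularities, and the tensor-product action of $J^0_C$ on $\sU_{C,\omega_c}$ is transitive on the fibers of the determinant morphism, so every fiber --- in particular $\sU^L_{C,\omega_c}$ --- inherits these properties.

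The heart of the matter is the Fano property. My strategy is to identify the anticanonical bundle $\omega^{-1}_{\sU^L_{C,\omega_c}}$ with the theta line bundle $\Theta_{\sU^L_{C,\omega_c}}$, which is ample by the GIT construction. I would work first on the stable locus $(\sR^s_{\omega_c})^L$: since $\psi$ restricts there to a principal $PGL(V)$-bundle and $PGL(V)$ is semisimple, the relative canonical is trivial as a linearized bundle, and hence $\omega^{-1}_{\sU^{L,s}_{C,\omega_c}}$ is the descent of $\omega^{-1}_{(\sR^s_{\omega_c})^L}$. The key computation is then $\omega^{-1}_{\sR^L_F}$ on the smooth parameter space. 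Using the flag-bundle structure over the locally free part of the Quot scheme with fixed determinant, the tangent bundle decomposes into a ``Quot part'' --- whose contribution to the anticanonical is $(\det R\pi_*\sE)^{2r}$ by Grothendieck--Riemann--Roch applied to the universal complex on $C\times\sR^L_F$, the exponent $2r$ being the dual Coxeter number of $SL_r$ --- and, at each $x\in I$, a flag-variety part contributing $\bigotimes_{i=1}^{l_x}\det(\sQ_{\{x\}\times\sR,i})^{n_i(x)+n_{i+1}(x)}$ (the standard anticanonical of the flag variety of type $\vec n(x)$). Reading off the result one recognizes precisely the polarization $\Theta_{\sR,\omega_c}$ with $k=2r$ and $d_i(x)=\bar a_{i+1}(x)-\bar a_i(x)=n_i(x)+n_{i+1}(x)$ --- the $\det(\sE_y)^\ell$ factor being trivial after fixing the determinant $L$. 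Hence $\omega^{-1}_{\sU^{L,s}_{C,\omega_c}}\cong\Theta_{\sU^L_{C,\omega_c}}|_{\sU^{L,s}_{C,\omega_c}}$.

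To upgrade this to an isomorphism on all of $\sU^L_{C,\omega_c}$, I would invoke Proposition \ref{prop3.5}: under hypothesis \eqref{3.1} with $k=2r$, the codimension of $(\sR^{ss}_{\omega_c})^L\setminus(\sR^s_{\omega_c})^L$ is at least two, and the same holds for the strictly semistable locus inside the normal variety $\sU^L_{C,\omega_c}$. Both $\omega^{-1}$ and $\Theta$ are then reflexive rank-one sheaves agreeing on an open set whose complement has codimension $\ge 2$, so the agreement extends globally; ampleness of $\Theta$ then yields the Fano property. The main technical obstacle is the Riemann--Roch computation on $C\times\sR^L_F$ --- tracking simultaneously the global $\mathrm{End}$-deformations, the local flag deformations, the trace-zero/fixed-determinant condition, and the $SL(V)$-linearization so that everything descends. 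The choice of the ``canonical'' weights $\omega_c$ was engineered precisely so that the resulting coefficients match: the coincidences $k=2r=2h^\vee(SL_r)$ and $d_i(x)=n_i(x)+n_{i+1}(x)$ are what single out $\omega_c$ as the anticanonical polarization. In practice one may adapt the analogous canonical bundle computation available in the literature on moduli of (parabolic) $G$-bundles rather than redo it ab initio.
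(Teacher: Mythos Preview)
Your approach is essentially the same as the paper's: compute $\omega^{-1}_{\sR^L_F}$, recognize it as the polarization $\Theta_{\sR,\omega_c}$ determined by the data $\omega_c$ (the paper cites \cite{Su1}, Proposition~2.2 and \cite{Su3}, Proposition~4.2 for this computation rather than redoing it), and then use the codimension estimate of Proposition~\ref{prop3.5}(1) to pass to the quotient.

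The one place your argument is looser than the paper's is the descent step. You assert that since $(\sR^{ss}_{\omega_c})^L\setminus(\sR^s_{\omega_c})^L$ has codimension $\ge 2$, ``the same holds for the strictly semistable locus inside the normal variety $\sU^L_{C,\omega_c}$,'' and then extend the isomorphism $\omega^{-1}\cong\Theta$ by reflexivity. But codimension $\ge 2$ upstairs does not automatically give codimension $\ge 2$ downstairs: fibers of $\psi$ over strictly polystable points can have dimension strictly smaller than $\dim PGL(V)$ (the closed orbit has positive-dimensional stabilizer), so a priori the image could be a divisor. The paper sidesteps this by invoking Knop's result \cite{Kn} (see also Lemma~4.17 of \cite{NR}), which gives directly that $(\psi_*\omega_{(\sR^{ss}_{\omega_c})^L})^{inv.}=\omega_{\sU^L_{C,\omega_c}}$ once (i) the non-free locus has codimension $\ge 2$ and (ii) every prime divisor upstairs maps to codimension $\le 1$. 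Condition (i) is your codimension estimate, and (ii) is automatic here precisely because the strictly semistable locus has codimension $\ge 2$ upstairs and hence contains no prime divisor. So Knop is the clean way to close the gap; your reflexivity argument can be made to work but needs this extra observation.
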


\begin{proof} It is in fact a reformulation of Proposition 2.2 of \cite{Su1} where a formula of anti-canonical bundle $\omega^{-1}_{\sR_F}$ (thus
a formula of $\omega^{-1}_{\sR^L_F}$) was given (see also Proposition 4.2 of \cite{Su3} for a tidier formula). The line bundle $\omega^{-1}_{\sR^L_F}$
is precisely determined by the data $\omega_c$ and descends to an ample line bundle $\Theta_{\sU^L_{C,\,\omega_c}}$, which is precisely $\omega^{-1}_{\sU^L_{C,\,\omega_c}}$ when $${\rm Codim}((\sR_{\omega_c}^{ss})^L\setminus (\sR_{\omega_c}^s)^L)\ge 2$$ by a result of F. Knop (see \cite{Kn}).
Thus we are done by the condition \eqref{3.1} and (1) of Proposition \ref{prop3.5}.
\end{proof}

\begin{thm}\label{thm3.7} The moduli spaces $\sU_{C,\,\omega}^L$ are of globally F-regular type. If Jacobian $J^0_C$ of $C$ is of F-split type,
so is $\sU_{C,\,\omega}$.
\end{thm}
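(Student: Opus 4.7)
The plan is to apply Proposition \ref{prop2.10} with $Y=\sU_{C,\,\omega}^L$ and $Z=\sU_{C,\,\omega_c}^L$, where $\omega_c$ is the canonical weight data of Proposition \ref{prop3.6}. Since $\omega_c$ has the \emph{same} parabolic type $\vec n(x)$ as $\omega$, both GIT problems are carried out on the common parameter space $\sR_F^L$: only the polarizations $\Theta_{\sR,\,\omega}$ and $\Theta_{\sR,\,\omega_c}$ differ. I therefore take $\sR=\sR'=\sR_F^L$ and $\hat f=\mathrm{id}$, which is trivially $p$-compatible and satisfies $\hat f_\ast\sO_{\sR'}=\sO_\sR$.

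With these choices the remaining hypotheses of Proposition \ref{prop2.10} should follow quickly. Set $W=(\sR^s_{\omega_c})^L$, the $\omega_c$-stable locus. Combining parts (1) and (2) of Proposition \ref{prop3.5} (applied to $\omega_c$) gives
\[\mathrm{Codim}(\sR_F^L\setminus W)\ge (r-1)(g-1)+\frac{|I|}{2r},\]
which is $\ge 2$ under hypothesis \eqref{3.1}. Taking $\hat X=W\cap(\sR^{ss}_\omega)^L$, the saturation $\hat X=\varphi^{-1}\varphi(\hat X)$ is automatic because $\varphi$ is a geometric quotient on the stable locus. By Proposition \ref{prop3.6}, $Z$ is Fano with rational singularities, so of globally $F$-regular type by Proposition \ref{prop2.6}. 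Proposition \ref{prop2.10} then yields globally $F$-regular type for $\sU_{C,\,\omega}^L$ under \eqref{3.1}.

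The main obstacle is the boundary cases $g\le 1$ with few parabolic points, where \eqref{3.1} fails. My proposed fix is to introduce an auxiliary flag bundle $\hat f:\sR'^L_F\to\sR_F^L$ that refines $\vec n(x)$ at suitable points to a finer type $\vec n\,'(x)$, so that the refined data $\omega'_c$ satisfy \eqref{3.1}. Since $\hat f$ is smooth proper with connected (partial flag) fibers, $\hat f_\ast\sO_{\sR'^L_F}=\sO_{\sR_F^L}$ and is $p$-compatible by flat base change, so Proposition \ref{prop2.10} applies with this flag bundle as $\sR'$ and the refined Fano companion in the role of $Z$.

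For the second statement, the tensor-product map defines a finite \'etale cover $\pi:\sU_{C,\,\omega}^L\times J^0_C\to\sU_{C,\,\omega}$, $(E,M)\mapsto E\otimes M$, with Galois group $J^0_C[r]\cong(\Z/r\Z)^{2g}$ acting anti-diagonally. Restricting to primes $p\nmid r$ within the dense locus on which both $\sU_{C,\,\omega}^L$ and $J^0_C$ are $F$-split, the product is $F$-split. Since $p\nmid|J^0_C[r]|$, the trace splits $\sO_{\sU_{C,\,\omega}}\hookrightarrow\pi_\ast\sO_{\sU_{C,\,\omega}^L\times J^0_C}$, and a short diagram chase (push the splitting $\sO\to F_\ast\sO$ on the cover through $\pi_\ast$, then compose with the trace) transfers $F$-splitness to $\sU_{C,\,\omega}$.
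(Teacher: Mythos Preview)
Your overall strategy---apply Proposition \ref{prop2.10} with the Fano companion $\sU_{C,\,\omega_c}^L$ of Proposition \ref{prop3.6}, taking $W$ to be the $\omega_c$-stable locus---is exactly the paper's. Your simplification to $\hat f=\mathrm{id}$ on $\sR_F^L$ when \eqref{3.1} already holds is valid and is just the degenerate $I'=\emptyset$ case of the paper's argument; the saturation check $\hat X=\varphi^{-1}\varphi(\hat X)$ is indeed automatic on the stable locus. The second statement (F-split type via the $r^{2g}$-fold cover $J^0_C\times\sU_{C,\,\omega}^L\to\sU_{C,\,\omega}$) matches the paper as well.

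The gap is in your boundary-case fix. Refining $\vec n(x)$ to a finer type $\vec n'(x)$ at the \emph{same} points $x\in I$ does not improve the codimension bound of Proposition \ref{prop3.5}: that bound is $(r-1)(g-1)+\frac{1}{k}|I|$, and for the canonical data one always has $k=2r$, so the estimate depends only on the \emph{number} of parabolic points, not on the flag types. Hence refinement cannot force \eqref{3.1}. The paper's remedy is instead to \emph{add new parabolic points}: choose a finite set $I'\subset C$ disjoint from $I$ with
\[
(r-1)(g-1)+\frac{|I|+|I'|}{2r}\ge 2,
\]
set $\sR'=\sR\times_{\bold Q}\bigl(\underset{x\in I'}{\times_{\bold Q}}\mathrm{Flag}_{\vec n(x)}(\sF_x)\bigr)\xrightarrow{\hat f}\sR$, and use $\omega'=(2r,\{\vec n(x),\vec a_c(x)\}_{x\in I\cup I'})$. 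This $\hat f$ is a flag bundle (hence $p$-compatible with $\hat f_*\sO_{\sR'^L_F}=\sO_{\sR_F^L}$), and Proposition \ref{prop3.5} on $\sR'^L_F$ now gives codimension $\ge 2$ for the enlarged index set $I\cup I'$. With this correction, the rest of your sketch goes through unchanged.
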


\begin{proof} Choose a subset $I'\subset C$  such that $I'\cap I=\emptyset$ and
\ga{3.2} {(r-1)(g-1)+\frac{|I|+|I'|}{2r}\ge 2.}
Let $$\sR'=\underset{x\in I\cup I'}{\times_{\mathbf{Q}}}
Flag_{\vec n(x)}(\sF_x)=\sR\times_{\mathbf{Q}}\left(\underset{x\in I'}{\times_{\mathbf{Q}}}
Flag_{\vec n(x)}(\sF_x)\right)\xrightarrow{\hat f} \sR$$ be the projection, $\hat Y\subset\sR$ be the Zariski closure of $\sR^L_F$ and
$$\hat Z={\hat f}^{-1}(\hat Y)\subset \sR',\quad {\sR'}^L_F={\hat f}^{-1}(\sR^L_F)\subset\hat Z.$$
Then ${\sR'}^L_F\subset \hat Z$, $\sR^L_F\subset \hat Y$ are normal (in fact, smooth) ${\rm SL}(V)$-invariant open sub-schemes such that
$\hat Y^{ss}_{\omega}=(\sR^{ss}_{\omega})^L\subset \sR^L_F$,  $\hat Z^{ss}_{\omega'}=(\sR'^{ss}_{\omega'})^L\subset \sR'^L_F$
holds for any polarizations determined by data $\omega$, $\omega'$. It is clear that
$ \sR'^L_F\xrightarrow{\hat f}\sR^L_F$ is a flag bundle and $p$-compatible with $\hat f_*\sO_{\sR'^L_F}=\sO_{\sR^L_F}$. Thus
$\sR'^L_F\subset \hat Z$, $\sR^L_F\subset \hat Y$, $\sR'^L_F\xrightarrow{\hat f}\sR^L_F$
satisfy the conditions (1) and (2) of Proposition \ref{prop2.10}. To verify condition (3) in Proposition \ref{prop2.10}, let
$$W:=\hat Z^{s}_{\omega'}=(\sR'^{s}_{\omega'})^L\subset \sR'^L_F,\quad \hat X=\hat f^{-1}((\sR^{ss}_{\omega})^L)\cap W,$$
$\hat Z^{ss}_{\omega'}\xrightarrow{\varphi} Z:=\hat Z^{ss}_{\omega'}//{\rm SL}(V)$ and $X=\varphi(\hat X)\subset Z$. It is clear that
$$\hat X=\varphi^{-1}(X).$$
If we choose $\omega'=(2r, \{\vec n(x),\,\,\vec a_c(x)\}_{x\in I\cup I'})$ in Proposition \ref{prop3.6}, then
$${\rm Codim}({\sR'}^L_F\setminus W)\ge (r-1)(g-1)+\frac{|I|+|I'|}{2r}\ge 2$$
by Proposition \ref{prop3.5}, and $Z$ is a normal Fano variety with only rational singularities.
Thus $Z$ is of globally F-regular type by Proposition \ref{prop2.6}, so is $\sU_{C,\,\omega}^L=(\sR^{ss}_{\omega})^L//{\rm SL}(V)=\hat Y^{ss}_{\omega}//{\rm SL}(V)$
by Proposition \ref{prop2.10}.

If $J^0_C$ is of F-split type, so is $J^0_C\times \sU^L_{C,\,\omega}$. We have a $r^{2g}$-fold covering
$$J^0_C\times \sU^L_{C,\,\omega}\xrightarrow{f} \sU_{C,\,\omega}, \quad f(\sL_0, E)=\sL_0\otimes E$$
which implies that $\sU_{C,\,\omega}$ is of F-split type.
\end{proof}

Let $\{x_1,\,x_2\}\subset C\setminus I$ be two different points, a \emph{generalized parabolic sheaf} (GPS) $(E,Q)$ of rank $r$ and degree $d$ on $C$ consists of
a sheaf $E$ of degree $d$ on $C$, torsion free of rank $r$ outside
$\{x_1,x_2\}$ with parabolic structures at the points of $I$ and an $r$-dimensional quotient
$$E_{x_1}\oplus E_{x_2}\xrightarrow{q} Q\to 0.$$

\begin{defn}\label{defn3.8} A GPS $(E,Q)$ on an irreducible smooth curve $C$ is called \emph{semistable} (resp.,
\emph{stable}), if for every nontrivial subsheaf $E'\subset E$ such that
$E/E'$ is torsion free outside $\{x_1,x_2\},$ we have
$$par\chi(E')-dim(Q^{E'})\leq
r(E')\cdot\frac{par\chi(E)-dim(Q)}{r(E)} \,\quad (\text{resp.,
$<$}),$$ where $Q^{E'}=q(E'_{x_1}\oplus E'_{x_2})\subset Q.$
\end{defn}

\begin{thm}[Theorem X2 of \cite{NR} or Theorem 2.24 of \cite{Su3} for arbitrary rank]\label{thm3.9} For any data $\omega=(k, \{\vec n(x),\,\,\vec a(x)\}_{x\in I})$, there exists
a normal projective variety $\sP_{\omega}$ with at most rational singularities, which is the
coarse  moduli space of $s$-equivalence classes of semi-stable GPS on $C$ with parabolic structures
at the points of $I$ given by the data $\omega$.
\end{thm}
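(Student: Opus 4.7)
The strategy is to realise $\sP_\omega$ as a GIT quotient in close analogy with the construction of $\sU_{C,\omega}$ recalled before Definition \ref{defn3.4}, adding one extra Grassmannian factor to the parameter space to account for the $r$-dimensional quotient $q\colon E_{x_1}\oplus E_{x_2}\twoheadrightarrow Q$. Fix $\sO(1)$, a large integer $N$, $V=\C^{P(N)}$, and the Quot scheme $\bold Q$ as in the parabolic case, chosen so that every semistable GPS $(E,Q)$ with the given numerical invariants becomes, after twisting by $\sO(N)$, a point of $\bold Q$ with $H^1(E(N))=0$ and $H^0(E(N))\cong V$. Over $\bold Q$, form
$$\sR_{GPS}:=\Bigl(\underset{x\in I}{\times_{\bold Q}}\mathrm{Flag}_{\vec n(x)}(\sF_x)\Bigr)\times_{\bold Q}\mathrm{Grass}_r(\sF_{x_1}\oplus\sF_{x_2}),$$
on which $\mathrm{SL}(V)$ acts; let $\sQ$ denote the tautological rank-$r$ quotient bundle on the Grassmannian factor.

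Define a polarisation on $\sR_{GPS}$ by twisting $\Theta_{\sR,\omega}$ by a suitable power of $\det(\sQ)$ and adjusting the normalising exponent $\ell$ so that the resulting line bundle is $\mathrm{SL}(V)$-linearised and descends to the GIT quotient. The key technical step, and the main obstacle, is to verify that the GIT (semi-)stability attached to this polarisation matches the (semi-)stability of Definition \ref{defn3.8}. This reduces to a Hilbert--Mumford calculation: for a 1-PS of $\mathrm{SL}(V)$ arising from a weighted filtration $V_\bullet$ of $V$, its Mumford weight must evaluate to the defining inequality of Definition \ref{defn3.8}, with the $\det(\sQ)$ contribution reproducing precisely the $-\dim(Q^{E'})$ term. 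Conversely, any destabilising 1-PS must come from such a filtration, whose saturation in the universal sheaf produces a destabilising subsheaf; this uses the standard boundedness arguments from the parabolic case, to which the construction reduces once the Grassmannian contribution is isolated as a transparent linear term in the Mumford weight.

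With GIT semistability identified with sheaf semistability, the projective quotient $\sP_\omega:=\sR_{GPS}^{ss}//\mathrm{SL}(V)$ is a coarse moduli space of $s$-equivalence classes of semistable GPS, closed orbits in $\sR_{GPS}^{ss}$ corresponding to polystable GPS. Finally, the open locus of $\sR_{GPS}$ where the underlying sheaf is torsion free away from $\{x_1,x_2\}$ is smooth (it is a flag bundle and a Grassmannian bundle over the smooth locus of $\bold Q$, which contains $\sR_{GPS}^{ss}$ for $N$ large), so the parameter space of semistable points is smooth; Boutot's theorem then forces the reductive quotient $\sP_\omega$ to be normal with at most rational singularities.
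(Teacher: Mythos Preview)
The paper does not give its own proof of Theorem \ref{thm3.9}; it is quoted from \cite{NR} and \cite{Su3}, and immediately after the statement the paper only \emph{recalls} the GIT construction of $\sP_\omega$ (the space you call $\sR_{GPS}$ is exactly the paper's $\wt\sR=Grass_r(\sF_{x_1}\oplus\sF_{x_2})\times_{\bold Q}\sR$). So your sketch is aligned with the construction in the cited references and in the paper's recollection.

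There is, however, a genuine gap in your final paragraph. You assert that the semistable locus is smooth because ``it is a flag bundle and a Grassmannian bundle over the smooth locus of $\bold Q$''. This fails precisely at the points that matter: a semistable GPS $(E,Q)$ is allowed to have torsion at $x_1,x_2$, and at such points the fibre $E_{x_i}=\sF_{x_i}$ jumps in dimension. Hence $\sF_{x_1}\oplus\sF_{x_2}$ is not locally free over that locus and $Grass_r(\sF_{x_1}\oplus\sF_{x_2})$ is \emph{not} a Grassmannian bundle there; the parameter space is not obviously smooth along the torsion stratum. This is exactly why the paper singles out the open set $\sH\subset\wt\sR$ of Notation \ref{nota3.10} (imposing in particular that the torsion injects into $Q$) and quotes from \cite{Su1} that $\sH$ is reduced, normal, Gorenstein with at most rational singularities --- not that it is smooth. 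Once one knows $\wt\sR^{ss}_\omega\subset\sH$ and that $\sH$ is normal with rational singularities, Boutot's theorem (which only needs rational singularities of the source, not smoothness) gives rational singularities of $\sP_\omega$, and normality of the GIT quotient follows from normality of $\sH$. Your argument can be repaired by replacing the smoothness claim with this input from \cite{Su1}.
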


Recall the construction of $\sP_{\omega}$. Let $Grass_r(\sF_{x_1}\oplus\sF_{x_2})\to \bold Q$ and
$$\widetilde{\sR}=Grass_r(\sF_{x_1}\oplus\sF_{x_2})\times_{\bold
Q}\sR\xrightarrow{\rho} \sR.$$  $\omega=(k, \{\vec n(x),\,\,\vec a(x)\}_{x\in I})$ determines
a polarization, which linearizes the ${\rm SL}(V)$-action on $\wt\sR$, such that the open set $\wt{\sR}^{ss}_{\omega}$ (resp. $\wt{\sR}^s_{\omega}$) of
GIT semistable (resp. GIT stable) points are precisely the set of semistable (resp. stable) GPS on $C$ (see \cite{Su3}). Then $\sP_{\omega}$ is the GIT quotient
\ga{3.3} {\wt{\sR}^{ss}_{\omega}\xrightarrow{\psi}\wt{\sR}^{ss}_{\omega}//{\rm SL}(V):=\sP_{\omega}.}

\begin{nota}\label{nota3.10} Let $\sH\subset\wt{\sR}$ be the open subscheme
parametrising the generalised parabolic sheaves $E=(E,E_{x_1} \oplus
E_{x_2}\xrightarrow{q}Q)$ satisfying \begin{itemize}
\item [(1)] the torsion ${\rm Tor}\,E$ of $E$ is
supported on $\{x_1,x_2\}$ and $$q:({\rm Tor}\,E)_{x_1}\oplus ({\rm
Tor}\,E)_{x_2}\hookrightarrow Q$$
\item [(2)] if $N$ is large enough, then
$H^1(E(N)(-x-x_1-x_2))=0$ for all $E$ and $x\in C$.
\end{itemize}
\end{nota}
Then $\sH$ is reduced, normal, Gorenstein with at most rational singularities (see Proposition 3.2 and Remark 3.1 of \cite{Su1}). Moreover,
for any data $\omega$, we have $\wt{\sR}^{ss}_{\omega}\subset \sH$ and, by Lemma 5.7 of \cite{Su1}, there is a morphism ${\rm Det}_{\sH}:\sH\to J^d_C$
which extends determinant morphism on open set $\wt\sR_F\subset\sH$ of locally free sheaves, and induces a flat morphism
\ga{3.4} {{\rm Det}: \sP_{\omega}\to J^d_C.}

\begin{nota}\label{nota3.11} For $L\in J^d_C$, let $\sH^L={\rm Det}^{-1}(L)\subset\sH$,
$$\wt\sR_F^L={\rm Det}^{-1}(L)\subset\wt\sR_F, \quad (\wt\sR_{\omega}^{ss})^L={\rm Det}^{-1}(L)\subset \wt\sR_{\omega}^{ss}.$$
Then $\sP^L_{\omega}={\rm Det}^{-1}(L)\subset \sP_{\omega}$ is the GIT quotient
$$(\wt\sR_{\omega}^{ss})^L\xrightarrow{\psi}\sP^L_{\omega}=(\wt\sR_{\omega}^{ss})^L//{\rm SL}(V).$$
\end{nota}

\begin{prop}[Proposition 5.2 of \cite{Su1}]\label{prop3.12}
Let $\sD_1^f=\hat\sD_1\cup\hat\sD_1^t$
and $\sD_2^f=\hat\sD_2\cup\hat\sD_2^t$, where $\hat\sD_i\subset \wt{\sR}$ is the Zariski closure of
$\hat\sD_{F,\,i}\subset\wt{\sR}_F$ consisting of $(E,Q)\in\wt{\sR}_F$ that $E_{x_i}\to Q$ is not an isomorphism, and
${\hat\sD}_1^t\subset\wt{\sR}$ (rep. ${\hat\sD}_2^t\subset\wt{\sR}$) consists of $(E,Q)\in\wt{\sR}$ such that $E$ is not locally
free at $x_2$ (resp. at $x_1$). Then \begin{itemize}
\item [(1)] ${\rm Codim}(\sH^L\setminus(\wt\sR_{\omega}^{ss})^L)>(r-1)g+\frac{|I|}{k};$
\item [(2)] the complement in $(\wt\sR_{\omega}^{ss})^L\setminus\{\sD_1^f\cup\sD^f_2\}$ of the set
$\wt\sR_{\omega}^{s}$ of stable points has codimension $\ge
(r-1)g+\frac{|I|}{k}$.
\item [(3)] ${\rm Codim}((\wt\sR^{ss}_{\omega})^L\setminus W_{\omega})\ge (r-1)g+\frac{|I|}{k}$, $W_{\omega}\subset (\wt\sR^{ss}_{\omega})^L$ defined by
$$W_{\omega}:=\left\{ (E, Q)\in (\wt\sR_{\omega}^{ss})^L\bigg|
\begin{aligned}
& \forall \,\,E'\subset E \ \ \text{with}\ \ 0<r(E')< r, \ \text{we have} \\& \frac{par\chi(E')-dim(Q^{E'})}{r(E')}<\frac{par\chi(E)-dim(Q)}{r(E)}
\end{aligned}\right\}.$$
\end{itemize}
\end{prop}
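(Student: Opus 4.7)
The plan is to estimate each of the three codimensions by stratifying the locus in question according to the discrete invariants $(r', \chi', \{\vec n^{E'}(x)\}_{x\in I}, \dim Q^{E'})$ of the offending subsheaf $E' \subset E$, realizing each stratum as the image of an incidence scheme assembled from Quot schemes over $C$, flag bundles for the parabolic data, and a Grassmannian parameterizing the GPS quotient $Q$ of $E_{x_1}\oplus E_{x_2}$ compatible with the prescribed quotient of $E'_{x_1}\oplus E'_{x_2}$. The dimensions of these incidence schemes are controlled by Riemann--Roch and the standard dimension formula for Quot schemes on $C$, and the codimension in $\sH^L$ or in $(\wt\sR_\omega^{ss})^L$ is obtained by subtraction.

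First, for (1), any $(E, q) \in \sH^L \setminus (\wt\sR_\omega^{ss})^L$ admits a saturated subsheaf $E' \subsetneq E$ with $E/E'$ torsion-free off $\{x_1, x_2\}$ violating the GPS slope inequality of Definition \ref{defn3.8}. I would parametrize the locus of such pairs by a universal Quot scheme over a relative Picard variety, with the Picard factor recording $\det E'$; fixing $\det E = L$ cuts down exactly one copy of $J^0_C$ on both sides, so the codimension count is unaffected. A direct count, whose leading term $r'(r-r')(g-1)$ comes from $h^1$ of $\Hom(E', E/E')$, combined with the parabolic contribution (from the relative flag bundles) and a Grassmannian correction for $\dim Q^{E'}$, yields that the codimension of each stratum strictly exceeds $(r-1)g + \frac{|I|}{k}$; the infimum of these bounds is approached but not attained at $r' \in \{1, r-1\}$.

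For (2) and (3), the arguments are parallel. In (2) one studies the strictly semistable locus outside $\sD_1^f \cup \sD_2^f$, where $E$ is locally free at $\{x_1, x_2\}$ and each $E_{x_i}\to Q$ is an isomorphism; this forces the Jordan--H{\"o}lder graded object of a strictly semistable $(E, q)$ to split the quotient $Q$ compatibly, so the stratum is parametrized by a fibre product of moduli of semistable graded pieces of equal slope, and the same Riemann--Roch count yields the bound with equality rather than strict inequality. In (3), the subsheaf $E'$ witnessing failure of membership in $W_\omega$ is only required to be a subsheaf of $E$, not to have torsion-free quotient outside $\{x_1,x_2\}$; but possible torsion at $\{x_1, x_2\}$ contributes only finitely many additional parameters and is absorbed by the same estimate, so the bound for (3) follows along the same lines.

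The hard part will be making each stratification uniform across all admissible discrete invariants, especially the incorporation of the integer $\dim Q^{E'}$: the locus of $r$-dimensional quotients of $E_{x_1} \oplus E_{x_2}$ compatible with a prescribed $q'$-dimensional quotient of $E'_{x_1} \oplus E'_{x_2}$ has dimension depending in a piecewise linear way on $q'$, and one must verify that this dependence never allows the overall codimension to dip below the target. Once this book-keeping is done cleanly, each of the three bounds reduces to the extremal case $r' = r-1$, where the estimate amounts to evaluating $h^1$ of a suitable line bundle on $C$ and comparing with the parabolic shift $\frac{|I|}{k}$.
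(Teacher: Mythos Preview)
Your proposal is essentially the right strategy and matches what the paper relies on. In the paper itself the proof is not carried out: statements (1) and (2) are simply identified with Proposition~5.2 of \cite{Su1} (with the extra term $\frac{|I|}{k}$, omitted there, coming from the parabolic refinement you allude to), and (3) is asserted to follow from the same argument that proves (2) in \cite{Su1}. What you sketch---stratification by the discrete invariants $(r',\chi',\{\vec n^{E'}(x)\},\dim Q^{E'})$ of a destabilizing (or strictly semistabilizing) subsheaf, realization of each stratum via an incidence scheme built from Quot schemes, relative flag varieties, and the Grassmannian for the GPS quotient, then a Riemann--Roch dimension count---is exactly the machinery used in \cite{Su1} to obtain such bounds, so you are reconstructing the cited proof rather than offering a genuinely different route.

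One small correction on (3): the condition defining $W_\omega$ tests \emph{all} subsheaves $E'$ of intermediate rank, but a point of $(\wt\sR^{ss}_\omega)^L\setminus W_\omega$ already satisfies the weak inequality for saturated subsheaves, so the offending $E'$ can be taken to realize equality; passing to its saturation (outside $\{x_1,x_2\}$) only improves the parabolic slope while keeping $r(E')$ fixed, and hence you may reduce to the same strata as in (2). It is this reduction, not an extra torsion count, that makes the proof of (2) in \cite{Su1} yield (3) directly---which is precisely what the paper says.
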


\begin{proof} The statements (1) and (2) are contained in Proposition 5.2 of \cite{Su1} (where the term $\frac{|I|}{k}$ was omitted). The proof of Proposition 5.2 (2) in \cite{Su1} implies statement (3) here.

\end{proof}

\begin{prop}\label{prop3.13} Let $\omega_c=(2r, \{\vec n(x),\,\,\vec a_c(x)\}_{x\in I})$ be the data in Proposition \ref{prop3.6}
and $\Theta_{J^d_C}$ be the theta line bundle on $J^d_C$. Assume
\ga{3.5} {(r-1)(g-1)+\frac{|I|}{2r}\ge 2.}
Then there is an ample line bundle $\Theta_{\sP_{\omega_c}}$ on $\sP_{\omega_c}$ such that
$$\omega^{-1}_{\sP_{\omega_c}}=\Theta_{\sP_{\omega_c}}\otimes {\rm Det}^*(\Theta_{J^d_C}^{-1}).$$
In particular, for any $L\in J^d_C$, $\sP^L_{\omega_c}$ is a normal Fano variety with only rational singularities.
\end{prop}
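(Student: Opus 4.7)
The plan is to follow the template of Proposition \ref{prop3.6} on the GPS parameter space $\wt{\sR}=Grass_r(\sF_{x_1}\oplus\sF_{x_2})\times_{\mathbf{Q}}\sR$ in place of $\sR$. Working on the open subscheme $\sH\subset\wt{\sR}$ of Notation \ref{nota3.10}, which is reduced, normal, Gorenstein with rational singularities and whose smooth ${\rm SL}(V)$-invariant locus contains $\wt\sR^{ss}_{\omega_c}$, the first task is to compute the anti-canonical line bundle of $\sH^L$ (equivalently, of $\wt\sR^L_F\cap\sH$) and to identify it, up to a pullback from the Jacobian, with the GIT polarization attached to $\omega_c$.

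The anti-canonical should decompose into three natural contributions. First, the determinant-of-cohomology factor from the Quot piece gives $(\det R\pi_{\wt\sR*}\sE)^{-2r}$; not fixing the determinant produces the correction ${\rm Det}^*(\Theta_{J^d_C}^{-1})$, exactly as in Proposition~2.2 of \cite{Su1} and Proposition~4.2 of \cite{Su3}. Second, the relative anti-canonical of $\sR\to\mathbf{Q}$ contributes $\bigotimes_{x\in I}\bigotimes_{i=1}^{l_x}\det(\sQ_{\{x\}\times\sR,i})^{n_i(x)+n_{i+1}(x)}$, which matches exactly the weights $\vec a_c(x)$ for which $\bar a_{i+1}(x)-\bar a_i(x)=n_i(x)+n_{i+1}(x)$. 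Third, the relative anti-canonical of $Grass_r(\sF_{x_1}\oplus\sF_{x_2})\to\mathbf{Q}$ contributes $\det(\sQ)^{2r}$, where $\sQ$ is the universal rank-$r$ quotient, which reproduces the Grassmannian factor of the polarization at level $k=2r$. Assembling these identifies $\omega^{-1}|_{\wt\sR^L_F\cap\sH}$ with $\Theta_{\wt\sR,\omega_c}\otimes{\rm Det}^*(\Theta_{J^d_C}^{-1})$.

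To descend from $\wt\sR^{ss}_{\omega_c}$ to $\sP_{\omega_c}$ I would apply F.~Knop's descent theorem \cite{Kn}, which requires that the complement of the free ${\rm SL}(V)$-locus in $\wt\sR^{ss}_{\omega_c}$ has codimension at least $2$. The bounds in Proposition \ref{prop3.12} (together with the separate handling of $\sD_1^f\cup\sD_2^f$ in item (2)) give the relevant codimension to be at least $(r-1)g+\frac{|I|}{2r}\ge(r-1)(g-1)+\frac{|I|}{2r}\ge 2$ by hypothesis \eqref{3.5}. Hence the anti-canonical descends to an ample $\Theta_{\sP_{\omega_c}}$ on $\sP_{\omega_c}$ with $\omega^{-1}_{\sP_{\omega_c}}=\Theta_{\sP_{\omega_c}}\otimes{\rm Det}^*(\Theta_{J^d_C}^{-1})$. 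Restricting to a fiber $\sP^L_{\omega_c}={\rm Det}^{-1}(L)$ trivializes the ${\rm Det}^*$ factor, so $\omega^{-1}_{\sP^L_{\omega_c}}$ is the restriction of the ample $\Theta_{\sP_{\omega_c}}$ and is itself ample, yielding the Fano property; normality and rational singularities on the fiber are inherited from $\sP_{\omega_c}$ (Theorem \ref{thm3.9}) via the flat morphism ${\rm Det}$ of \eqref{3.4}.

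The main obstacle is the explicit anti-canonical bookkeeping on $\sH$: one must verify that the passage from $\sR$ to $\wt{\sR}$ introduces exactly $\det(\sQ)^{2r}$ in the relative direction and nothing else, and that together with the parabolic contribution at the points of $I$ this reproduces precisely the polarization $\Theta_{\wt\sR,\omega_c}$ of level $2r$, with the Jacobian correction factoring out cleanly. Once that computation — parallel to Proposition~2.2 of \cite{Su1} — is in place, the remainder of the argument is a verbatim adaptation of Proposition \ref{prop3.6}.
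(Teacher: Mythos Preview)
Your proposal is correct and follows essentially the same approach as the paper. The paper streamlines the argument by citing precomputed results from \cite{Su1}: the anti-canonical formula on $\sH$ is exactly Proposition~3.4 there (resolving what you flag as ``the main obstacle''), the descent of $\hat\Theta_{\omega_c}$ to an ample line bundle on $\sP_{\omega_c}$ is Lemma~2.3 of \cite{Su1}, and the identification $(\psi_*\omega_{\wt\sR^{ss}_{\omega_c}}^{-1})^{inv.}=\omega^{-1}_{\sP_{\omega_c}}$ under the codimension hypothesis is packaged in Lemma~5.6 of \cite{Su1}; the paper notes that both Proposition~\ref{prop3.5} and Proposition~\ref{prop3.12} feed into that lemma, whereas you only invoke the latter and leave the treatment of $\sD_1^f\cup\sD_2^f$ (where the action is not free along a divisor) as a parenthetical---this is the one place where your sketch is thinner than the actual argument, which in the analogous reducible case (Proposition~\ref{prop3.20}) is handled by explicitly exhibiting points with scalar automorphisms on each divisorial component and checking that their images remain divisorial.
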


\begin{proof} Let
$V\otimes\sO_{C\times\sH}(-N)\to \sE\to 0$, $\,\,\sE_{x_1}\oplus\sE_{x_2}\to \sQ\to 0$ and
$$\{\,\,\sE_{\{x\}\times \sH}=\sQ_{\{x\}\times \sH,\,l_x+1}\twoheadrightarrow\sQ_{\{x\}\times \sH,\,l_x}
\twoheadrightarrow \cdots\twoheadrightarrow \sQ_{\{x\}\times\,\sH,1}
\twoheadrightarrow0\,\,\}_{x\in I}$$ be the universal quotients and universal flags. Let $\omega_{C}=\sO(\sum_qq)$ and
$$\Theta_{J^d_C}=(detR\pi_{J^d_{C}}\sL)^{-2}\otimes\sL_{x_1}^r\otimes\sL_{x_2}^r\otimes\sL_y^{2\chi-2r}\otimes\bigotimes_q\sL_q^{r-1}$$
where $\sL$ is the universal line bundle on $C\times
J^d_{C}$. Then we have
$$\aligned&\omega^{-1}_{\sH}=(det\,R\pi_{\sH}\sE)^{-2r}\otimes\\&
\bigotimes_{x\in I}\left\{(det\,\sE_x)^{n_{l_x+1}-r}
\otimes\bigotimes^{l_x}_{i=1}(det\,\sQ_{x,i})
^{n_i(x)+n_{i+1}(x)}\right\}\otimes(det\,\sQ)^{2r}\\&
\otimes(det\,\sE_y)^{2\chi-2r}\otimes{\rm Det}_{\sH}^*(\Theta_{J^d_{C}}^{-1}):=\hat\Theta_{\omega_c}\otimes {\rm Det}_{\sH}^*(\Theta_{J^d_{C}}^{-1})
\endaligned$$ by Proposition 3.4 of \cite{Su1}, and $\hat\Theta_{\omega_c}$ descends to an ample
line bundle $\Theta_{\sP_{\omega_c}}$ on $\sP_{\omega_c}$ (see Lemma 2.3 of \cite{Su1}). Thus
$$(\psi_*\omega_{\wt{\sR}^{ss}_{\omega_c}}^{-1})^{inv.}=\Theta_{\sP_{\omega_c}}\otimes{\rm Det}^*(\Theta_{J^d_C}^{-1}).$$
When condition \eqref{3.5} holds, the lower bounds in Proposition \ref{prop3.5} and Proposition \ref{prop3.12} are at least two. Thus
Lemma 5.6 of \cite{Su1} is applicable (where assumption $g\ge 2$ in Lemma 5.6 of \cite{Su1} is replaced by condition \eqref{3.5})
and $(\psi_*\omega_{\wt{\sR}^{ss}_{\omega_c}}^{-1})^{inv.}=\omega^{-1}_{\sP_{\omega_c}}$.
\end{proof}

\begin{thm}\label{thm3.14} For any data $\omega=(k, \{\vec n(x),\,\,\vec a(x)\}_{x\in I})$, the moduli space
$\sP^L_{\omega}$ is of globally F-regular type.
\end{thm}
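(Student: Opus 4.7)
The plan is to mimic the proof of Theorem \ref{thm3.7}, using the Fano variety $\sP^L_{\omega'_c}$ supplied by Proposition \ref{prop3.13} in the role played there by $\sU^L_{C,\omega_c}$. First I would enlarge the parabolic data: choose a finite set $I'\subset C\setminus(I\cup\{x_1,x_2\})$ of additional smooth points, together with flag types $\vec n(x)$ for $x\in I'$, with $|I'|$ large enough that
\[(r-1)(g-1)+\frac{|I|+|I'|}{2r}\ge 2,\]
and let $\omega'_c=(2r,\{\vec n(x),\vec a_c(x)\}_{x\in I\cup I'})$ denote the resulting critical data in the sense of Proposition \ref{prop3.13}.

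Next I would form the ${\rm SL}(V)$-equivariant flag bundle
\[\wt{\sR}':=\wt{\sR}\times_{\bold Q}\Bigl(\underset{x\in I'}{\times_{\bold Q}}Flag_{\vec n(x)}(\sF_x)\Bigr)\xrightarrow{\hat f}\wt{\sR},\]
which is smooth and proper with flag-variety fibers. Hence $\hat f_*\sO_{\wt\sR'}=\sO_{\wt\sR}$, and $\hat f$ is $p$-compatible since it is flat with coherent higher direct images. Put $\sR=\sH^L\subset\wt\sR$, $\sR'=\hat f^{-1}(\sH^L)\subset\wt\sR'$, and set $\hat Y$ to be the Zariski closure of $\sH^L$ in $\wt\sR$ and $\hat Z=\hat f^{-1}(\hat Y)$. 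Then $\sR$ and $\sR'$ are ${\rm SL}(V)$-invariant normal open subschemes of $\hat Y$ and $\hat Z$ (normality of $\sR$ coming from Notation \ref{nota3.10} and of $\sR'$ from smoothness of $\hat f$), and Proposition \ref{prop3.12}(1) gives $\hat Y^{ss}_{\omega}=(\wt\sR^{ss}_{\omega})^L\subset\sR$ with the analogous inclusion for $\omega'_c$; this secures hypotheses (1) and (2) of Proposition \ref{prop2.10}. For hypothesis (3) I would take $W=W_{\omega'_c}\subset(\wt\sR'^{ss}_{\omega'_c})^L$ as in Proposition \ref{prop3.12}(3) and $\hat X=W\cap\hat f^{-1}((\wt\sR^{ss}_{\omega})^L)$; combining parts (1) and (3) of Proposition \ref{prop3.12} applied to the enlarged data yields
\[{\rm Codim}(\sR'\setminus W)\ge (r-1)g+\frac{|I|+|I'|}{2r}\ge 2,\]
while the saturation identity $\hat X=\varphi^{-1}\varphi(\hat X)$ follows because the strict inequality defining $W_{\omega'_c}$ is exactly the stability condition of Definition \ref{defn3.8}, so each $x\in\hat X$ has closed ${\rm SL}(V)$-orbit and $\varphi^{-1}(\varphi(x))={\rm SL}(V)\cdot x\subset\hat X$ by ${\rm SL}(V)$-equivariance of $\hat f$.

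Since $Z=\sP^L_{\omega'_c}$ is a normal Fano variety with rational singularities by Proposition \ref{prop3.13}, it is of globally $F$-regular type by Proposition \ref{prop2.6}; Proposition \ref{prop2.10} then delivers the same conclusion for $Y=\sP^L_{\omega}$. The main obstacle I anticipate is the verification of the saturation identity $\hat X=\varphi^{-1}\varphi(\hat X)$: one must confirm that the strict inequality of Proposition \ref{prop3.12}(3) for the mixed polarization $\omega'_c$ really does produce closed ${\rm SL}(V)$-orbits in the enlarged GIT semistable locus, so that no stability jumping occurs when grafting the flag-bundle fibers over $I'$ onto the original parameter space.
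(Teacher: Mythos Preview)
Your overall architecture matches the paper's proof exactly: enlarge $I$ to $I\cup I'$, pass to the flag bundle $\wt\sR'\to\wt\sR$, use $\omega_c'$ so that $Z=\sP^L_{\omega_c'}$ is Fano by Proposition~\ref{prop3.13}, take $W=W_{\omega_c'}$, and invoke Proposition~\ref{prop2.10}. The codimension estimates and the $p$-compatibility of $\hat f$ are handled just as the paper does.

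The genuine gap is in your verification of $\hat X=\varphi^{-1}\varphi(\hat X)$. Your claim that ``the strict inequality defining $W_{\omega'_c}$ is exactly the stability condition of Definition~\ref{defn3.8}'' is false: the set $W_{\omega'_c}$ only imposes the strict inequality for subsheaves $E'\subset E$ with $0<r(E')<r$, whereas Definition~\ref{defn3.8} requires it for \emph{all} nontrivial subsheaves with $E/E'$ torsion free outside $\{x_1,x_2\}$, in particular for torsion subsheaves supported at $x_1,x_2$ (which have $r(E')=0$). Hence points of $W$ are typically strictly $\omega_c'$-semistable, their ${\rm SL}(V)$-orbits are \emph{not} closed, and $\varphi^{-1}(\varphi(x))$ is larger than ${\rm SL}(V)\cdot x$.

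The paper resolves this by analyzing the graded object directly (equation~\eqref{3.7}): for $(E,Q)\in W$ one has
\[
gr(E,Q)=(\wt E,\wt Q)\oplus(\,_{x_1}\tau_1\oplus\,_{x_2}\tau_2,\ \tau_1\oplus\tau_2)
\]
with $(\wt E,\wt Q)$ a stable GPS, and then checks that $(E,Q)$ is $\omega$-semistable if and only if $(\wt E,\wt Q)$ is, using the two possible short exact sequences relating $(E,Q)$ to its torsion-free part. This is what shows $\hat X$ is saturated for $\varphi$, and it is precisely the step you flagged as the main obstacle; your proposed shortcut via closed orbits does not work.
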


\begin{proof} Choose a finite subset $I'\subset C\setminus I$ satisfying \eqref{3.5}. Recall that
$$\sR=\underset{x\in I}{\times_{\bold Q}}Flag_{\vec n(x)}(\sF_x),\quad \sR'=\underset{x\in I\cup I'}{\times_{\mathbf{Q}}}
Flag_{\vec n(x)}(\sF_x)\xrightarrow{\hat f} \sR$$ be the projection and
$\widetilde{\sR}=Grass_r(\sF_{x_1}\oplus\sF_{x_2})\times_{\bold Q}\sR\xrightarrow{\rho} \sR$. Let
\ga{3.6} {\wt\sR':=Grass_r(\sF_{x_1}\oplus\sF_{x_2})\times_{\bold Q}\sR'\xrightarrow{\hat f} \wt\sR.}
Then, on $\sH^L\subset \wt\sR$,  it is clear that $(\sH')^L:={\hat f}^{-1}(\sH^L)\xrightarrow{\hat f} \sH^L$ is a ${\rm SL}(V)$-invariant and
$p$-compatible morphism such that $\hat f_*\sO_{(\sH')^L}=\sO_{\sH^L}$.

For any data $\omega=(k, \{\vec n(x),\,\,\vec a(x)\}_{x\in I})$, $\omega_c=(2r, \{\vec n(x),\,\,\vec a_c(x)\}_{x\in I\cup I'})$, we have
$(\wt\sR_{\omega}^{ss})^L\subset\sH^L$, $\,(\wt\sR_{\omega_c}^{\prime\,ss})^L\subset (\sH')^L$. Recall
$$(\wt\sR_{\omega}^{ss})^L\xrightarrow{\psi}\sP^L_{\omega}:=Y,\quad (\wt\sR_{\omega_c}^{\prime\,ss})^L\xrightarrow{\varphi}\sP^L_{\omega_c}:=Z.$$
To apply Proposition \ref{prop2.10}, let $W=W_{\omega_c}\subset (\wt\sR_{\omega_c}^{\prime\,ss})^L$ and
$$\hat X=W\cap\hat f^{-1}((\wt\sR_{\omega}^{ss})^L).$$
By Proposition \ref{prop3.12},  ${\rm Codim}((\sH')^L\setminus W)\ge (r-1)g+\frac{|I|+|I'|}{2r}\ge 2$. Thus it is enough to
check the condition that $\hat X=\varphi^{-1}\varphi(\hat X)$. This is equivalent (see Remark 1.2 of \cite{Su1}) to prove that
\ga{3.7}{\forall\,\,(E, Q)\in (\wt\sR_{\omega_c}^{\prime\,ss})^L,\quad (E,Q)\in \hat X\,\,\,\Leftrightarrow\,\,\, gr(E,Q)\in \hat X.}
In fact, for any $(E, Q)\in (\wt\sR_{\omega_c}^{\prime\,ss})^L$, it is clear that we have
$$(E, Q)\in W \,\,\,\Leftrightarrow\,\,\,gr(E,Q)=(\wt E, \wt Q)\oplus (\,_{x_1}\tau_1\oplus\,_{x_2}\tau_2, \tau_1\oplus\tau_2)\in W$$
where $(\wt E,\wt Q)$ is a stable GPS (see Definition 1.5 of \cite{Su1}). Thus either
$$0\to (\,_{x_1}\tau_1\oplus\,_{x_2}\tau_2, \tau_1\oplus\tau_2)\to (E,Q)\to (\wt E,\wt Q)\to 0$$
or $0\to (E',Q')\to (E, Q)\to (\,_{x_i}\mathbb{C},\mathbb{C})\to 0$. Then $(E,Q)$ is semi-stable (respect to $\omega$) if and only if
$(\wt E,\wt Q)$ is semi-stable (respect to $\omega$). Thus \eqref{3.7} is proved and we are done.
\end{proof}

When $C=C_1\cup C_2$ is reducible with two smooth irreducible
components $C_1$ and $C_2$ of genus $g_1$ and $g_2$ meeting at only
one point $x_0$ (which is the only node of $C$), we fix an ample
line bundle $\sO(1)$ of degree $c$ on $C$ such that
$deg(\sO(1)|_{C_i})=c_i>0$ ($i=1,2$). For any coherent sheaf $E$,
$P(E,n):=\chi(E(n))$ denotes its Hilbert polynomial, which has degree
$1$. We define the rank of $E$ to be
$$r(E):=\frac{1}{deg(\sO(1))}\cdot \lim \limits_{n\to\infty}\frac{P(E,n)}
{n}.$$ Let $r_i$ denote the rank of the restriction of $E$ to $C_i$
($i=1,2$), then
$$P(E,n)=(c_1r_1+c_2r_2)n+\chi(E),\quad r(E)=
\frac{c_1}{c_1+c_2}r_1+\frac{c_2}{c_1+c_2}r_2.$$ We say that $E$ is
of rank $r$ on $X$ if $r_1=r_2=r$, otherwise it will be said of rank
$(r_1,r_2)$.

Fix a finite set $I=I_1\cup I_2$ of smooth points on $C$, where
$I_i=\{x\in I\,|\,x\in C_i\}$ ($i=1,2$), and parabolic data $\omega=\{k,\vec
n(x),\vec a(x)\}_{x\in I}$ with
$$\ell:=\frac{k\chi-\sum_{x\in I}\sum^{l_x}_{i=1}d_i(x)r_i(x)}{r}$$
(recall $d_i(x)=a_{i+1}(x)-a_i(x)$, $r_i(x)=n_1(x)+\cdots+n_i(x)$). Let
\ga{3.8}{n^{\omega}_j=\frac{1}{k}\left(r\frac{c_j}{c_1+c_2}\ell+\sum_{x\in
I_j}\sum^{l_x}_{i=1}d_i(x)r_i(x)\right)\,\,\,(j=1,\,\,2).}

\begin{defn}\label{defn3.15} For any coherent sheaf $F$ of rank $(r_1,r_2)$, let
$$m(F):= \frac{r(F)-r_1}{k}\sum_{x\in I_1}a_{l_x+1}(x)+
\frac{r(F)-r_2}{k}\sum_{x\in I_2}a_{l_x+1}(x),$$ the modified
parabolic Euler characteristic and slop of $F$ are
$${\rm par}\chi_m(F):={\rm par}\chi(F)+m(F),\quad {\rm par}\mu_m(F):=\frac{{\rm par}\chi_m(F)}{r(F)}.$$
A parabolic sheaf $E$ is called semistable (resp. stable) if, for
any subsheaf $F\subset E$ such $E/F$ is torsion free, one has, with
the induced parabolic structure,
$${\rm par}\chi_m(F)\le \frac{{\rm par}\chi_m(E)}{r(E)}r(F)\quad (resp.<).$$
\end{defn}

\begin{thm}[Theorem 1.1 of \cite{Su2} or Theorem 2.14 of \cite{Su3}]\label{thm3.16} There
exists a reduced, seminormal projective scheme
$$\sU_C:=\sU_C(r,d,\sO(1),
\{k,\vec n(x),\vec a(x)\}_{x\in I_1\cup I_2})$$ which is the coarse
moduli space of $s$-equivalence classes of semistable parabolic
sheaves $E$ of rank $r$ and $\chi(E)=\chi=d+r(1-g)$ with parabolic structures
of type $\{\vec n(x)\}_{x\in I}$ and weights $\{\vec a(x)\}_{x\in
I}$ at points $\{x\}_{x\in I}$. The moduli space $\sU_C$ has at most
$r+1$ irreducible components.
\end{thm}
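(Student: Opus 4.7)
The plan is to construct $\sU_C$ as a GIT quotient of an enlarged Quot-flag scheme, adapting the constructions for smooth curves (Theorem \ref{thm3.3}) and for irreducible nodal curves (Theorem \ref{thm3.9}) to the reducible nodal case. The main new feature is the modification $m(F)$ in Definition \ref{defn3.15}, which is forced on us because the restrictions $\sO(1)|_{C_1}$ and $\sO(1)|_{C_2}$ have unequal degrees $c_1,c_2$.

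First I would establish boundedness of the family of semistable parabolic sheaves of rank $r$, Euler characteristic $\chi$ and parabolic type $\{\vec n(x)\}_{x\in I}$, by a Maruyama-type argument in which the modified slope ${\rm par}\mu_m$ controls the slopes of Harder--Narasimhan subsheaves on each component. Fix $N\gg 0$ so that every such semistable $E$ satisfies $H^1(E(N))=0$ and $E(N)$ is globally generated, set $V=k^{P(N)}$, and let $\mathbf{Q}$ be the Quot scheme of quotients $V\otimes\sO_C(-N)\twoheadrightarrow F$ with $P(m)=(c_1+c_2)rm+\chi$. Form
\[
\sR=\underset{x\in I}{\times_{\mathbf{Q}}}Flag_{\vec n(x)}(\sF_x),
\]
with the natural $SL(V)$-action, and define a polarization $\Theta_{\sR,\omega}$ built from the determinant of cohomology ${\rm det}\,R\pi_{\sR}\sE$, the flag tautological bundles $\det\sQ_{x,i}^{d_i(x)}$, and correction factors of the form $\det(\sE|_{C_j\times\sR})^{e_j}$ with exponents $e_j$ proportional to $\sum_{x\in I_j}a_{l_x+1}(x)$.

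The main step is the Hilbert--Mumford check that GIT semistability with respect to $\Theta_{\sR,\omega}$ coincides with the modified parabolic semistability of Definition \ref{defn3.15}. Given a 1-PS $\lambda$ of $SL(V)$, one decomposes $V$ into weight spaces, obtains a filtration $0\subset E_1\subset\cdots\subset E_s=E$ of the corresponding quotient, and computes the total weight of $\lambda$ on the fibre of $\Theta_{\sR,\omega}$ at the limit of $\lambda$; the correction exponents $e_j$ are chosen precisely so that this weight is a positive multiple of ${\rm par}\chi_m(E)\,r(E_j)-{\rm par}\chi_m(E_j)\,r(E)$, matching the inequality in Definition \ref{defn3.15}. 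One then sets $\sU_C:=\sR^{ss}_\omega//SL(V)$: projectivity is immediate from GIT, and reducedness and seminormality follow because $\sR$ is reduced and seminormal (the Quot scheme over the seminormal curve $C$ is seminormal on the open locus satisfying the vanishing above) and $SL(V)$-invariants of a seminormal ring are seminormal.

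For the bound on irreducible components, at the node $x_0$ a torsion-free rank-$r$ sheaf $E$ has a unique local-analytic decomposition $\sO_{C,x_0}^{\,a}\oplus\mathfrak{m}_{x_0}^{\,r-a}$ for some integer $a\in\{0,1,\ldots,r\}$, and the invariant $a(E)$ is locally constant in flat families; this stratifies $\sR$, and hence $\sU_C$, into at most $r+1$ open-and-closed strata. The main obstacle is the Hilbert--Mumford computation above: the unequal degrees $c_1,c_2$ mean that a 1-PS of $SL(V)$ contributes asymmetrically from the $C_1$- and $C_2$-parts of $E$, and it is precisely the correction $m(F)$ that compensates for this asymmetry, together with the extra subtleties arising when $E$ fails to be locally free at $x_0$.
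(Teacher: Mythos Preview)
The paper does not prove Theorem~\ref{thm3.16}; it is quoted from \cite{Su2} (Theorem~1.1) and \cite{Su3} (Theorem~2.14). Your GIT outline is broadly in line with how those references proceed, and the modification term $m(F)$ does indeed arise from the asymmetry $c_1\neq c_2$ in the Hilbert--Mumford calculation.

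However, your argument for the bound on irreducible components is incorrect. The local invariant $a(E)$ (where $E$ is locally $\sO^{a}\oplus\mathfrak{m}^{r-a}$ at $x_0$) is \emph{not} locally constant in flat families: a locally free sheaf can degenerate to a non-locally-free one, so $a$ is only upper semicontinuous. In particular the strata by $a$ are neither open-and-closed nor irreducible. Note that for an \emph{irreducible} one-nodal curve the same local classification holds, yet $\sU_{C,\omega}$ is irreducible there (Theorem~\ref{thm3.3}); so $a$ cannot be the source of the components.

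The correct invariant is the bidegree. On the open locus where $E$ is locally free, the restriction $E|_{C_1}$ is a vector bundle of some Euler characteristic $\chi_1$, and \emph{this} is locally constant in flat families of vector bundles on $C$. Semistability, applied to the subsheaves of rank $(r,0)$ and $(0,r)$ supported on a single component, forces $n_1^{\omega}\le\chi_1\le n_1^{\omega}+r$, giving at most $r+1$ integer values of $\chi_1$. Each such stratum of the locally free locus is irreducible, and its closure is an irreducible component of $\sU_C$. The components genuinely meet along the non-locally-free locus: already in rank one, the sheaf $\pi_*(L_1,L_2)$ with $\deg L_i=e_i$ lies in the closure of both the $(e_1{+}1,e_2)$ and the $(e_1,e_2{+}1)$ line-bundle strata. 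This is exactly why Theorem~\ref{thm3.18} describes the \emph{normalization} $\sP_\omega$ as a disjoint union of the $\sP_{\chi_1,\chi_2}$, whereas $\sU_C$ itself is only seminormal with intersecting components.
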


The normalization of $\sU_C$ is a moduli space of semistable GPS on $\wt C=C_1\bigsqcup C_2$ with parabolic structures at points $x\in I$. Recall

\begin{defn}\label{defn3.17} A GPS $(E,E_{x_1}\oplus E_{x_2}\xrightarrow{q}Q)$ is called semistable (resp.,
stable), if for every nontrivial subsheaf $E'\subset E$ such that
$E/E'$ is torsion free outside $\{x_1,x_2\},$ we have, with the
induced parabolic structures at points $\{x\}_{x\in I}$,
$$par\chi_m(E')-dim(Q^{E'})\leq
r(E')\cdot\frac{par\chi_m(E)-dim(Q)}{r(E)} \,\quad (\text{resp.,
$<$}),$$ where $Q^{E'}=q(E'_{x_1}\oplus E'_{x_2})\subset Q.$
\end{defn}

\begin{thm}[Theorem 2.1 of \cite{Su2} or Theorem 2.26 of \cite{Su3}]\label{thm3.18} For any data $\omega=(\{k,\vec n(x),\,\,\vec a(x)\}_{x\in I_1\cup I_2},\sO(1))$,
the coarse  moduli space $\sP_{\omega}$ of $s$-equivalence classes of semi-stable GPS on $\wt C$ with parabolic structures
at the points of $I$ given by the data $\omega$ is a disjoint union of at most $r+1$ irreducible, normal projective varieties $\sP_{\chi_1,\chi_2}$
( $\chi_1+\chi_2=\chi+r$, $n_j^{\omega}\le\chi_j\le n_j^{\omega}+r$) with at most rational singularities.
\end{thm}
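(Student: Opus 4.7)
The plan is to stratify the parameter space of semistable GPS on $\wt{C}=C_1\sqcup C_2$ by the discrete invariants
$$\chi_j := \chi(E|_{C_j}), \quad j=1,2,$$
which are locally constant in flat families because $\wt{C}$ is disconnected, so any coherent sheaf on $\wt{C}$ splits canonically as $E=E_1\oplus E_2$ with $E_j$ supported on $C_j$. Each pair $(\chi_1,\chi_2)$ in the allowed range will produce one irreducible component $\sP_{\chi_1,\chi_2}$, and every semistable GPS will belong to exactly one stratum.

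The first step is to derive the relation $\chi_1+\chi_2=\chi+r$. Under the normalization map $\pi:\wt{C}\to C$, a GPS $(E,Q)$ on $\wt C$ descends to a sheaf $\pi_*(E,Q)$ on $C$ of Euler characteristic $\chi_1+\chi_2-\dim Q$; since $\dim Q=r$ and the fixed total Euler characteristic on $C$ is $\chi$, the identity follows. The second step is to derive the bounds $n_j^{\omega}\le\chi_j\le n_j^{\omega}+r$. Observe first that the definition of $\ell$ together with \eqref{3.8} gives $n_1^{\omega}+n_2^{\omega}=\chi$, hence the two constraints
$$\chi_1+\chi_2 = \chi+r = n_1^{\omega}+n_2^{\omega}+r, \qquad 0\le \chi_j-n_j^{\omega}\le r$$
leave at most $r+1$ possibilities for $(\chi_1,\chi_2)$. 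To produce the inequalities from semistability, apply Definition \ref{defn3.17} to the subsheaf $E_j\subset E$ (extension by zero from $C_j$). Because $E_j$ lives only on $C_j$, the induced subobject $Q^{E_j}\subset Q$ equals the image of $(E_j)_{x_j}\to Q$; one checks by elementary computation that the modified slope inequality $\mathrm{par}\chi_m(E_j)-\dim Q^{E_j}\le \frac{r(E_j)}{r(E)}(\mathrm{par}\chi_m(E)-\dim Q)$ is equivalent, after substituting \eqref{3.8}, to $\chi_j\le n_j^{\omega}+r$ and $\chi_j\ge n_j^{\omega}$.

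For the third step, fix $(\chi_1,\chi_2)$ in the allowed range and construct the parameter scheme exactly as in the smooth irreducible case: take Quot schemes $\mathbf{Q}_j$ on $C_j$ with the prescribed $\chi_j$, form the product $\mathbf{Q}_1\times\mathbf{Q}_2$, pull back the universal quotients $\sF_j$, build the relative Grassmannian $\mathrm{Grass}_r(\sF_{x_1}\oplus\sF_{x_2})$ for $Q$, and finally the relative flag varieties $\underset{x\in I}{\times}\mathrm{Flag}_{\vec n(x)}(\sF_x)$ for the parabolic data, yielding a smooth irreducible parameter space $\wt{\sR}_{\chi_1,\chi_2}$. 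The data $\omega$ determines a polarization (modified by the $m(F)$ correction in Definition \ref{defn3.15}) so that GIT semistability matches Definition \ref{defn3.17}; this is exactly the polarization of \cite{Su2}/\cite{Su3}. The GIT quotient
$$\sP_{\chi_1,\chi_2} := \wt{\sR}^{ss}_{\chi_1,\chi_2}//\mathrm{SL}(V)$$
is then a normal irreducible projective variety. Rational singularities follow as in Theorem \ref{thm3.9}: the parameter space is smooth (or at least Gorenstein with rational singularities as in Notation \ref{nota3.10}), and a good quotient of such a variety by a reductive group inherits rational singularities by Boutot's theorem.

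The main obstacle is Step 2: extracting the precise bounds $n_j^{\omega}\le\chi_j\le n_j^{\omega}+r$ from semistability requires careful bookkeeping of the modified parabolic Euler characteristic $\chi_m$, the correction term $m(F)$ (which depends on the rank profile $(r_1,r_2)$), and the induced GPS quotient $Q^{E_j}$ whose dimension varies according to whether the component $(E_j)_{x_j}\to Q$ is surjective, injective, or neither. The other steps are essentially formal consequences of the disconnectedness of $\wt{C}$ and the standard GIT construction already used in the irreducible case.
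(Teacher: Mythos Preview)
The paper does not give its own proof of this theorem; it is quoted verbatim from \cite{Su2} and \cite{Su3}, and the text immediately following the statement only \emph{recalls} the construction of $\sP_{\chi_1,\chi_2}$ as a GIT quotient. Your proposal is in fact a faithful outline of that construction and of the argument in the cited references: stratify by $(\chi_1,\chi_2)$, which is legitimate because $\wt C=C_1\sqcup C_2$ is disconnected; obtain $\chi_1+\chi_2=\chi+r$ from the gluing/pushforward; bound $\chi_j$ by applying the semistability inequality of Definition~\ref{defn3.17} to the rank-$(r,0)$ and $(0,r)$ subsheaves $E_1,E_2$; and then run GIT on a product of Quot schemes crossed with flags and the Grassmannian, invoking Boutot for rational singularities. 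So your plan is essentially the same as the one underlying the cited result.

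One concrete correction: the reductive group acting on the parameter space $\wt\sR$ is \emph{not} $\mathrm{SL}(V)$ but rather
\[
G=(\mathrm{GL}(V_1)\times\mathrm{GL}(V_2))\cap\mathrm{SL}(V_1\oplus V_2),
\]
as the paper states just after the theorem. This matters because the two Quot schemes $\mathbf{Q}_1,\mathbf{Q}_2$ carry independent $\mathrm{GL}(V_j)$-actions, and only this product group (cut down by the determinant-one condition) acts on the whole $\wt\sR$; a single $\mathrm{SL}(V)$ would not even act. Your Step~2 bookkeeping is indeed the delicate point, but your identification of it as a computation with $m(F)$ and the induced $Q^{E_j}$ is correct; after unwinding Definition~\ref{defn3.15} and \eqref{3.8} for the rank-$(r,0)$ subsheaf $E_1$, the semistability inequality collapses exactly to $\chi_1\le n_1^{\omega}+r$, and the quotient sheaf (or equivalently $E_2$) gives the lower bound.
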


For fixed $\chi_1$, $\chi_2$ satisfying $\chi_1+\chi_2=\chi+r$ and $n_j^{\omega}\le\chi_j\le n_j^{\omega}+r$ ($j=1,\,2$), recall the construction
of $\sP_{\omega}=\sP_{\chi_1,\chi_2}$. Let
$$P_i(m)=c_irm+\chi_i,\quad \sW_i=\sO_{C_i}(-N),\quad V_i=\Bbb
C^{P_i(N)}$$ where
$\sO_{C_i}(1)=\sO (1)|_{C_i}$ has degree $c_i$. Consider the Quot schemes $\textbf{Q}_i=Quot(V_i\otimes\sW_i,
P_i)$, the universal quotient
$V_i\otimes\sW_i\to \sF^i\to 0$ on $C_i\times \textbf{Q}_i$ and
the relative flag scheme
$$\sR_i=\underset{x\in I_i}{\times_{\textbf{Q}_i}}
Flag_{\vec n(x)}(\sF^i_x)\to \textbf{Q}_i.$$ Let
$\sF=\sF^1\oplus\sF^2$ denote direct sum of pullbacks of $\sF^1$,
$\sF^2$ on $$\wt C\times
(\textbf{Q}_1\times\textbf{Q}_2)=(C_1\times\textbf{Q}_1)\sqcup(C_2\times\textbf{Q}_2).$$
Let $\sE$ be the pullback of $\sF$ to $\wt
C\times(\sR_1\times\sR_2)$, and
$$\rho:\widetilde{\sR}=Grass_r(\sE_{x_1}\oplus\sE_{x_2})\to\sR=\sR_1\times\sR_2\to
\textbf{Q}=\textbf{Q}_1\times\textbf{Q}_2.$$
For the given $\omega=(\{k,\vec n(x),\,\vec a(x)\}_{x\in I_1\cup I_2},\mathcal{O}(1))$, let
$\wt{\sR}_{\omega}^{ss}$ (resp. $\wt{\sR}_{\omega}^{s}$) denote the open set of GIT semi-stable (resp. GIT stable) points under
action of $G=({\rm GL}(V_1)\times {\rm GL}(V_2))\cap {SL}(V_1\oplus V_2)$ on $\wt\sR$ respect to the polarization determined by $\omega$.
Let $\sH\subset\wt{\sR}$ be the open set defined in Notation \ref{nota3.10}, then for any data $\omega$ we have
$$\wt{\sR}_{\omega}^{s}\subset\wt{\sR}_{\omega}^{ss}\subset\sH.$$
The moduli space in Theorem \ref{thm3.18} is nothing but the GIT quotient
$$\psi:\wt{\sR}_{\omega}^{ss}\to \sP_{\omega}:=\wt{\sR}_{\omega}^{ss}//G.$$

There exists a morphism $\hat{\rm Det}_{\sH}: \sH\to J^d_{\wt C}=J^{d_1}_{C_1}\times J^{d_2}_{C_2}$, which extends
$$\hat{\rm Det}_{\sH_F}: \sH_F\to J^{d_1}_{C_1}\times J^{d_2}_{C_2},\quad (E,Q)\mapsto ({\rm det}(E|_{C_1}), {\rm det}(E|_{C_2}))$$
on the open set $\sH_F\subset \sH$ of GPB (i.e. GPS $(E,Q)$ with $E$ locally free) and induces a flat determinant morphism
$${\rm Det}_{\sP_{\omega}}:\sP_{\omega}\to J^d_{\wt C}=J^{d_1}_{C_1}\times J^{d_2}_{C_2}$$
(see page 46 of \cite{Su3} for detail). In fact, for any $L\in J^d_{\wt C}=J^{d_1}_{C_1}\times J^{d_2}_{C_2}$, let
\ga{3.9} {\sP_{\omega}^L:={\rm Det}_{\sP_{\omega}}^{-1}(L)\subset \sP_{\omega}}
and note that abelian variety $J^0_C=J^0_{C_1}\times J^0_{C_2}$ acts on $\sP_{\omega}$, the induced morphism
$\sP^L_{\omega}\times J^0_X\to \sP_{\omega}$ is a finite cover (see the proof of Lemma 6.6 in \cite{Su3}). Similarly, let $\sH^L=\hat{\rm Det}_{\sH}^{-1}(L)$ and $(\wt{\sR}_{\omega}^{ss})^L=\wt{\sR}_{\omega}^{ss}\cap \sH^L$, then
$$\psi:(\wt{\sR}_{\omega}^{ss})^L\to \sP^L_{\omega}=(\wt{\sR}_{\omega}^{ss})^L//G.$$

We do not have good estimate of ${\rm Codim}(\sH\setminus \wt{\sR}_{\omega}^{ss})$ since sub-sheaves $(E_1, Tor(E_2))$, $(Tor(E_1), E_2)$
of $E=(E_1,E_2)$ with rank $(r,0)$, $(0,r)$ may destroy semi-stability of $(E,Q)$ where $Tor(E_i)\subset E_i$ ($i=1,\,2$) are torsion sub-sheaves.
But we have estimate of ${\rm Codim}(\sH_{\omega}\setminus \wt{\sR}_{\omega}^{ss})$, where
$$\sH_{\omega}=\left\{\aligned&\text{$(E,Q)\in\sH$, with
$n_j^{\omega}\le \chi(E_j)=\chi_j\le n_j^{\omega}+r$ ($j=1,\,2$), and}\\&\text{${\rm dim}({\rm Tor}(E_1))\le n^{\omega}_2+r-\chi_2$, $\,\,{\rm dim}({\rm Tor}(E_2))\le n^{\omega}_1+r-\chi_1$}\endaligned\right\}.$$

\begin{prop}\label{prop3.19}
Let $\sD_1^f=\hat\sD_1\cup\hat\sD_1^t$
and $\sD_2^f=\hat\sD_2\cup\hat\sD_2^t$, where $\hat\sD_i\subset \wt{\sR}$ is the Zariski closure of
$\hat\sD_{F,\,i}\subset\wt{\sR}_F$ consisting of $(E,Q)\in\wt{\sR}_F$ that $E_{x_i}\to Q$ is not an isomorphism, and
${\hat\sD}_1^t\subset\wt{\sR}$ (rep. ${\hat\sD}_2^t\subset\wt{\sR}$) consists of $(E,Q)\in\wt{\sR}$ such that $E$ is not locally
free at $x_2$ (resp. at $x_1$). Then \begin{itemize}
\item [(1)] ${\rm Codim}(\sH_{\omega}^L\setminus(\wt\sR_{\omega}^{ss})^L)>
\underset{1\le i\le 2}{{\rm min}}\left\{(r-1)(g_i-\frac{r+3}{4})+\frac{|I_i|}{k}\right\};$
\item [(2)] ${\rm Codim}((\wt\sR_{\omega}^{ss})^L\setminus\{\sD_1^f\cup\sD^f_2\}\setminus(\wt\sR_{\omega}^{s})^L)>\underset{1\le i\le 2}{{\rm min}}\left\{(r-1)(g_i-1)+\frac{|I_i|}{k}\right\}$ when $n_1^{\omega}<\chi_1<n_1^{\omega}+r$;
\item [(3)] ${\rm Codim}((\wt\sR_{\omega}^{ss})^L\setminus\{\sD_1^f\cup\sD^f_2\}\setminus W_{\omega})\ge\underset{1\le i\le 2}{{\rm min}}\left\{(r-1)(g_i-1)+\frac{|I_i|}{k}\right\}$
when $\chi_1=n_1^{\omega}$ or $n_1^{\omega}+r$, where
$$W_{\omega}:=\left\{(E,Q)\in(\wt\sR_{\omega}^{ss})^L \bigg|\begin{aligned}
& \text{$\frac{par\chi(E')-dim(Q^{E'})}{r(E')}<\frac{par\chi(E)-dim(Q)}{r(E)}$}\\
&\text{$\forall$ $E'\subset E$ of rank $(r_1,r_2)\neq (0,r)$, $(r,0)$, $(0,0)$}\end{aligned}\right\};$$
\item [(4)] ${\rm Codim}((\wt\sR^{ss}_{\omega})^L\setminus W_{\omega})\ge \underset{1\le i\le 2}{{\rm min}}\left\{(r-1)(g_i-\frac{r+3}{4})+\frac{|I_i|}{k}\right\}$.
\end{itemize}
\end{prop}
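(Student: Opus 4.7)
The plan is to follow closely the strategy of Proposition 5.2 of \cite{Su2}, which obtained essentially these bounds but omitted the parabolic contribution $|I_i|/k$. The task is to stratify the relevant bad loci by Harder--Narasimhan/Jordan--H\"older filtration type of the underlying (generalized parabolic) sheaf, parameterize each stratum as a fibration over products of smaller (parabolic) Quot/Grassmann schemes, compute the dimensions carefully while keeping track of the parabolic weights, and then compare with $\dim \sH^L$.

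First I would prove (1). Write $\sH_\omega^L\setminus (\wt\sR_\omega^{ss})^L$ as a union of locally closed strata $\sH_\omega^L(F)$ indexed by the types $(r_1',r_2',\chi',\{\vec n'(x)\}_{x\in I})$ of a maximal destabilizing sub-GPS $(F,Q^F)\subset (E,Q)$. For each such type, parameterize the stratum as a fibration whose base is a product of two parabolic Quot schemes (one for the sub-object, one for the quotient, on each component $C_i$) equipped with compatible flag bundles, and whose fibers parameterize extensions and the refinement of $Q^F\hookrightarrow Q$. A direct count, imitating the arithmetic in \cite[\S 5]{Su2} but carrying the term $\frac{1}{k}\sum_{x\in I_i}\sum_j a_j(x)n_j(x)$ through the expression for ${\rm par}\chi_m$, shows that the codimension of each such stratum in $\sH_\omega^L$ is strictly greater than the minimum over $i\in\{1,2\}$ of $(r-1)(g_i-\frac{r+3}{4})+\frac{|I_i|}{k}$; the ``$-\frac{r+3}{4}$'' correction accounts for strata lying in $\sD_1^f\cup\sD_2^f$, where the torsion of $E$ at the nodes is bounded by the definition of $\sH_\omega$ (i.e.\ $\dim {\rm Tor}(E_j)\le n^\omega_{3-j}+r-\chi_{3-j}$).

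Next, for (2) and (3), I would restrict to the open set $\sH_\omega^L\setminus (\sD_1^f\cup \sD_2^f)$, on which both evaluations $E_{x_i}\to Q$ are isomorphisms. On this locus the stability of the GPS is equivalent to the parabolic stability of $E$ with the natural induced structure, so the stratification is by the JH-type of strictly semistable bundles. The restriction that we are away from $\sD_1^f\cup\sD_2^f$ rules out the pathological strata of rank $(r,0)$ or $(0,r)$, replacing the correction $\frac{r+3}{4}$ by $1$; hence the stronger bound $(r-1)(g_i-1)+\frac{|I_i|}{k}$. For (2), when $n_1^\omega<\chi_1<n_1^\omega+r$, the rank--$(0,r)$ and rank--$(r,0)$ sub-GPS cannot be allowed, so they are excluded from the start. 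For (3), when $\chi_1=n_1^\omega$ or $n_1^\omega+r$, one must also quotient out the locus where the unique extremal sub-GPS of rank $(r,0)$ or $(0,r)$ occurs; this imposes a parabolic-stability condition on at least one component $C_i$, again of codimension $(r-1)(g_i-1)+|I_i|/k$ on that component.

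Finally, (4) follows by combining (1) with the global version of the codimension estimate and reintroducing the $\sD_1^f\cup \sD_2^f$ strata. On $\sD_i^f$, the sheaf is not locally free at a node, and the bound ${\rm dim}({\rm Tor}(E_j))\le n^{\omega}_{3-j}+r-\chi_{3-j}$ built into the definition of $\sH_\omega$ again yields the weaker correction $(r-1)(g_i-\frac{r+3}{4})$. The main obstacle will be the bookkeeping in the boundary cases $\chi_i = n_i^\omega$ or $n_i^\omega + r$ of part (3) and in part (4): one must rule out, stratum by stratum, the sub-GPS that simultaneously have extremal bi-rank and saturate one of the two parabolic inequalities, showing that each such stratum is cut out by at least the claimed codimension. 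Once this is organized exactly as in \cite[Prop.~5.2]{Su2} and the missing $|I_i|/k$ contributions are tracked through every inequality of the form ${\rm par}\chi_m(F)\le\frac{{\rm par}\chi_m(E)}{r(E)}r(F)$, all four statements drop out.
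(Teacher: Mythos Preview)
Your outline is essentially the right approach, and it matches what the paper does in spirit; the paper's own proof is simply a one-line citation: statements (1), (2), (3) are declared to be reformulations of Proposition~6.3 of \cite{Su3} (not Proposition~5.2 of \cite{Su2} as you cite), and (4) is said to follow from the proof of that same proposition together with Remark~6.7(2) of \cite{Su3}. So the stratification by destabilizing sub-GPS type, the dimension count over products of smaller parabolic Quot/flag schemes, and the tracking of the parabolic contribution $|I_i|/k$ through the inequalities for ${\rm par}\chi_m$ is exactly the content being invoked---you are sketching what is inside the black box rather than giving a different argument.

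One small correction beyond the citation: you say the earlier reference ``omitted the parabolic contribution $|I_i|/k$''; that comment applies to \cite{Su1} in the irreducible case (cf.\ the proof of Proposition~\ref{prop3.12} here), whereas for the reducible case the paper treats Proposition~6.3 of \cite{Su3} as already containing the needed bounds (with (4) requiring only the extra remark). Otherwise your breakdown of where the corrections $-\frac{r+3}{4}$ versus $-1$ come from, and why the cases $n_1^\omega<\chi_1<n_1^\omega+r$ versus the boundary values behave differently, is consistent with how the argument is organized in \cite{Su3}.
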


\begin{proof} The statements (1), (2) and (3) are in fact reformulations of Proposition 6.3 in \cite{Su3} where determinants are not fixed. (4) follows the proof of
Proposition 6.3 in \cite{Su3} (see Remark 6.7 (2) of \cite{Su3}).
\end{proof}

\begin{prop}\label{prop3.20} For any $\omega$, let $\wt{\sR}_{\omega}^{ss}\xrightarrow{\psi} \sP_{\omega}:=\wt{\sR}_{\omega}^{ss}//G$ and assume
$$\underset{1\le i\le 2}{{\rm min}}\left\{(r-1)(g_i-\frac{r+3}{4})+\frac{|I_i|}{k}\right\}\ge 2.$$
Then $(\psi_*\omega_{\wt{\sR}_{\omega}^{ss}})^{inv.}=\omega_{\sP_{\omega}}$. For
$\omega_c=(2r,\{\vec n(x),\,\vec a_c(x)\}_{x\in I_1\cup I_2})$ satisfying
$$\underset{1\le i\le 2}{{\rm min}}\left\{(r-1)(g_i-\frac{r+3}{4})+\frac{|I_i|}{2r}\right\}\ge 2,$$
there is an ample line bundle $\Theta_{\sP_{\omega_c}}$ on $\sP_{\omega_c}$ such that
$$\omega^{-1}_{\sP_{\omega_c}}=\Theta_{\sP_{\omega_c}}\otimes {\rm Det}_{\sP_{\omega_c}}^*(\Theta_{J^d_{\wt C}}^{-1}).$$
In particular, for any $L\in J^d_{\wt C}$, $\sP^L_{\omega_c}$ is a normal Fano variety with only rational singularities.
\end{prop}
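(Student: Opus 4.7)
The plan is to follow the approach of Proposition~\ref{prop3.13}, with the codimension estimates in Proposition~\ref{prop3.19} replacing those of Proposition~\ref{prop3.12}. The proof splits naturally into two parts: a descent argument giving $(\psi_*\omega_{\wt{\sR}_{\omega}^{ss}})^{inv.}=\omega_{\sP_{\omega}}$, and an explicit identification, valid on the parameter space $\sH$, of $\omega^{-1}_{\sH}$ as the tensor product of a polarization determined by $\omega_c$ with the pullback of the theta bundle from $J^d_{\wt C}$.

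For the descent I would apply the theorem of F.~Knop \cite{Kn}, in the form already used in Lemma~5.6 of \cite{Su1}. Its hypotheses are that $\wt{\sR}_{\omega}^{ss}$ be Gorenstein with at most rational singularities (which is inherited from $\sH$, cf.\ Notation~\ref{nota3.10}) and that both $\sH_{\omega}\setminus \wt{\sR}_{\omega}^{ss}$ and the non-stable locus inside $\wt{\sR}_{\omega}^{ss}$ have codimension at least $2$. Under the hypothesis $\min_{i}\{(r-1)(g_i-\tfrac{r+3}{4})+|I_i|/k\}\ge 2$ these are supplied by parts~(4) and (2)--(3) of Proposition~\ref{prop3.19} respectively; at the boundary values $\chi_1=n_1^{\omega}$ or $n_1^{\omega}+r$, the set $W_{\omega}$ of part~(3) of Proposition~\ref{prop3.19} plays the role of the stable locus for the purposes of Knop's argument. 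This yields the first claim $(\psi_*\omega_{\wt{\sR}_{\omega}^{ss}})^{inv.}=\omega_{\sP_{\omega}}$.

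For the second step, I would carry out the deformation-theoretic computation of $\omega^{-1}_{\sH}$ exactly as in Proposition~3.4 of \cite{Su1}. Writing the relative tangent of $\sH\to \mathbf{Q}_1\times\mathbf{Q}_2$ as a product of flag bundle tangents and the Grassmannian tangent on $\sE_{x_1}\oplus\sE_{x_2}$, combined with the Quot scheme tangent via Serre duality and Riemann--Roch on $\wt C=C_1\sqcup C_2$, leads to
\[
\omega^{-1}_{\sH}=\hat\Theta_{\omega_c}\otimes \hat{\rm Det}_{\sH}^*(\Theta_{J^d_{\wt C}}^{-1}),
\]
where $\hat\Theta_{\omega_c}$ has exactly the shape displayed in the proof of Proposition~\ref{prop3.13}, with $J^d_C$ replaced by $J^d_{\wt C}=J^{d_1}_{C_1}\times J^{d_2}_{C_2}$. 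By Lemma~2.3 of \cite{Su1}, $\hat\Theta_{\omega_c}$ descends to an ample line bundle $\Theta_{\sP_{\omega_c}}$, while $\hat{\rm Det}_{\sH}$ is $G$-invariant and descends to ${\rm Det}_{\sP_{\omega_c}}$. Combining with the first step yields the asserted identity $\omega^{-1}_{\sP_{\omega_c}}=\Theta_{\sP_{\omega_c}}\otimes {\rm Det}_{\sP_{\omega_c}}^*(\Theta_{J^d_{\wt C}}^{-1})$. Restricting to the fiber $\sP^L_{\omega_c}$, the pullback factor trivializes, so $\omega^{-1}_{\sP^L_{\omega_c}}$ is the restriction of an ample line bundle and hence ample; normality and rational singularities are inherited from Theorem~\ref{thm3.18}.

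The main obstacle is to check that Knop's descent really applies on the fiber $\sH^L_{\omega_c}$ rather than on the full $\sH_{\omega_c}$: since Proposition~\ref{prop3.19} is already formulated in the fixed-determinant setting this is essentially built in, but one must verify that all the codimension estimates used remain sharp after imposing the determinant constraint, and that the substitution of $W_{\omega_c}$ for $(\wt{\sR}_{\omega_c}^{s})^L$ at the boundary values of $\chi_1$ does not spoil the extension of the canonical bundle across the non-stable locus of $\wt{\sR}_{\omega_c}^{ss,L}$.
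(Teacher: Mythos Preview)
Your outline has a genuine gap in the descent step. Proposition~\ref{prop3.19}(2)--(3) do \emph{not} bound the codimension of the non-stable locus in $(\wt\sR_{\omega}^{ss})^L$: they only bound it \emph{inside} $(\wt\sR_{\omega}^{ss})^L\setminus\{\sD_1^f\cup\sD_2^f\}$. The exceptional divisors $\hat\sD_j$, $\hat\sD_j^t$ have codimension~$1$ and contain many strictly semistable points, so the non-stable locus in $(\wt\sR_{\omega}^{ss})^L$ has codimension~$1$ in general. Consequently Lemma~5.6 of \cite{Su1}, whose hypotheses match the irreducible situation of Proposition~\ref{prop3.13}, does not apply here, and your appeal to it is the missing idea rather than the fixed-determinant issue you flag at the end.

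What the paper does instead is verify Knop's criterion in the global form of Lemma~4.17 of \cite{NR}, whose actual hypotheses are: (i)~the locus where $G$ acts with nontrivial stabilizer has codimension~$\ge 2$; and (ii)~every prime divisor in $\wt\sR_{\omega}^{ss}$ maps under $\psi$ to a set of codimension~$\le 1$. Away from $\sD_1^f\cup\sD_2^f$, condition~(i) follows from Proposition~\ref{prop3.19}(2)--(3) (together with Lemma~6.1(4) of \cite{Su3} at the boundary values of $\chi_1$). On each irreducible, normal component $\hat\sD_j$, $\hat\sD_j^t$, however, one must \emph{construct by hand} a GPS $(E,Q)$ with ${\rm Aut}(E,Q)=\mathbb{C}^*$, built from stable parabolic bundles $E_1',E_2'$ on $C_1,C_2$ together with a degenerate choice of $q$ (for $\hat\sD_j$) or an added torsion summand $_{x_i}\mathbb{C}\cdot\beta$ (for $\hat\sD_j^t$), and then check directly that compatibility with the kernel $K\subset E_{x_1}\oplus E_{x_2}$ forces the three scalars to coincide. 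This explicit construction is the bulk of the paper's argument and is entirely absent from your sketch. Condition~(ii) is handled separately via $\psi(\hat\sD_j)=\psi(\hat\sD_j^t)=\sD_j$ (Proposition~2.5 of \cite{Su2}).

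A smaller point: the reducible analogue of the canonical-bundle computation is Proposition~6.4 of \cite{Su3}, not Proposition~3.4 of \cite{Su1}; the paper simply cites the former rather than redoing the calculation over $\wt C=C_1\sqcup C_2$.
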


\begin{proof} According to a result of Knop in \cite{Kn} (see Lemma 4.17 of \cite{NR} for its global formulation), to prove $(\psi_*\omega_{\wt{\sR}_{\omega}^{ss}})^{inv.}=\omega_{\sP_{\omega}}$, it is enough to show that (1) the subset where the action of $G$ is not free has codimension at least two;
(2) for every prime divisor $D$ in $\wt{\sR}_{\omega}^{ss}$, $\psi(D)$ has codimension at most $1$.

To verify condition (1), when $n_1^{\omega}<\chi_1<n_1^{\omega}+r$, we have
$${\rm Codim}(\wt\sR_{\omega}^{ss}\setminus\{\sD_1^f\cup\sD^f_2\}\setminus\wt\sR_{\omega}^{s})>\underset{1\le i\le 2}{{\rm min}}\left\{(r-1)(g_i-1)+\frac{|I_i|}{k}\right\}\ge 2.$$
Note that $\sD^f_j=\hat\sD_j\cup \hat\sD^t_j$ where $\hat\sD_j$, $\hat\sD^t_j$ are irreducible, normal subvarieties (see Proposition C.7 of \cite{NR}), and the subsets of $\hat\sD_j$ and $\hat\sD^t_j$, where the action of $G$ is free, are open subsets.
Thus it is enough to find a $(E, Q)\in\sD^f_j$ ($j=1,\,2$) such that its automorphisms are only scales.
Let $E_i'$ ($i=1,\,2$) be stable parabolic bundles of rank $r$ and $\chi(E'_i)=\chi_i'$ on $C_i$ with parabolic structures determined by $(k, \{\vec n(x),\vec a(x)\}_{x\in I_i})$. If we take $\chi_1'=\chi_1-1$, $\chi_2'=\chi_2$, let $E_1=E_1'\oplus\,_{x_1}\mathbb{C}\cdot\beta$, $E_2=E_2'$ and
$E=(E_1,E_2)$, the surjection $E_{x_1}\oplus E_{x_2}\xrightarrow{q} Q$ is defined by any isomorphism $E_{x_2}\xrightarrow{q_2} Q$ and
a linear map $E_{x_1}=(E'_1)_{x_1}\oplus \mathbb{C}\cdot\beta\xrightarrow{q_1} Q$ such that $q_1(\beta)\neq 0$ and $q_1|_{(E'_1)_{x_1}}\neq 0$. Then $(E,Q)\in {\hat\sD}_2^t$ by definition.
To see ${\rm Aut}((E,Q))=\mathbb{C}^*$, let $0\to K\to E_{x_1}\oplus E_{x_2}\xrightarrow{q}Q\to 0$ and $(E,Q)\xrightarrow{\Phi}(E,Q)$ be an isomorphism. Then $E\xrightarrow{\Phi}E$ is an isomorphism of parabolic bundles such that $\Phi_{x_1+x_2}(K)=K$. Since $E_1'$, $E_2$ are stable, $\Phi|_{E_1}=(\lambda'_1,\lambda_1):E_1'\oplus\,_{x_1}\mathbb{C}\beta\to E_1'\oplus\,_{x_1}\mathbb{C}\beta$ and $\Phi|_{E_2}=\lambda_2: E_2\to E_2$ for nonzero constants
$\lambda_1',\,\lambda_1,\,\lambda_2$. The requirement $\Phi_{x_1+x_2}(K)=K$ implies that $\lambda'_1=\lambda_1=\lambda_2$.
In fact, $K=\{\,(\alpha, f(\alpha))\in E_{x_1}\oplus E_{x_2}\,|\,\forall\,\alpha\in E_{x_1}\}$ where $f=-q_2^{-1}q_1: E_{x_1}\to E_{x_2}$. For any $\alpha=\alpha'+\beta\in E_{x_1}$, we have
$$\Phi_{x_1+x_2}(\alpha, f(\alpha))=(\lambda'_1\alpha'+\lambda_1\beta, \lambda_2f(\alpha')+\lambda_2f(\beta))\in K$$
which implies that $\lambda_2f(\alpha')+\lambda_2f(\beta)=\lambda'_1f(\alpha')+\lambda_1f(\beta)$. Thus $\lambda_1=\lambda_2$ (by taking $\alpha'=0$)
and $\lambda_1'=\lambda_2$ (by taking $\alpha'$ such that $q_1(\alpha')\neq 0$).
Similarly, one can find such $(E,Q)\in  {\hat\sD}_1^t$. To construct $(E, Q)\in \hat\sD_j$ with ${\rm Aut}(E,Q)=\mathbb{C}^*$, we take $\chi'_i=\chi_i$, $E_i=E_i'$ and
$E=(E_1, E_2)$ with $$E_{x_1}\oplus E_{x_2}\xrightarrow{q}Q\to 0$$
defined by any isomorphism $q_2: E_{x_2}\to Q$ and nontrivial linear map $q_1:E_{x_1}\to Q$ (which is not surjective). Thus $(E,Q)\in \hat\sD_1$ and ${\rm Aut}(E,Q)=\mathbb{C}^*$. Similarly one can find such $(E,Q)\in \hat\sD_2$. When $\chi_1=n_1^{\omega}$ or $n_1^{\omega}+r$, we have
${\rm Codim}(\wt\sR_{\omega}^{ss}\setminus\{\sD_1^f\cup\sD^f_2\}\setminus W_{\omega})\ge 2$. Thus we only need to show, for any
$(E,Q)\in (\wt\sR_{\omega}^{ss}\setminus\{\sD_1^f\cup\sD^f_2\})\cap W_{\omega}$, ${\rm Aut}((E,Q))=\mathbb{C}^*$. This is easy since the proof of
Lemma 6.1 (4) in \cite{Su3} implies stability of parabolic bundles $E_1$ and $E_2$. Thus any automorphism of $E=(E_1,E_2)$ must be of type
$(\lambda_1id_{E_1},\lambda_2id_{E_2})$, which induces an automorphism of $(E,Q)$ if and only if $\lambda_1=\lambda_2$.

To verify condition (2), if a prime divisor $D$ is not contained in $\wt\sR_{\omega}^{ss}\setminus W_{\omega}$, $\psi(D)$ is a divisor. If $D$ is contained
in $\wt\sR_{\omega}^{ss}\setminus W_{\omega}$, then $D$ must be one of $\hat\sD_j$, $\hat\sD^t_j$ since ${\rm Codim}(\wt\sR_{\omega}^{ss}\setminus\{\sD_1^f\cup\sD^f_2\}\setminus W_{\omega})\ge 2$. However, $\psi(\hat\sD_j)=\psi(\hat\sD^t_j)=\sD_j$ ($j=1,\,2$)
by Proposition 2.5 of \cite{Su2}, which are divisors. Thus we have proved that $(\psi_*\omega_{\wt{\sR}_{\omega}^{ss}})^{inv.}=\omega_{\sP_{\omega}}$.

When $\omega=\omega_c$, by Proposition 6.4 of \cite{Su3}, there is an ample line bundle $\Theta_{\sP_{\omega_c}}$ on $\sP_{\omega_c}$ such that
$\omega_{\wt{\sR}_{\omega_c}^{ss}}^{-1}=\psi^*(\Theta_{\sP_{\omega_c}}\otimes {\rm Det}_{\sP_{\omega_c}}^*(\Theta_{J^d_{\wt C}}^{-1})).$ Thus
$$\omega_{\sP_{\omega_c}}=(\psi_*(\omega_{\wt{\sR}_{\omega_c}^{ss}}))^{inv.}=\Theta_{\sP_{\omega_c}}^{-1}\otimes {\rm Det}_{\sP_{\omega_c}}^*(\Theta_{J^d_{\wt C}})$$
and $\omega^{-1}_{\sP^L_{\omega_c}}=\Theta_{\sP_{\omega_c}}|_{\sP^L_{\omega_c}}$ is ample, $\sP^L_{\omega_c}$ is a normal Fano variety with only rational singularities.
\end{proof}

\begin{lem}\label{lem3.21} Let $V$ be a normal variety acting by a reductive group $G$. Suppose a good quotient $\phi:V\to U$ exists. Let $\sL$ be a line bundle on $U$ and
$\wt \sL=\phi^*(\sL)$. Let $V''\subset V'\subset V$ be open $G$-invariant subvarieties of $V$ such that $\phi(V')=U$ and $V''=\phi^{-1}(U'')$ for some nonempty open subset
$U''\subset U$. Then $\phi^G_*(\wt\sL|_{V'})=\sL$ (i.e, for any nonempty open set $X\subset U$,
${\rm H}^0(\phi^{-1}(X),\wt\sL)^{inv.}\to {\rm H}^0(V'\cap\phi^{-1}(X),\wt\sL)^{inv.}$ is an isomorphism).

\end{lem}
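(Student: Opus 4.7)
The plan is to check, for every open $X\subset U$, that the restriction map
$$\rho_X:{\rm H}^0(\phi^{-1}(X),\wt\sL)^{inv.}\longrightarrow {\rm H}^0(V'\cap\phi^{-1}(X),\wt\sL)^{inv.}$$
is a bijection. \emph{Injectivity} is immediate: $V$ is normal irreducible, so the nonempty open subvariety $V'$ is dense in $V$ and consequently $V'\cap\phi^{-1}(X)$ is dense in $\phi^{-1}(X)$; a section of the line bundle $\wt\sL$ vanishing on a dense open subset must be zero.

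For \emph{surjectivity}, I would first reduce to the case $\sL=\sO_U$ by working locally. Cover $X$ by affine opens $X_\alpha$ on which $\sL|_{X_\alpha}$ is trivialized by $\sigma_\alpha$, and given $h\in {\rm H}^0(V'\cap\phi^{-1}(X),\wt\sL)^{inv.}$ form the quotient $h/\phi^*\sigma_\alpha$, which is a $G$-invariant regular function on $V'\cap\phi^{-1}(X_\alpha)$. If each such function descends to $\phi^*g_\alpha$ with $g_\alpha\in\sO(X_\alpha)$, the sections $g_\alpha\sigma_\alpha\in{\rm H}^0(X_\alpha,\sL)$ patch (compatibility on overlaps follows from injectivity) to produce $\bar h\in{\rm H}^0(X,\sL)$ whose pullback restricts to $h$.

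It then remains to treat the structure-sheaf case. Take $h\in\sO(V'\cap\phi^{-1}(X))^G$. Restrict to the saturated open subset $V''\cap\phi^{-1}(X)=\phi^{-1}(U''\cap X)$ (using $V''\subset V'$ and $V''=\phi^{-1}(U'')$) and apply the good-quotient identity $(\phi_*\sO_V)^G=\sO_U$ there to write $h|_{\phi^{-1}(U''\cap X)}=\phi^*f$ with $f\in\sO(U''\cap X)$. Because $V$ is normal irreducible and $\phi$ is a good quotient by a reductive group, the standard identification $K(V)^G=K(U)$ lets us view $h$ as $\phi^*g$ for a unique rational function $g\in K(U)$ extending $f$. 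To finish, I would show $g\in\sO(X)$: if $g$ had a pole along a prime divisor $D\subset U$ meeting $X$, pick $u\in D\cap X$ and use $\phi(V')=U$ to lift $u$ to $v\in V'\cap\phi^{-1}(X)$; then $h=\phi^*g$ would fail to be regular at $v$, contradicting the assumption on $h$.

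The main obstacle is the pole-divisor argument together with the clean invocation of $K(V)^G=K(U)$; the hypothesis $V''=\phi^{-1}(U'')\subset V'$ plays a supporting role by anchoring $h$ to a regular function $f$ on the saturated piece, ensuring that the rational extension $g$ is already regular on the dense open subset $U''\cap X$ before we argue that it is regular on all of $X$. Everything else (the local trivialization of $\sL$ and the gluing of the $g_\alpha$) is routine.
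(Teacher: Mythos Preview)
The paper gives no argument here; it simply records that the statement is a reformulation of Lemma~4.16 of \cite{NR}. Your direct approach is sound in outline, but the pole-divisor step has a real gap. The assertion ``pick $u\in D\cap X$, lift to $v\in V'\cap\phi^{-1}(X)$; then $h=\phi^*g$ would fail to be regular at $v$'' is false for an arbitrary closed point: for a good quotient $\phi$, the pullback $\phi^*g$ may well be regular at a point $v$ even though $g$ is singular at $\phi(v)$. For instance, let $\mathbb{G}_m$ act on $\mathbb{A}^3$ with weights $(1,1,-1)$, so that the quotient map is $\phi(x,y,z)=(xz,yz)\in\mathbb{A}^2$; then $g=s/t$ is not regular at the origin, yet $\phi^*g=x/y$ is regular at $(0,1,0)\in\phi^{-1}(0,0)$.

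The repair is to lift the \emph{generic} point $\eta$ of $D$ rather than a closed point. Since $\phi(V')=U$, the locally closed set $V'\cap\phi^{-1}(D)$ surjects onto $D$, so some irreducible component $C$ of it dominates $D$; the generic point $\xi$ of $C$ then maps to $\eta$. Because $U$ is normal (invariants of a normal domain under a reductive group are again normal), $\sO_{U,\eta}$ is a DVR; thus $g\notin\sO_{U,\eta}$ forces $g^{-1}\in\mathfrak{m}_\eta$, hence $\phi^*(g^{-1})\in\mathfrak{m}_\xi$ and $\phi^*g\notin\sO_{V,\xi}$, contradicting the regularity of $h$ at $\xi\in V'$. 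One further remark: the identity $K(V)^G=K(U)$ you invoke can fail for good quotients (e.g.\ weights $(1,1)$ on $\mathbb{A}^2$, where the quotient is a point), but you do not need it---having already written $h=\phi^*f$ on the dense saturated open $V''$, you know $h=\phi^*f$ as rational functions and may simply take $g=f\in K(U)$.
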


\begin{proof} It is in fact a reformulation of Lemma 4.16 in \cite{NR}, where
$${\rm H}^0(V,\wt\sL)^{inv.}\to {\rm H}^0(V',\wt\sL)^{inv.}$$ was shown to be an isomorphism.
\end{proof}

\begin{thm}\label{thm3.22} For any data $\omega=(\{k,\vec n(x),\,\,\vec a(x)\}_{x\in I_1\cup I_2},\sO(1))$ and integers $\chi_1$, $\chi_2$ satisfying $\chi_1+\chi_2=\chi+r$, $n_j^{\omega}\le\chi_j\le n_j^{\omega}+r$ ($j=1,\,2$), let $\sP^L_{\omega}$ be
the coarse  moduli space  of $s$-equivalence classes of semi-stable GPS $E=(E_1, E_2)$ on $\wt C$ with fixed determinant $L$, $\chi(E_j)=\chi_j$ and parabolic structures at the points of $I$ given by the data $\omega$. Then $\sP^L_{\omega}$ is of globally $F$-regular type.
\end{thm}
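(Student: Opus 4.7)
The plan is to follow the template established in Theorem \ref{thm3.14}, applying Proposition \ref{prop2.10} with $Y=\sP^L_{\omega}$ and $Z=\sP^L_{\omega_c}$ for a suitably enriched datum $\omega_c$ so that Proposition \ref{prop3.20} makes $Z$ a normal Fano variety, hence of globally F-regular type by Proposition \ref{prop2.6}.

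First I would choose a finite set $I'=I'_1\cup I'_2\subset\wt C\setminus I$ with $I'_i\subset C_i\setminus I_i$ large enough that
$$\min_{1\le i\le 2}\left\{(r-1)\left(g_i-\tfrac{r+3}{4}\right)+\tfrac{|I_i|+|I'_i|}{2r}\right\}\ge 2,$$
and set $\omega_c=(2r,\{\vec n(x),\vec a_c(x)\}_{x\in I\cup I'},\sO(1))$ as in Proposition \ref{prop3.20}. Then I would form the flag bundle
$$\wt\sR'=Grass_r(\sF_{x_1}\oplus\sF_{x_2})\times_{\mathbf{Q}}\left(\underset{x\in I\cup I'}{\times_{\mathbf{Q}}}Flag_{\vec n(x)}(\sF_x)\right)\xrightarrow{\hat f}\wt\sR,$$
which is a $G$-invariant affine flag bundle and hence $p$-compatible, with $\hat f_*\sO_{(\sH')^L}=\sO_{\sH^L}$ on the normal open subschemes $(\sH')^L\subset\wt\sR'$ and $\sH^L\subset\wt\sR$ constructed as in Notation \ref{nota3.11}. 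The inclusions $(\wt\sR^{ss}_{\omega})^L\subset\sH^L$ and $(\wt\sR'^{ss}_{\omega_c})^L\subset(\sH')^L$ verify condition (1) of Proposition \ref{prop2.10}, while the above yields condition (2).

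To supply condition (3) I would take $W=W_{\omega_c}\subset(\wt\sR'^{ss}_{\omega_c})^L$ (the open set appearing in Proposition \ref{prop3.19}(3)) and set $\hat X=W\cap\hat f^{-1}((\wt\sR^{ss}_{\omega})^L)$. By Proposition \ref{prop3.19}(4) applied to $\omega_c$, one gets
$$\mathrm{Codim}\bigl((\wt\sR'^{ss}_{\omega_c})^L\setminus W\bigr)\ge 2,$$
and by Proposition \ref{prop3.20} the quotient $Z=\sP^L_{\omega_c}$ is a normal Fano variety with rational singularities, hence of globally $F$-regular type.

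The main obstacle is verifying the identity $\hat X=\varphi^{-1}\varphi(\hat X)$. As in the proof of Theorem \ref{thm3.14}, this reduces to showing that for each $(E,Q)\in(\wt\sR'^{ss}_{\omega_c})^L$,
$$(E,Q)\in\hat X\ \Longleftrightarrow\ \mathrm{gr}(E,Q)\in\hat X.$$
The delicate point specific to the reducible curve case is that the potentially destabilising subsheaves have rank profiles $(r_1,r_2)\in\{(r,0),(0,r),(0,0)\}$ corresponding to torsion summands at $x_1$ and $x_2$, which are precisely the profiles excluded from the slope test that defines $W_{\omega_c}$ in Proposition \ref{prop3.19}(3). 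Consequently $\mathrm{gr}(E,Q)$ splits as a stable GPS $(\wt E,\wt Q)$ together with summands $(\,_{x_1}\tau_1\oplus\,_{x_2}\tau_2,\tau_1\oplus\tau_2)$, and both membership in $W_{\omega_c}$ and $\omega$-semistability are insensitive to these torsion summands since they contribute no subsheaves of the rank profiles that govern either condition. This establishes the bi-implication above, and Proposition \ref{prop2.10} then delivers the conclusion that $\sP^L_{\omega}$ is of globally $F$-regular type.
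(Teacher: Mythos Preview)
Your template is correct, but there is a genuine gap precisely where the reducible case diverges from Theorem \ref{thm3.14}. Condition (3) of Proposition \ref{prop2.10} requires ${\rm Codim}(\sR'\setminus W)\ge 2$ with $\sR'=(\sH')^L$, not merely the bound inside the semistable locus that Proposition \ref{prop3.19}(4) provides. In the reducible setting there is no good estimate for ${\rm Codim}(\sH\setminus\wt\sR^{ss}_{\omega})$: subsheaves of rank $(r,0)$ or $(0,r)$ built from torsion can destroy semistability (this is noted just before Proposition \ref{prop3.19}), and the paper states explicitly in the proof that ${\rm Codim}((\sH')^L\setminus W)\ge 2$ does \emph{not} hold. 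The remedy is to pass to the locally free locus $(\sH_F')^L\subset(\sH')^L$, where Proposition \ref{prop3.19}(1),(2) do yield ${\rm Codim}((\sH_F')^L\setminus W)\ge 2$, and then invoke Lemma \ref{lem3.21} (applied to $\hat X_F\xrightarrow{\varphi}X$ and $(\wt\sR_{\omega,F}^{ss})^L\xrightarrow{\psi}Y$) to recover $f_*\sO_X=\sO_Y$ and its closed-fiber analogue. Proposition \ref{prop2.10} is thus not applied verbatim; its proof is rerun with this modification.

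Two further points you should not skip. First, your description of $\mathrm{gr}(E,Q)$ for $(E,Q)\in W_{\omega_c}$ is incomplete: besides the shape $(\wt E,\wt Q)\oplus(\text{torsion})$ with $(\wt E,\wt Q)$ stable of rank $(r,r)$, there is a second shape $(\wt E_1,\wt Q_1)\oplus(\wt E_2,\wt Q_2)\oplus(\text{torsion})$ with stable summands of ranks $(r,0)$ and $(0,r)$, and checking $\hat X=\varphi^{-1}\varphi(\hat X)$ in that case uses that the relevant extension \eqref{3.10} splits. Second, the types $\{\vec n(x)\}_{x\in I'}$ must be chosen so that $\ell^c_j=\frac{c_j}{c_1+c_2}\ell^c$; this forces $n^{\omega_c}_j=\chi_j-\frac{r}{2}$ and hence $n^{\omega_c}_j<\chi_j<n^{\omega_c}_j+r$, without which the component $\sP_{\chi_1,\chi_2}$ for the datum $\omega_c$ need not be nonempty and Proposition \ref{prop3.19}(2) is not available.
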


\begin{proof} Let $I'_i\subset X_i\setminus (I_i\cup\{x_i\})$ be a subset and $I'=I'_1\cup I'_2$. Recall
$$\sR_i=\underset{x\in I_i}{\times_{\textbf{Q}_i}}
Flag_{\vec n(x)}(\sF^i_x)\to \textbf{Q}_i$$ and
$\rho:\widetilde{\sR}=Grass_r(\sF^1_{x_1}\oplus\sF^2_{x_2})\to\sR=\sR_1\times\sR_2$, let
$$\sR_i'=\underset{x\in I_i\cup I_i'}{\times_{\mathbf{Q}}}
Flag_{\vec n(x)}(\sF^i_x)\to \sR_i,\quad \sR'=\sR_1'\times\sR_2'\xrightarrow{\hat f}\sR=\sR_1\times \sR_2$$ be the projection and
$\wt\sR':=\wt\sR\times_{\sR}\sR'\xrightarrow{\hat f}\wt\sR$ be induced via the diagram
$$\CD
  {\wt\sR}' @>\rho>> {\sR}' \\
  @V \hat f VV @V \hat f VV  \\
  \wt\sR @>\rho>> \sR
\endCD$$
Then, on $\sH^L\subset \wt\sR$,  it is clear that $(\sH')^L:={\hat f}^{-1}(\sH^L)\xrightarrow{\hat f} \sH^L$ is a $G$-invariant and
$p$-compatible morphism such that $\hat f_*\sO_{(\sH')^L}=\sO_{\sH^L}$.

For $\omega=(k, \{\vec n(x),\,\vec a(x)\}_{x\in I},\sO(1))$, $\omega_c=(2r, \{\vec n(x),\,\vec a_c(x)\}_{x\in I\cup I'},\sO(1))$, we have
$(\wt\sR_{\omega}^{ss})^L\subset\sH^L$, $\,(\wt\sR_{\omega_c}^{\prime\,ss})^L\subset (\sH')^L$. Moreover, for $\omega_c$,
let $\ell_j^c=2\chi_j-r-\sum_{x\in I_j\cup I'_j}r_{l_x}(x)$ and $\ell^c=\ell^c_1+\ell^c_2=2\chi-\sum_{x\in I\cup I'}r_{l_x}(x).$
Then $$\sum_{x\in I\cup I'}\sum^{l_x}_{i=1} (\bar a_{i+1}(x)-\bar a_i(x))r_i(x) +r\ell^c= 2r\chi.$$
The choices of $\{\vec n(x)\}_{x\in I'}$ satisfying $\ell^c_j=\frac{c_j}{c_1+c_2}\ell^c$ for arbitrary large $|I'_1|$ and $|I'_2|$ are possible and it is easy to compute that $n_j^{\omega^c}=\chi_j-\frac{r}{2}$, thus
$$n_j^{\omega^c}<\chi_j<n_j^{\omega^c}+r\qquad (j=1,\,\,2).$$
Recall $(\wt\sR_{\omega}^{ss})^L\xrightarrow{\psi}\sP^L_{\omega}:=Y,\quad (\wt\sR_{\omega_c}^{\prime\,ss})^L\xrightarrow{\varphi}\sP^L_{\omega_c}:=Z$, choose $I_i'$ satisfying
$$\underset{1\le i\le 2}{{\rm min}}\left\{(r-1)(g_i-\frac{r+3}{4})+\frac{|I_i|+|I_i'|}{2r}\right\}\ge 2.$$
Then $Z$ is a normal Fano variety with only rational singularities by Proposition \ref{prop3.20}, which is in particular of globally $F$-regular type.
To apply Proposition \ref{prop2.10}, let $$W=W_{\omega_c}\subset (\wt\sR_{\omega_c}^{\prime\,ss})^L,\quad \hat X=W\cap\hat f^{-1}((\wt\sR_{\omega}^{ss})^L).$$
For any $(E, Q)\in (\wt\sR_{\omega_c}^{\prime\,ss})^L\setminus(\wt\sR_{\omega_c}^{\prime\,s})^L$, there is an exact sequence
\ga{3.10}{0\to (E',Q')\to (E,Q)\to (\wt E,\wt Q)\to 0}
in the category $\sC_{\mu}$ (see Proposition 2.4 of \cite{Su2}) such that $(\wt E, \wt Q)$ is stable (respect to $\omega_c$). Then either $\wt E$ is torsion free when $r(\wt E)>0$ or $(\wt E,\wt Q)=(\,_{x_i}\mathbb{C}, \mathbb{C})$. If $(E,Q)\in W$, $\wt E$ has rank $r$ or rank $(r, 0)$, $(0,r)$ when $r(\wt E)>0$. Thus
it is easy to show that $(E,Q)\in W$ if and only if $gr(E,Q)$ is one of the following\begin{itemize}
\item [(1)] $gr(E,Q)=(\wt E, \wt Q)\oplus (\,_{x_1}\tau_1\oplus\,_{x_2}\tau_2, \tau_1\oplus\tau_2)$ where $(\wt E,\wt Q)\in\sC_{\mu}$ is stable of rank $(r,r)$;
\item [(2)] $gr(E,Q)=(\wt E_1,\wt Q_1)\oplus (\wt E_2,\wt Q_2)\oplus (\,_{x_1}\tau_1\oplus\,_{x_2}\tau_2, \tau_1\oplus\tau_2)$
where $(\wt E_1,\wt Q_1)$ and $(\wt E_2,\wt Q_2)\in \sC_{\mu}$ are stable of rank $(r,0)$ and $(0,r)$,\end{itemize}
which implies that $\varphi^{-1}\varphi(W)=W$. Hence, to check that $\hat X=\varphi^{-1}\varphi(\hat X)$, it is enough to show that $(E,Q)$
is semi-stable (respect to $\omega$) if and only if the above GPS
$(\wt E,\wt Q)$, $(\wt E_1,\wt Q_1)$ and $(\wt E_2,\wt Q_2)$ in (1) and (2) are semi-stable (respect to $\omega$) with the same slope $\mu_{\omega}(E,Q)$, which is easy to
check by using \eqref{3.10} when either $\wt E$ has rank $r$ or $r(\wt E)=0$. If $\wt E$ has rank $(0,r)$, $E'$ must have rank $(r,0)$. Then $(E,Q)$ is $\omega$-semistable if and only if $(E',Q')$, $(\wt E,\wt Q)$ are $\omega$-semistable with $\mu_{\omega}(E',Q')=\mu_{\omega}(\wt E,\wt Q)=\mu_{\omega}(E,Q)$ since the exact sequence
\eqref{3.10} is split in this case.

Now $\hat X \xrightarrow{\varphi} X:=\varphi(\hat X)\subset Z$ is a category quotient and the $G$-invariant $(\sH')^L\xrightarrow{\hat f} \sH^L$ induces a morphism $f: X \to Y$ such that
$$\CD
 (\sH')^L\supset\hat X @>\varphi>> X \\
  @V \hat f VV @V f VV  \\
  \sH^L\supset(\wt\sR_{\omega}^{ss})^L @>\psi>> Y
\endCD$$ is a commutative diagram. However we do not have $${\rm Codim}((\sH')^L\setminus W)\ge 2$$ as required in Proposition \ref{prop2.10}, which was used to prove
$f_*\sO_X=\sO_Y$.

On the other hand, let $(\wt\sR_{\omega, F}^{ss})^L\subset (\wt\sR_{\omega}^{ss})^L$, $(\sH_F')^L \subset (\sH')^L$ be the $G$-invariant open sets of GPS $(E,Q)$ with $E$ being locally free. Then
$${\rm Codim}((\sH_F')^L\setminus W)\ge\underset{1\le i\le 2}{{\rm min}}\left\{(r-1)(g_i-\frac{r+3}{4})+\frac{|I_i|+|I_i'|}{k}\right\}\ge 2$$
by Proposition \ref{prop3.19} (1) and (2), which and Lemma \ref{lem3.21} imply
$$f_*\sO_X=\sO_Y.$$
In fact, let $\hat X_F=\hat X\cap (\sH_F')^L$ and apply Lemma \ref{lem3.21} to the surjections
$$\hat X_F\xrightarrow\varphi X, \quad (\wt\sR_{\omega, F}^{ss})^L\xrightarrow\psi Y,$$
we have $\varphi^G_*\sO_{\hat X_F}=\sO_X$, $\psi^G_*\sO_{(\wt\sR_{\omega, F}^{ss})^L}=\sO_Y$, which means $\forall \,\,U\subset Y$,
$$\sO_Y(U)=H^0(\psi^{-1}(U),\sO_{(\wt\sR_{\omega,F}^{ss})^L})^{inv.}$$ $$\sO_X(f^{-1}(U))=H^0(\varphi^{-1}(f^{-1}(U)),\sO_{\hat X_F})^{inv.}.$$
On the other hand, for the $G$-invariant morphism $(\sH')^L\xrightarrow{\hat f}\sH^L$ with $\hat f_*\sO_{(\sH')^L}=\sO_{\sH^L}$,
by using the fact that $$\hat f^{-1}\psi^{-1}(U)\setminus\hat f^{-1}\psi^{-1}(U)\cap\hat X=\hat f^{-1}\psi^{-1}(U)\cap ((\sH_F')^L\setminus W)$$
has at least codimension two, we have
$$\aligned\sO_Y(U)&=H^0(\psi^{-1}(U),\sO_{(\wt\sR_{\omega,F}^{ss})^L})^{inv.}=H^0(\hat f^{-1}\psi^{-1}(U),\sO_{(\sH_F')^L})^{inv.}\\&=
H^0(\hat f^{-1}\psi^{-1}(U)\cap\hat X,\sO_{(\sH_F')^L})^{inv.}\\&=H^0(\varphi^{-1}(f^{-1}(U)),\sO_{\hat X_F})^{inv.}=\sO_X(f^{-1}(U)).
\endaligned$$
Thus $Y=\sP^L_{\omega}$ is of globally $F$-regular type since $X$ is so.
\end{proof}

\section{Vanishing theorems and recurrence relations}

In this section, we use the main results of Section 3 to prove vanishing theorems on moduli spaces of parabolic sheaves on curves with at most one node, and to establish
recurrence relations of the dimension of generalized theta functions. As an immediate application of globally $F$-regular type of the moduli spaces of parabolic sheaves on a
smooth projective curve $C$, we have the following vanishing theorem

\begin{thm}\label{thm4.1} Let $\sU_{C,\,\omega}$ be the moduli
space of semistable parabolic bundles
of rank $r$ and degree $d$ on a smooth projective curve $C$ with parabolic structures determined by $\omega=(k,\{\vec n(x),\vec a(x)\}_{x\in I})$.
Then $${\rm H}^i(\sU_{C,\,\omega},\sL)=0 \quad \forall\,\,i>0$$
for any ample line bundle $\sL$ on $\sU_{C,\,\omega}$.
\end{thm}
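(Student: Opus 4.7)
The plan is to reduce the statement on $\sU_{C,\,\omega}$ to its fixed-determinant fiber $\sU^L_{C,\,\omega}$, where Theorem \ref{thm3.7} combined with Theorem \ref{thm2.5}\,(2) already delivers $H^i(\sU^L_{C,\,\omega},\sN)=0$ for every $i>0$ and every nef line bundle $\sN$ on $\sU^L_{C,\,\omega}$. The bridge between the two moduli spaces is the tensor product morphism
$$f:J_C^0\times\sU^L_{C,\,\omega}\to\sU_{C,\,\omega},\qquad (\sL_0,E)\mapsto\sL_0\otimes E,$$
which, as already observed at the end of the proof of Theorem \ref{thm3.7}, is a finite Galois cover of degree $r^{2g}$ with Galois group the $r$-torsion $J_C^0[r]$ acting diagonally. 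In characteristic zero the normalized trace map splits $\sO_{\sU_{C,\,\omega}}\hookrightarrow f_*\sO$, so $H^i(\sU_{C,\,\omega},\sL)$ embeds as a direct summand of $H^i(J_C^0\times\sU^L_{C,\,\omega},f^*\sL)$, and it suffices to kill the latter for $i>0$.

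To handle the cohomology on the product I would exploit the fact that these moduli spaces have discrete Picard group (in particular $\Pic^0(\sU^L_{C,\,\omega})=0$, as the Picard group is generated by the theta class together with the flag/boundary classes). Combined with the seesaw principle on $J_C^0\times\sU^L_{C,\,\omega}$, this forces a box-tensor decomposition $f^*\sL\cong p_1^*\sN\otimes p_2^*\sL_E$ with $\sN$ a line bundle on $J_C^0$ and $\sL_E$ a line bundle on $\sU^L_{C,\,\omega}$, both necessarily ample since $f^*\sL$ is. The K\"unneth formula
$$H^i\bigl(J_C^0\times\sU^L_{C,\,\omega},f^*\sL\bigr)=\bigoplus_{p+q=i}H^p(J_C^0,\sN)\otimes H^q(\sU^L_{C,\,\omega},\sL_E)$$
then closes the argument: for $i>0$ either $p>0$, in which case the first factor vanishes by Mumford's vanishing for ample line bundles on the abelian variety $J_C^0$, or $q>0$, in which case the second factor vanishes by the fixed-determinant reduction.

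The main obstacle I anticipate is the seesaw decomposition, which hinges on $\Pic^0(\sU^L_{C,\,\omega})=0$. If one prefers not to invoke the Picard group directly, a workable substitute is to push $f^*\sL$ forward along $p_2:J_C^0\times\sU^L_{C,\,\omega}\to\sU^L_{C,\,\omega}$: the restriction of $f^*\sL$ to each fiber $J_C^0\times\{E\}$ is an ample line bundle on an abelian variety, so $R^j p_{2*}(f^*\sL)=0$ for $j>0$ by Mumford's vanishing, and the Leray spectral sequence collapses to $H^i(\sU^L_{C,\,\omega},\,p_{2*}f^*\sL)$. One is then reduced to showing that the locally free sheaf $p_{2*}f^*\sL$ has vanishing higher cohomology on $\sU^L_{C,\,\omega}$, for which a nef filtration (or any vector-bundle analogue of the globally $F$-regular vanishing of Theorem \ref{thm2.5}) would suffice.
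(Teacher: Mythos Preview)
Your proposal is correct and follows essentially the same route as the paper: reduce via the $r^{2g}$-fold cover $J^0_C\times\sU^L_{C,\,\omega}\to\sU_{C,\,\omega}$, decompose the pulled-back ample bundle as a box product, and conclude by K\"unneth together with Mumford's vanishing on $J^0_C$ and Theorem~\ref{thm2.5}(2) on $\sU^L_{C,\,\omega}$.

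The one place where the paper is cleaner than your write-up is the justification of the box decomposition. You appeal to structural facts about $\Pic(\sU^L_{C,\,\omega})$ (``generated by the theta class together with the flag/boundary classes''), which would require separate input. The paper instead observes that $H^1(\sU^L_{C,\,\omega},\sO)=0$ is already a direct consequence of global $F$-regular type via Theorem~\ref{thm2.5}(2) applied to the trivial line bundle; this immediately gives $\Pic^0(\sU^L_{C,\,\omega})=0$ and hence the seesaw decomposition. With this in hand your alternative Leray argument in the last paragraph is unnecessary, and in any case that alternative is left incomplete (you do not establish the required vanishing for the vector bundle $p_{2*}f^*\sL$).
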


\begin{proof} Let ${\rm Det}: \sU_{C,\,\omega}\to J^d_C$ and $\sU_{C,\,\omega}^L={\rm Det}^{-1}(L)$, then morphism
$$J^0_C\times \sU_{C,\,\omega}^L\to \sU_{C,\,\omega},\quad (L_0, E)\mapsto L_0\otimes E$$
is a $r^{2g}$-fold cover. Then it is enough to show $H^i(J^0_C\times \sU_{C,\,\omega}^L, \sL)=0$ for $i>0$ and any ample
line bundle $\sL$. Since $\sU_{C,\,\omega}^L$ is of globally $F$-regular type, $H^1(\sU_{C,\,\omega}^L, \sO_{\sU_{C,\,\omega}^L})=0$ by Theorem \ref{thm2.5} (2).
Thus $\sL=\sL_1\otimes\sL_2$ where $\sL_1$ (resp. $\sL_2$) is an ample bundle on $J^0_C$ (resp. $\sU_{C,\,\omega}^L$) and
$$H^i(J^0_C\times \sU_{C,\,\omega}^L, \sL)=H^i(J^0_C,\sL_1)\otimes H^0( \sU_{C,\,\omega}^L,\sL_2)=0.$$
\end{proof}

For any irreducible curve $C$ with at most one node $x_0\in C$, there is an algebraic family of ample line bundles $\Theta_{\sU_{C,\,\omega}}$ on
$\sU_{C,\,\omega}$ when
\ga{4.1}{\text{$\ell:=\frac{k\chi-\sum_{x\in
I}\sum^{l_x}_{i=1}d_i(x)r_i(x)}{r}$ is an integer}}
(see Theorem 3.1 of \cite{Su3}). Then Theorem \ref{thm4.1} implies that the number
\ga{4.2} {D_g(r,d,\omega)=dim H^0(\sU_{C,\,\omega},\Theta_{\sU_{C,\,\omega}})}
is independent of $C$, parabolic points $x\in I$ (of course, depending on the number $|I|$ of parabolic points) and the choice of $\Theta_{\sU_{C,\,\omega}}$ in the algebraic family when $C$ is smooth.

When $C$ has one node $x_0\in C$, the moduli spaces $\sU_{C,\,\omega}$ are only seminormal (see Theorem 4.2 of \cite{Su1}) and its normalization
$$\phi:\sP_{\omega}\to \sU_{C,\,\omega}$$
is the coarse  moduli space $\sP_{\omega}$ of $s$-equivalence classes of semi-stable GPS on $\wt C\xrightarrow{\pi} C$ with generalized parabolic structures on $\pi^{-1}(x_0)=x_1+x_2$ and parabolic structures at the points of $\pi^{-1}(I)$ given by the data $\omega$ (see Proposition 2.1 of \cite{Su1}, or Proposition 3.1
of \cite{Su2}).

\begin{lem}\label{lem4.2}[Lemma 5.5 of \cite{Su1}] For any line bundle $\sL$ on $\sU_{C,\,\omega}$,
$$\phi^*:H^1(\sU_{C,\,\omega},\sL)\to H^1(\sP_{\omega},\phi^*\sL)$$
is injective.
\end{lem}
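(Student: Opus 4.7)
The plan is to use Leray for the finite morphism $\phi$ together with the conductor exact sequence. Since $\phi$ is finite, $R^q\phi_*\phi^*\sL=0$ for $q>0$, and the projection formula gives $H^i(\sP_\omega,\phi^*\sL)=H^i(\sU_{C,\omega},\sL\otimes\phi_*\sO_{\sP_\omega})$. Let $\sC$ be the cokernel of the canonical inclusion $\sO_{\sU_{C,\omega}}\hookrightarrow\phi_*\sO_{\sP_\omega}$; it is a coherent sheaf supported on the non-normal locus $\sN\subset\sU_{C,\omega}$. Tensoring by the locally free sheaf $\sL$ produces the long exact sequence
$$H^0(\sP_\omega,\phi^*\sL)\xrightarrow{\alpha}H^0(\sU_{C,\omega},\sC\otimes\sL)\xrightarrow{\delta}H^1(\sU_{C,\omega},\sL)\xrightarrow{\phi^*}H^1(\sP_\omega,\phi^*\sL),$$
so the desired injectivity of $\phi^*$ is equivalent to the surjectivity of $\alpha$ (equivalently, the vanishing of $\delta$).

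To make $\sC$ explicit I would exploit the seminormality of $\sU_{C,\omega}$ (Theorem \ref{thm3.3}). Writing $\bar\sN:=\phi^{-1}(\sN)$ with its reduced structure, the conductor square
$$\begin{CD} \bar\sN @>>> \sP_\omega \\ @VVV @VV\phi V \\ \sN @>>> \sU_{C,\omega} \end{CD}$$
yields the Mayer--Vietoris sequence $0\to\sO_{\sU_{C,\omega}}\to\phi_*\sO_{\sP_\omega}\oplus i_*\sO_\sN\to j_*\sO_{\bar\sN}\to 0$, in which $i:\sN\hookrightarrow\sU_{C,\omega}$ and $j:\bar\sN\hookrightarrow\sU_{C,\omega}$. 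Combined with the defining sequence of $\sC$, this identifies $\sC\cong j_*\sO_{\bar\sN}/i_*\sO_\sN$; concretely, a global section of $\sC\otimes\sL$ is a section of $\sL|_{\bar\sN}$ modulo one pulled back from $\sN$, and $\alpha$ sends a global section $s$ of $\phi^*\sL$ to its restriction $s|_{\bar\sN}$ modulo $\sL|_\sN$.

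The main obstacle is showing that this $\alpha$ is surjective, i.e.\ every such ``jump'' section on $\bar\sN$ extends to a global section of $\phi^*\sL$ on $\sP_\omega$. I would attack this by lifting to the GIT parameter space $\sH$ of Notation \ref{nota3.10}: by Lemma \ref{lem3.21}, $G$-invariant sections on $\sH$ correspond to sections downstairs on $\sP_\omega$, and the reduced preimage of $\bar\sN$ inside $\sH$ is the explicit divisor $\hat\sD_1\cup\hat\sD_2$ of Proposition \ref{prop3.12} parametrising GPS whose push-forward to $C$ is not locally free at the node. Using the ideal-sheaf sequence
$$0\to\phi^*\sL(-\bar\sN)\to\phi^*\sL\to\phi^*\sL|_{\bar\sN}\to 0,$$
the surjectivity of the restriction on global sections reduces to an $H^1$-vanishing for $\phi^*\sL(-\bar\sN)$ on $\sP_\omega$. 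I expect to derive this vanishing from the global $F$-regular type of the determinant fibres $\sP_\omega^L$ (Theorem \ref{thm3.14}) after restricting to a fibre of the determinant morphism and exhibiting $-\bar\sN$ as the difference of a nef class and an effective one on that fibre; this is the genuinely non-trivial input where the results of Section 3 are essential.
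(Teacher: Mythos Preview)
Your setup is fine and is surely how any proof begins: since $\phi$ is finite, the Leray sequence collapses, and tensoring the conductor sequence $0\to\sO_{\sU_{C,\omega}}\to\phi_*\sO_{\sP_\omega}\to\sC\to 0$ by $\sL$ reduces injectivity of $\phi^*$ on $H^1$ to surjectivity of $\alpha:H^0(\sP_\omega,\phi^*\sL)\to H^0(\sC\otimes\sL)$. Using seminormality to identify $\sC$ with $\phi'_*\sO_{\bar\sN}/\sO_\sN$ is also correct. (Note, incidentally, that the paper does not prove this lemma at all; it simply quotes Lemma~5.5 of \cite{Su1}.)

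The gap is in your last paragraph. You propose to force surjectivity of $\alpha$ by the much stronger vanishing $H^1(\sP_\omega,\phi^*\sL(-\bar\sN))=0$, and to get that from the globally $F$-regular type of $\sP_\omega^L$. This cannot work as stated, for two reasons. First, the lemma is asserted for \emph{every} line bundle $\sL$, and $H^1(\sP_\omega,\phi^*\sL(-\bar\sN))$ certainly does not vanish for all $\sL$: take $\sL$ sufficiently anti-ample. Second, even when $\sL$ is ample or nef, $\phi^*\sL(-\bar\sN)$ is nef minus effective, not nef, so Theorem~\ref{thm2.5}(2) does not apply; your phrase ``exhibiting $-\bar\sN$ as the difference of a nef class and an effective one'' does not help, since $F$-regular vanishing goes the wrong way for such classes. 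There is also a logical/historical point: Lemma~\ref{lem4.2} is imported from \cite{Su1}, which predates the $F$-regularity results of this paper and is used \emph{alongside} Theorem~\ref{thm3.14} in the proof of Theorem~\ref{thm4.3}, not as a consequence of it.

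What you are missing is that the argument in \cite{Su1} does not pass through any cohomology vanishing on $\sP_\omega$; it exploits the explicit geometry of the normalization. The preimage $\bar\sN$ breaks up as $\sD_1\cup\sD_2$ (cf.\ Proposition~\ref{prop3.12}), and $\phi$ restricted to each $\sD_j$ is an identification with the non-normal locus, so that $\sC\cong\sO_\sN$ (up to the intersection $\sD_1\cap\sD_2$) and $\alpha$ becomes the difference of two restriction maps. Surjectivity is then read off from this concrete description together with the moduli-theoretic identification of $\sD_1$ and $\sD_2$, not from an $H^1$-vanishing. If you want to reconstruct the proof, this is the structure you should pursue rather than an appeal to Section~3.
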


\begin{thm}\label{thm4.3} Let $\Theta_{\sP_{\omega}}=\phi^*\Theta_{\sU_{C,\,\omega}}$. Then
$H^i(\sP_{\omega},\Theta_{\sP_{\omega}})=0$ for any $i>0$. In particular, $H^1(\sU_{C,\,\omega}, \Theta_{\sU_{C,\,\omega}})=0.$
\end{thm}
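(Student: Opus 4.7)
The plan is to parallel the argument of Theorem \ref{thm4.1} used for smooth curves: reduce the vanishing on $\sP_{\omega}$ to a Künneth analysis on an étale Galois cover of the form $J^0_{\wt C}\times \sP^L_{\omega}$, kill the higher cohomology on the fiber via Theorem \ref{thm3.14} and Theorem \ref{thm2.5}(2), and kill it on the Jacobian factor via Mumford vanishing.

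First I would exhibit the action of $J^0_{\wt C}$ on $\sP_{\omega}$ by $L_0\cdot(E,Q)=(E\otimes L_0,Q)$; composing with the inclusion of a fixed-determinant fiber yields a morphism
\[
\pi\colon J^0_{\wt C}\times \sP^L_{\omega}\longrightarrow \sP_{\omega},\qquad (L_0,(E,Q))\longmapsto (E\otimes L_0,Q),
\]
which I claim is an étale Galois cover of degree $r^{2\wt g}$ (with $\wt g$ the genus of $\wt C$): the deck group is the $r$-torsion subgroup $J^0_{\wt C}[r]$ acting freely via $\eta\cdot(L_0,(E,Q))=(L_0\otimes\eta,(E\otimes\eta^{-1},Q))$. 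By Theorem \ref{thm3.14}, the fiber $\sP^L_{\omega}$ is of globally $F$-regular type, so Theorem \ref{thm2.5}(2) yields
\[
H^q(\sP^L_{\omega},\sO_{\sP^L_{\omega}})=0\quad\text{and}\quad H^q(\sP^L_{\omega},\sL)=0\qquad(q>0,\ \sL\text{ nef}).
\]
In particular $h^1(\sO_{\sP^L_{\omega}})=0$, which forces $\Pic^0(\sP^L_{\omega})$ to be a finite group scheme and hence, by the see-saw principle, gives the Künneth splitting
\[
\Pic(J^0_{\wt C}\times \sP^L_{\omega})=\Pic(J^0_{\wt C})\oplus\Pic(\sP^L_{\omega}).
\]
Consequently $\pi^*\Theta_{\sP_{\omega}}\cong \sL_1\boxtimes \sL_2$ for uniquely determined line bundles $\sL_1$ on $J^0_{\wt C}$ and $\sL_2$ on $\sP^L_{\omega}$.

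Next I would verify the ampleness of both factors. Since $\phi$ is finite (it is the normalization) and $\Theta_{\sU_{C,\omega}}$ is ample, $\Theta_{\sP_{\omega}}=\phi^*\Theta_{\sU_{C,\omega}}$ is ample on $\sP_{\omega}$ and hence $\pi^*\Theta_{\sP_{\omega}}$ is ample on the product. Restricting to the slices $\{L_0\}\times \sP^L_{\omega}$ and $J^0_{\wt C}\times\{(E,Q)\}$ then forces $\sL_2$ and $\sL_1$ respectively to be ample. The Künneth formula gives
\[
H^i(J^0_{\wt C}\times \sP^L_{\omega},\pi^*\Theta_{\sP_{\omega}})=\bigoplus_{p+q=i}H^p(J^0_{\wt C},\sL_1)\otimes H^q(\sP^L_{\omega},\sL_2),
\]
and every summand with $i>0$ vanishes: $H^p(J^0_{\wt C},\sL_1)=0$ for $p>0$ by Mumford vanishing for ample line bundles on abelian varieties, and $H^q(\sP^L_{\omega},\sL_2)=0$ for $q>0$ by Theorem \ref{thm2.5}(2). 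Because $\pi$ is étale in characteristic zero, the trace map exhibits $\Theta_{\sP_{\omega}}$ as a direct summand of $\pi_*\pi^*\Theta_{\sP_{\omega}}$, so $H^i(\sP_{\omega},\Theta_{\sP_{\omega}})$ is a direct summand of the (vanishing) group above and itself vanishes for $i>0$.

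Finally, the particular statement $H^1(\sU_{C,\omega},\Theta_{\sU_{C,\omega}})=0$ follows from $H^1(\sP_{\omega},\Theta_{\sP_{\omega}})=0$ combined with the injection $\phi^*\colon H^1(\sU_{C,\omega},\Theta_{\sU_{C,\omega}})\hookrightarrow H^1(\sP_{\omega},\Theta_{\sP_{\omega}})$ provided by Lemma \ref{lem4.2}. I expect the main delicate point to be the setup of the étale cover: one must check that tensoring by $L_0$ lifts to a well-defined action on the full GPS moduli space (including the quotient datum $Q$ at $x_1,x_2$, which involves choices of trivializations of the fibers $(L_0)_{x_i}$), that the induced $\pi$ is honestly étale of the claimed degree, and that $h^1(\sO_{\sP^L_{\omega}})=0$ combined with normality of $\sP^L_{\omega}$ really suffices to force the Künneth splitting $\pi^*\Theta_{\sP_{\omega}}\cong \sL_1\boxtimes \sL_2$.
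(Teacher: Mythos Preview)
Your argument is correct and parallels exactly what the paper does for Theorem~\ref{thm4.1} (smooth case) and Theorem~\ref{thm4.4} (reducible case), but it is \emph{not} the route the paper takes for Theorem~\ref{thm4.3}. The paper instead runs the Leray spectral sequence for the flat determinant morphism ${\rm Det}:\sP_{\omega}\to J^d_{\wt C}$ of \eqref{3.4}: the higher direct images $R^i{\rm Det}_*\Theta_{\sP_{\omega}}$ vanish because the fibers $\sP^L_{\omega}$ are of globally $F$-regular type (Theorem~\ref{thm3.14}), and the vanishing of $H^i(J^d_{\wt C},{\rm Det}_*\Theta_{\sP_{\omega}})$ is obtained from an explicit decomposition of the pushforward sheaf established elsewhere (Remark~4.2 of \cite{Su1}, Lemma~5.2 of \cite{Su3}).

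Your approach has the advantage of being self-contained within this paper: it avoids invoking the external decomposition theorem for ${\rm Det}_*\Theta_{\sP_{\omega}}$ and instead recycles the K\"unneth/trace argument already set up for Theorem~\ref{thm4.1}. The paper's approach, on the other hand, sidesteps the verifications you flag at the end (well-definedness of the $J^0_{\wt C}$-action on the GPS datum $Q$, and the see-saw splitting of $\pi^*\Theta_{\sP_{\omega}}$), since flatness of ${\rm Det}$ and the decomposition of ${\rm Det}_*\Theta_{\sP_{\omega}}$ are quoted directly. Note also that you do not actually need $\pi$ to be \'etale: finite and surjective between normal varieties in characteristic zero already gives the trace splitting $\sO_{\sP_{\omega}}\hookrightarrow \pi_*\sO_{J^0_{\wt C}\times\sP^L_{\omega}}$, which is all your last step requires.
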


\begin{proof} Let ${\rm Det}:\sP_{\omega}\to J^d_{\wt C}$ be the flat morphism defined in \eqref{3.5}. Then $H^i(\sP_{\omega},\Theta_{\sP_{\omega}})=0$
follows the facts that $R^i{\rm Det}_*\Theta_{\sP_{\omega}}=0$ by Theorem \ref{thm3.14} and $H^i(J^d_{\wt C},{\rm Det}_*\Theta_{\sP_{\omega}})=0$ by a decomposition of
${\rm Det}_*\Theta_{\sP_{\omega}}$ (see Remark 4.2 of \cite{Su1} or a more precise version in Lemma 5.2 of \cite{Su3}).
\end{proof}

When $C=C_1\cup C_2$ is a reducible one nodal curve, we have a stronger vanishing theorem on $\sU_{C,\,\omega}$ and $\sP_{\omega}$.

\begin{thm}\label{thm4.4} When $C$ is a reducible one nodal curve with two smooth irreducible components, let $\sP_{\omega}$ be the moduli spaces of semi-stable
GPS on $\wt C$ with parabolic structures determined by $\omega$. Then, for any ample line bundle $\wt\sL$ on $\sP_{\omega}$ and $i>0$, we have $H^i(\sP_{\omega},\wt\sL)=0.$ In particular,
$$H^1(\sU_{C,\,\omega},\, \sL)=0$$
holds for any ample line bundle $\sL$ on $\sU_{C,\,\omega}$.
\end{thm}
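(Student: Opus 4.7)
The argument splits into two stages: first, proving the vanishing $H^i(\sP_\omega,\wt\sL)=0$ for $i>0$ and any ample $\wt\sL$; second, deducing the vanishing on $\sU_{C,\omega}$ from Lemma \ref{lem4.2} together with the first stage.

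For the first stage, since $\sP_\omega$ is the disjoint union of the irreducible components $\sP_{\chi_1,\chi_2}$ (Theorem \ref{thm3.18}), it suffices to fix one such component. Choose a point $L=(L_1,L_2)\in J^{d_1}_{C_1}\times J^{d_2}_{C_2}$ lying in the image of $\sP_{\chi_1,\chi_2}$ under the determinant morphism, and consider the finite surjective morphism
$$f:\sP^L_\omega\times J^0_{\wt C}\longrightarrow \sP_{\chi_1,\chi_2},\qquad (E,L_0)\mapsto E\otimes L_0,$$
which exists by the discussion preceding \eqref{3.9}. Since we work in characteristic zero with normal source and target, the normalized trace map splits $\sO_{\sP_{\chi_1,\chi_2}}\hookrightarrow f_*\sO$, and consequently $H^i(\sP_{\chi_1,\chi_2},\wt\sL)$ injects into $H^i(\sP^L_\omega\times J^0_{\wt C},f^*\wt\sL)$ for every $i$.

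The crux is to decompose $f^*\wt\sL$ as an external tensor product $p_1^*\sL_1\otimes p_2^*\sL_2$, where $\sL_1$ is ample on $\sP^L_\omega$ and $\sL_2$ is ample on $J^0_{\wt C}$. By Theorem \ref{thm3.22} combined with Theorem \ref{thm2.5}, $\sP^L_\omega$ is normal with rational singularities and $H^1(\sP^L_\omega,\sO_{\sP^L_\omega})=0$; hence its Albanese variety is trivial, so every morphism from $\sP^L_\omega$ to an abelian variety is constant. Applying the seesaw principle to the morphism $\sP^L_\omega\to \Pic^0(J^0_{\wt C})$ classifying the restrictions of $f^*\wt\sL$ to the fibres of $p_1$ yields the desired decomposition; ampleness of $\sL_1$ and $\sL_2$ then follows from the ampleness of $f^*\wt\sL$ (finite pullback of ample). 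By Künneth,
$$H^i(\sP^L_\omega\times J^0_{\wt C},\,f^*\wt\sL)=\bigoplus_{p+q=i}H^p(\sP^L_\omega,\sL_1)\otimes H^q(J^0_{\wt C},\sL_2).$$
Theorem \ref{thm2.5}(2), applied via Theorem \ref{thm3.22}, kills the first factor for $p>0$, while Kodaira vanishing on the abelian variety kills the second for $q>0$. Thus the product cohomology vanishes in positive degree, and the same holds for $\sP_{\chi_1,\chi_2}$, and hence for $\sP_\omega$.

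For the second stage, the normalization $\phi:\sP_\omega\to\sU_{C,\omega}$ is finite, so $\phi^*\sL$ remains ample on $\sP_\omega$. The first stage gives $H^1(\sP_\omega,\phi^*\sL)=0$, and Lemma \ref{lem4.2} supplies the injectivity $\phi^*:H^1(\sU_{C,\omega},\sL)\hookrightarrow H^1(\sP_\omega,\phi^*\sL)$, forcing $H^1(\sU_{C,\omega},\sL)=0$. The main obstacle throughout is establishing the K\"unneth-type decomposition $f^*\wt\sL=p_1^*\sL_1\otimes p_2^*\sL_2$, which hinges on showing that $\sP^L_\omega$ has trivial Albanese; this is precisely where the globally $F$-regular type of $\sP^L_\omega$ from Theorem \ref{thm3.22}, together with the rational singularities and the vanishing $H^1(\sO)=0$ it entails, enters essentially. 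Once that step is in hand, the remainder reduces to K\"unneth plus the standard vanishing theorems already available.
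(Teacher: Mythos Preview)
Your proposal is correct and follows essentially the same route as the paper: reduce to a single component $\sP_{\chi_1,\chi_2}$, pass via the finite map $f:\sP^L_\omega\times J^0_{\wt C}\to\sP_{\chi_1,\chi_2}$, decompose $f^*\wt\sL$ as an external tensor product using $H^1(\sP^L_\omega,\sO)=0$ (Theorem \ref{thm3.22} plus Theorem \ref{thm2.5}), then apply K\"unneth and the separate vanishing on each factor; finally invoke Lemma \ref{lem4.2} for the statement on $\sU_{C,\omega}$. The paper's proof is terser --- it simply says the argument of Theorem \ref{thm4.1} goes through via Theorem \ref{thm3.22} --- whereas you spell out the seesaw/Albanese justification for the decomposition of $f^*\wt\sL$ and the trace-map splitting that lets cohomology descend along $f$; these are exactly the ingredients the paper is implicitly using.
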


\begin{proof} By Lemma \ref{lem4.2}, it is enough to show $H^i(\sP_{\omega},\wt\sL)=0$ for any ample line bundle $\wt\sL$ and $i>0$.

When $C=C_1\cup C_2$, the moduli space
$\sP_{\omega}$ is a disjoint union of
$$\{\sP_{d_1,d_2}\}_{d_1+d_2=d}$$ where $\sP_{d_1,d_2}$ consists of GPS $(E,Q)$ with $d_i=deg(E|_{C_i})$. It is enough to consider
$\sP_{\omega}=\sP_{d_1,d_2}$, thus we have the flat morphism
$${\rm Det}:\sP_{\omega}\to J^d_{\wt C}=J^{d_1}_{C_1}\times
J^{d_2}_{C_2}=J^d_C$$ and $J^0_{\wt C}=J^0_{C_1}\times
J^0_{C_2}=J^0_C$ acts on $\sP_{\omega}$ by
$$((E,Q),\sN)\mapsto (E\otimes\pi^*\sN, Q\otimes \sN_{x_0})$$
where $\pi:\wt C\to C$ is the normalization of $C$.
Let $\sP_{\omega}^L={\rm Det}^{-1}(L)$ and consider the morphism
$f:\sP_{\omega}^L\times J^0_C\to \sP_{\omega}$, which is a finite morphism (see the proof of Lemma 6.6 in \cite{Su3} where we figure out a line bundle
$\Theta$ on $\sP_{\omega}$ such that its pullback $f^*(\Theta)$ is ample). Thus it is enough to prove the vanishing theorem on $\sP_{\omega}^L\times J^0_C$,
which follows the same arguments in the proof of Theorem \ref{thm4.1} by using Theorem \ref{thm3.22}.
\end{proof}

\begin{nota}\label{nota4.5} For $\mu=(\mu_1,\cdots,\mu_r)$  with $0\le\mu_r\le\cdots\le\mu_1<
k,$ let $$\{d_i=\mu_{r_i}-\mu_{r_i+1}\}_{1\le i\le l}$$ be the
subset of nonzero integers in
$\{\mu_i-\mu_{i+1}\}_{i=1,\cdots,r-1}.$ We define
$$ r_i(x_1)=r_i,\quad d_i(x_1)=d_i,\quad l_{x_1}=l,$$
$$ r_i(x_2)=r-r_{l-i+1},\quad d_i(x_2)=d_{l-i+1},\quad l_{x_2}=l,$$ and for $j=1,2$, we set
$$\aligned
\vec a(x_j)&=\left(\mu_r,\mu_r+d_1(x_j),\cdots,\mu_r+
\sum^{l_{x_j}-1}_{i=1}d_i(x_j),\mu_r+\sum^{l_{x_j}}_{i=1}d_i(x_j)\right)\\
\vec n(x_j)&=(r_1(x_j),r_2(x_j)-r_1(x_j),
\cdots,r_{l_{x_j}}(x_j)-r_{l_{x_j}-1}(x_j),r-r_{l_{x_j}}(x_j)).\endaligned$$
\end{nota}

\begin{thm}\label{thm4.6} For any $\omega=(k,\,\{\vec n(x),\vec a(x)\}_{x\in I})$ such that
$$\text{$\ell:=\frac{k\chi-\sum_{x\in
I}\sum^{l_x}_{i=1}d_i(x)r_i(x)}{r}$ is an integer}$$
where $\chi=d+r(1-g)$, let $D_g(r,d,\omega)=dim H^0(\sU_{C,\,\omega},\Theta_{\sU_{C,\,\omega}})$. Then, for any positive integers $c_1$, $c_2$ and partitions $I=I_1\cup I_2$, $g=g_1+g_2$
such that $\ell_j=\frac{c_j\ell}{c_1+c_2}$ ($j=,\,2$) are integers, we have
\ga{4.3}{D_g(r,d,\omega)=\sum_{\mu}D_{g-1}(r,d,\omega^{\mu})}
\ga{4.4}{D_g(r,d,\omega)=\sum_{\mu}D_{g_1}(r,d_1^{\mu},\omega_1^{\mu})\cdot D_{g_2}(r,d_2^{\mu},\omega_2^{\mu})}
where $\mu=(\mu_1,\cdots,\mu_r)$ runs through $0\le\mu_r\le\cdots\le
\mu_1< k$ and $$\omega^{\mu}=(k, \{\vec n(x),\,\vec a(x)\}_{x\in I\cup\{x_1,\,x_2\}}),\,\,\omega_j^{\mu}=(k, \{\vec n(x),\,\vec a(x)\}_{x\in I_j\cup\{x_j\}})$$
with $\vec n(x_j)$, $\vec a(x_j)$ ($j=1,\,2$) determined by $\mu$ (Notation \ref{nota4.5}) and
$$d_1^{\mu}=n^{\omega}_1+\frac{1}{k}\sum^r_{i=1}
\mu_i+r(g_1-1),\quad d^{\mu}_2=n^{\omega}_2+r-\frac{1}{k}\sum^r_{i=1}
\mu_i+r(g_2-1)$$
$$n^{\omega}_j=\frac{1}{k}\left(r\frac{c_j}{c_1+c_2}\ell+\sum_{x\in
I_j}\sum^{l_x}_{i=1}d_i(x)r_i(x)\right)\,\,\,(j=1,\,\,2).$$
\end{thm}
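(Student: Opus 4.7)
The plan is to prove both recurrence relations by deforming the smooth curve $C$ to a curve with exactly one node (irreducible for \eqref{4.3}, reducible for \eqref{4.4}), showing that the relevant dimensions are constant in the family, and then invoking the factorization theorems in \cite{Su1} and \cite{Su2} on the central fibre. The vanishing theorems (Theorem \ref{thm4.3} and Theorem \ref{thm4.4}), which we already have as consequences of globally $F$-regular type, are precisely what allows us to propagate dimensions through the degeneration.

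More precisely, for \eqref{4.3} I would take a one-parameter family $\pi:\mathcal{C}\to \Delta$ whose generic fibre is a smooth curve of genus $g$ and whose central fibre $C_0$ is irreducible with exactly one node. Parabolic data $\omega$ along sections $\{x\}_{x\in I}$ extend over $\Delta$, yielding a flat family $\sU_{\mathcal{C},\omega}\to\Delta$ of moduli spaces together with a relative theta bundle $\Theta$. By Corollary 4.8 of \cite{Su3}, $t\mapsto\dim H^0(\sU_{C_t,\omega_t},\Theta_{\sU_{C_t,\omega_t}})$ is constant for $t\ne 0$; Theorem \ref{thm4.3} gives $H^1(\sU_{C_0,\omega_0},\Theta_{\sU_{C_0,\omega_0}})=0$, so upper semi-continuity of $h^0$ together with the fact that $\chi=h^0-h^1$ is locally constant in a flat family forces
\[
D_g(r,d,\omega)=\dim H^0(\sU_{C_0,\omega_0},\Theta_{\sU_{C_0,\omega_0}}).
\]
Since $\sU_{C_0,\omega_0}$ is seminormal with normalization $\phi:\sP_{\omega_0}\to \sU_{C_0,\omega_0}$ and $\phi^*\Theta_{\sU_{C_0,\omega_0}}=\Theta_{\sP_{\omega_0}}$, Theorem \ref{thm4.3} combined with Lemma \ref{lem4.2} lets us pass the computation to $\sP_{\omega_0}$. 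The factorization theorem of \cite{Su1} then identifies $H^0(\sP_{\omega_0},\Theta_{\sP_{\omega_0}})$ with $\bigoplus_{\mu} H^0(\sU_{\wt C,\omega^\mu},\Theta)$ where $\wt C$ is the normalization of $C_0$ (smooth of genus $g-1$) and $\mu$ runs through the indexing set in Notation \ref{nota4.5}; taking dimensions yields \eqref{4.3}.

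For \eqref{4.4} the strategy is parallel but with a reducible degeneration. Choose a family whose central fibre is $C_0=C_1\cup C_2$ with $C_i$ smooth of genus $g_i$ meeting at one node, the sections $I_j$ specializing into $C_j$, and whose relative polarization restricts to $\sO_{C_j}(1)$ of degree $c_j$ on $C_j$ so that $\ell_j=\tfrac{c_j\ell}{c_1+c_2}$ are the prescribed integers. Again constancy of $\dim H^0$ for $t\ne 0$ comes from Corollary 4.8 of \cite{Su3}, while Theorem \ref{thm4.4} provides $H^1(\sU_{C_0,\omega_0},\Theta)=0$, so $D_g(r,d,\omega)=\dim H^0(\sU_{C_0,\omega_0},\Theta_{\sU_{C_0,\omega_0}})$. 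Pulling back to $\sP_{\omega_0}$ (which is now a disjoint union $\bigsqcup_{d_1+d_2=d}\sP_{d_1,d_2}$) and applying the factorization theorem of \cite{Su2} on each component expresses $H^0(\sP_{d_1,d_2},\Theta)$ as a direct sum indexed by $\mu$ of tensor products $H^0(\sU_{C_1,\omega_1^\mu},\Theta)\otimes H^0(\sU_{C_2,\omega_2^\mu},\Theta)$, where the degrees $d_1^\mu,d_2^\mu$ and the flag/weight data $\omega_j^\mu$ on the extra point $x_j$ are precisely those dictated by Notation \ref{nota4.5}. Summing over $\mu$ and matching the integer conditions $\ell_j\in\mathbb{Z}$ yields \eqref{4.4}.

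The main obstacle is the rigorous justification that $\dim H^0$ is invariant across the degeneration: semi-continuity alone only gives one inequality at $t=0$, and one needs the vanishing of $H^1$ on the \emph{central} fibre $\sU_{C_0,\omega_0}$, which for a non-normal moduli space is genuinely subtle. This is exactly the content that Theorem \ref{thm4.3} and Theorem \ref{thm4.4} supply via the globally $F$-regular type technology of Section 3, and this is the point at which all the previous work is used. A secondary bookkeeping issue, which is routine but must be checked carefully, is that the combinatorial bijection between the indexing sets in the statement and the indexing sets appearing in the factorization theorems of \cite{Su1} and \cite{Su2}—in particular the formulae for $d_j^\mu$ in terms of $n_j^\omega$ and $\sum_i\mu_i$—matches on the nose, which reduces to the determinant/Euler characteristic computation encoded in the definition of $n_j^\omega$ in \eqref{3.8}.
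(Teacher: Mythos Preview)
Your proposal is correct and follows essentially the same approach as the paper: degenerate $C$ in a flat family to a one-nodal curve (irreducible for \eqref{4.3}, reducible for \eqref{4.4}), use the vanishing theorems (Theorem~\ref{thm4.1} on the smooth fibres and Theorem~\ref{thm4.3}/\ref{thm4.4} on the central fibre) to force constancy of $\dim H^0$, and then read off the recurrence from the factorization theorems of \cite{Su1} and \cite{Su2}. The only minor difference is that the paper invokes Theorem~\ref{thm4.1} directly (rather than Corollary~4.8 of \cite{Su3}) to get constancy for $t\neq 0$, and leaves the passage through $\sP_{\omega_0}$ and the combinatorial matching implicit in the citation of the factorization theorems.
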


\begin{proof} Consider a flat family of projective $|I|$-pointed curves $\sX\to T$ and a relative ample line bundle $\sO_{\sX}(1)$ of relative degree $c$ such that a fiber $\sX_{t_0}:=X$ ($t_0\in T$) is a connected curve with only one node $x_0\in X$ and $\sX_t$ ($t\in T\setminus\{t_0\}$) are smooth curves with a fiber $\sX_{t_1}=C$ ($t_1\neq t_0$). Then one can associate a family of moduli spaces $\sM\to T$ and a line bundle
$\Theta$ on $\sM$ such that each fiber $\sM_t=\sU_{\sX_t,\,\omega}$ is the moduli space of semi-stable parabolic sheaves on $\sX_t$ and $\Theta|_{\sM_t}=\Theta_{\sU_{\sX_t},\,\omega}$. By degenerating $C$ to an irreducible $X$ and using Theorem \ref{thm4.1} and Theorem \ref{thm4.3}, the recurrence relation \eqref{4.3} is nothing but the Factorization theorem of \cite{Su1}.
If we degenerate $C$ to a reducible curve $X=X_1\cup X_2$ with $g(X_i)=g_i$ and choose the relative ample line bundle $\sO_{\sX}(1)$ such that $c_i={\rm deg}(\sO_{\sX}(1)|_{X_i}$, by using
Theorem \ref{thm4.1} and Theorem \ref{thm4.4}, the recurrence relation \eqref{4.4} is exactly the Factorization theorem of \cite{Su2}.
\end{proof}

In the recurrence relation \eqref{4.4}, the degree $d_1^{\mu}$ varies with $\mu$ and $D_{g_1}(r,d_1^{\mu},\omega_1^{\mu})$ makes sense only when $d_1^{\mu}$ is an integer, which are not
convenient for applications. To remedy it, we are going to study the behavior of $D_g(r,d,\omega)$ under Hecke transformation.

Given a parabolic sheaf $E$ with quasi-parabolic structure
$$E_z=Q_{l_z+1}(E)_z\twoheadrightarrow Q_{l_z}(E)_z\twoheadrightarrow \cdots \cdots \twoheadrightarrow Q_1(E)_z\twoheadrightarrow Q_0(E)_z=0$$
of type $\vec n(z)=(n_1(z),...,n_{l_z+1}(z))$ at $z\in I$ and weights $$ 0= a_1(z) <a_2(z)< \cdots <a_{l_z+1}(z)<k.$$
Let $F_i(E)_z=ker\{E_z\twoheadrightarrow Q_i(E)_z\}$ and  $E'=ker\{E\twoheadrightarrow Q_1(E)_z\}$.
Then, at $z\in I$, $E'$ has a natural quasi-parabolic structure
\ga{4.5} {E'_z\twoheadrightarrow F_1(E)_z\twoheadrightarrow Q_{l_z-1}(E')_z \twoheadrightarrow\cdots\twoheadrightarrow Q_1(E')_z\twoheadrightarrow 0}
of type $\vec n'(z)=(n'_1(z),...,n'_{l_z+1}(z))=(n_2(z),...,n_{l_z+1}(z),n_1(z))$, where
$$Q_i(E')_z\subset Q_{i+1}(E)_z$$ is the image of
$F_1(E)_z$ under $E_z\twoheadrightarrow Q_{i+1}(E)_z$. It is easy to see
$$Q_i(E')_z\cong F_1(E)_z/F_{i+1}(E)_z.$$

\begin{defn}\label{defn4.7} The parabolic sheaf $E'$ with given weight
$$0= a'_1(z)<\cdots < a'_{l_z+1}(z)< k$$
is called Hecke transformation of the parabolic sheaf $E$ at $z\in I$, where $a'_{l_z+1}(z)=k-a_2(z)$
and $a'_i(z)=a_{i+1}(z)-a_2(z)$ for $2\leq i\leq l_z$.
\end{defn}

\begin{lem}\label{lem4.8}  The parabolic bundle $E'$ is semistable (resp., stable) iff $E$ is semistable (resp., stable).
\end{lem}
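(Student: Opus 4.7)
The plan is to reduce both directions of the equivalence to a single identity
\[ {\rm par}\chi(F') = {\rm par}\chi(F) - \frac{a_2(z)}{k}\,r(F), \]
valid for every subsheaf $F \subseteq E$ with $E/F$ torsion free, where $F' := F \cap E'$ is equipped with the parabolic structure induced from $E'$. Applied to $F=E$ this also gives ${\rm par}\chi(E') = {\rm par}\chi(E) - \frac{a_2(z)}{k}\,r$. Since $r(F')=r(F)$ and $r(E')=r$, both numerators in the slope comparison shift by the same additive constant $a_2(z)/k$, so
\[ \frac{{\rm par}\chi(F)}{r(F)} \le \frac{{\rm par}\chi(E)}{r} \iff \frac{{\rm par}\chi(F')}{r(F')} \le \frac{{\rm par}\chi(E')}{r}, \]
and similarly with strict inequality. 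To match up subsheaves in both directions, I would observe that $F \mapsto F' = F \cap E'$ sends any $F \subseteq E$ with $E/F$ torsion free to $F' \subseteq E'$ with $E'/F'$ torsion free (the latter embeds into $E/F$); conversely, given $F' \subseteq E'$ with $E'/F'$ torsion free, the saturation $F$ of $F'$ in $E$ satisfies $F \cap E' = F'$, since $F/F'$ is torsion while $E'/F'$ is torsion free. The equivalence of (semi)stability then follows immediately.

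To prove the identity, I would start from the short exact sequence $0 \to F' \to F \to Q_1(F)_z \to 0$, which gives $r(F')=r(F)$ and $\chi(F')=\chi(F)-n^F_1(z)$. For the parabolic correction at $z$, I would use the identification $Q_i(E')_z \cong F_1(E)_z/F_{i+1}(E)_z$ from just before Definition \ref{defn4.7}, together with $Q_i(F)_z = F_z/(F_z \cap F_i(E)_z)$, to compute the image of $F'_z$ in $Q_i(E')_z$ as $(F_z \cap F_1(E)_z)/(F_z \cap F_{i+1}(E)_z)$, of dimension $n^F_2(z) + \cdots + n^F_{i+1}(z)$. This gives the induced type
\[ n^{F'}_i(z) = n^F_{i+1}(z) \quad (1 \le i \le l_z), \qquad n^{F'}_{l_z+1}(z) = n^F_1(z). \]
Substituting these and the new weights $a'_1=0$, $a'_i(z)=a_{i+1}(z)-a_2(z)$ for $2 \le i \le l_z$, $a'_{l_z+1}(z)=k-a_2(z)$ into the defining formula of ${\rm par}\chi$, and using $\sum_i n^F_i(z) = r(F)$, the weighted sum telescopes: the $\chi$-correction $-n^F_1(z)$ is exactly cancelled by a $+n^F_1(z)$ coming from the new top weight $k-a_2(z)$, and the residual $-(a_2(z)/k)\sum_i n^F_i(z) = -a_2(z)r(F)/k$ is the required shift.

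The principal bookkeeping obstacle is the type computation at $z$, i.e.\ correctly identifying how the fiber of $F' = F \cap E'$ sits inside each quotient of $E'_z$ under the cyclic relabelling that defines $E'$. Once this dimension count is established, the final telescoping is clean, and the uniformity of the shift $a_2(z)r(F)/k$ (independent of the particular subsheaf $F$) is precisely what forces both semistability and stability to be preserved under the Hecke transformation.
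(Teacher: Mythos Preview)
Your proof is correct and follows essentially the same route as the paper: both reduce to the identity ${\rm par}\chi(F')={\rm par}\chi(F)-\frac{a_2(z)}{k}\,r(F)$ via the type computation $n^{F'}_i(z)=n^F_{i+1}(z)$, $n^{F'}_{l_z+1}(z)=n^F_1(z)$, and conclude that the parabolic slope difference is preserved. Your treatment is slightly more complete in that you explicitly handle the converse direction (starting from $F'\subset E'$ and recovering $F$ by saturation in $E$), a step the paper leaves implicit.
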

\begin{proof} $E'$ is defined by the exact sequence of sheaves
$$0\to E'\xrightarrow{i} E\xrightarrow{\delta} Q_1(E)_z\to 0$$
such that $E_z\xrightarrow{\delta_z} Q_1(E)_z$ is the surjective homomorphism
$$E_z=Q_{l_z+1}(E)_z\twoheadrightarrow Q_{l_z}(E)_z\twoheadrightarrow \cdots \cdots \twoheadrightarrow Q_1(E)_z.$$
For any sub-bundle $\sF\subset E$ of rank $r_1$, let $Q_i(E)^{\sF}_z\subset Q_i(E)_z$ be the image of $\sF_z\subset E_z$ under
$E_z\twoheadrightarrow Q_{l_z}(E)_z\twoheadrightarrow \cdots \cdots \twoheadrightarrow Q_i(E)_z$, and the sub-bundle $\sF'\subset E'$ is defined by exact sequence
of sheaves:
$$0\to \sF'\xrightarrow{i} \sF\xrightarrow{\delta} Q_1(E)^{\sF}_z\to 0.$$
Let $Q_{l_z}(E')_z^{\sF'}\subset F_1(E)_z$ and $Q_i(E')_z^{\sF'}\subset Q_i(E')_z$ ($1\le i<l_z$) be the image of $\sF'_z\subset E'_z$
under $E'_z\twoheadrightarrow F_1(E)_z$ and $E'_z\twoheadrightarrow Q_i(E')_z$, which are the surjections in \eqref{4.5}.
Since $Q_{l_z}(E')_z^{\sF'}=ker\{\sF_z\xrightarrow{\delta_z} Q_1(E)_z^{\sF}\}$,
$$ker\{Q_{i}(E')_z^{\sF'}\twoheadrightarrow Q_{i-1}(E')_z^{\sF'}\}=ker\{Q_{i+1}(E)_z^{\sF}\twoheadrightarrow Q_{i}(E)_z^{\sF}\}$$
for $1\le i\le l_z$. In particular, $n_i^{\sF'}(z)=n^{\sF}_{i+1}(z)$,
$n^{\sF'}_{l_z+1}(z)=n^{\sF}_1(z)$,
$${\rm p}ar_{\omega'}\chi(\sF')={\rm p}ar_{\omega}\chi(\sF)-\frac{r_1}{k}a_2(z), \quad {\rm p}ar_{\omega'}\chi(E')={\rm p}ar_{\omega}\chi(E)-\frac{r}{k}a_2(z).$$
Thus ${\rm p}ar_{\omega'}\mu(\sF')-{\rm p}ar_{\omega'}\mu(E')={\rm p}ar_{\omega}\mu(\sF)-{\rm p}ar_{\omega}\mu(E)$, which proves the lemma.
\end{proof}

\begin{lem}\label{lem4.9} For parabolic data $\omega=(k,\{\vec n(x),\vec a(x)\}_{x\in I})$, let
\ga{4.6} {\omega'=(k, \{\vec n(x), \vec a(x)\}_{x\neq z \in I}\cup \{\vec a'(z), \vec n'(z)\})}
where $\vec n'(z)==(n'_1(z),...,n'_{l_z+1}(z))=(n_2(z),...,n_{l_z+1}(z),n_1(z))$,
$$\vec a'(z)=(0, a_2'(z), ..., a'_{l_z+1}(z)),\quad a'_{l_z+1}(z)=k-a_2(z)+a_1(z)$$
and $a'_i(z)=a_{i+1}(z)-a_2(z)+a_1(z)$ for $2\le i\le l_z$. Then
$$D_g(r,d,\omega)=D_g(r,d-n_1(z),\omega').$$
\end{lem}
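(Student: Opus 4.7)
The plan is to realize the Hecke transformation $E\mapsto E'$ from Definition \ref{defn4.7} as an algebraic isomorphism of moduli spaces
$$\phi:\sU_{C,\omega}(r,d)\xrightarrow{\sim}\sU_{C,\omega'}(r,d-n_1(z)),$$
and then match the theta line bundles up to a twist pulled back from the Jacobian, which is absorbed by the invariance of $D_g$ within the algebraic family of theta bundles (a consequence of Theorem \ref{thm4.1}). Without loss of generality one may assume $a_1(z)=0$: shifting all weights at $z$ by a constant changes neither the moduli problem, the differences $d_i(z)$, nor $\ell$, and the extra $+a_1(z)$ in the definition of $\omega'$ is precisely what makes the Hecke construction covariant with respect to such shifts.

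First I would lift the Hecke correspondence to families. For a family of $(E,\vec Q)$ with data $\omega$, set $\sE':=\ker(\sE\twoheadrightarrow\sQ_{z,1})$, equip $\sE'$ at $z$ with the flag \eqref{4.5} and weights $\vec a'(z)$, and preserve the structure at the other $x\in I$. The inverse is an elementary transformation: for $(E',\vec Q')$ with data $\omega'$, the top parabolic quotient $E'_z\twoheadrightarrow Q'_{l_z}(E')_z$ of dimension $r-n_1(z)$ reconstructs $E\supset E'$ with $E/E'\cong Q_1(E)_z$ a skyscraper of length $n_1(z)$ at $z$, recovering the missing bottom of the $\omega$-flag. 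By Lemma \ref{lem4.8}, $\omega$-semistability of $E$ is equivalent to $\omega'$-semistability of $E'$, so the correspondence descends to the desired $\phi$, which covers the Jacobian translation $L\mapsto L\otimes\sO_C(-n_1(z)\,z)$.

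Next I would compare the theta bundles on the parameter space. The universal sequence $0\to\sE'\to\sE\to\sQ_{z,1}\to 0$ (with $\sQ_{z,1}$ supported on $\{z\}\times\sR$) gives $(\det R\pi_{\sR}\sE')^{-k}=(\det R\pi_{\sR}\sE)^{-k}\otimes\det(\sQ_{z,1})^{k}$, while $\sQ'_{z,l_z}=F_1(\sE)_z$ and $\sQ'_{z,i}=F_1(\sE)_z/F_{i+1}(\sE)_z$ for $1\le i<l_z$ yield $\det\sQ'_{z,l_z}=\det\sE_z\otimes\det(\sQ_{z,1})^{-1}$ and $\det\sQ'_{z,i}=\det\sQ_{z,i+1}\otimes\det(\sQ_{z,1})^{-1}$. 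Combining with $d'_i(z)=d_{i+1}(z)$ for $i<l_z$, $d'_{l_z}(z)=k-a_{l_z+1}(z)$, and the identity $\ell'=\ell-k+a_{l_z+1}(z)$ obtained from $\chi'=\chi-n_1(z)$, a direct calculation yields
$$\Theta_{\sR,\omega'}\cong\Theta_{\sR,\omega}\otimes(\det\sE_z\otimes\det\sE_y^{-1})^{k-a_{l_z+1}(z)}.$$
The twist factor is ${\rm SL}(V)$-equivariantly pulled back from $J^d_C$ via the determinant morphism, and so descends to give $\phi^*\Theta_{\sU_{C,\omega'}}\cong\Theta_{\sU_{C,\omega}}\otimes{\rm Det}^*\sN$ for an explicit numerically trivial $\sN\in\Pic(J^d_C)$.

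Finally, $\Theta_{\sU_{C,\omega}}\otimes{\rm Det}^*\sN$ lies in the algebraic family of ample theta line bundles on $\sU_{C,\omega}$ parameterized by numerically trivial twists from the Jacobian. Theorem \ref{thm4.1} supplies the vanishing $H^i=0$ for $i>0$ on every member of this family, so $\dim H^0$ is locally constant and hence constant on the connected Picard parameter. Therefore
$$D_g(r,d,\omega)=\dim H^0(\sU_{C,\omega},\Theta_{\sU_{C,\omega}}\otimes{\rm Det}^*\sN)=\dim H^0(\sU_{C,\omega'},\Theta_{\sU_{C,\omega'}})=D_g(r,d-n_1(z),\omega').$$
The main obstacle will be the bookkeeping in the theta-bundle comparison: tracking the cyclic permutation of the flag, the weight normalization (which explains the $+a_1(z)$ shift in Lemma \ref{lem4.9}), and the corresponding shift in $\ell$, as well as verifying that the residual ${\rm Det}^*\sN$ genuinely lies in the connected algebraic family of ample theta bundles where Theorem \ref{thm4.1} applies.
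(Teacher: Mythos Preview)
Your approach is essentially the same as the paper's: construct the Hecke morphism $\mathrm{H}_z$ on families, verify it is an isomorphism via the inverse elementary transformation, and compare theta bundles. The paper simply asserts $\mathrm{H}_z^*\Theta_{\sU_{C,\omega'}}=\Theta_{\sU_{C,\omega}}$ without writing out the computation, whereas you carry out the determinant-of-cohomology and flag bookkeeping explicitly and find the residual factor $(\det\sE_z\otimes\det\sE_y^{-1})^{k-a_{l_z+1}(z)}$; this is absorbed exactly as you say, since $\Theta_{\sU_{C,\omega}}$ is only defined as a member of an algebraic family (cf.\ the remark following \eqref{4.1}--\eqref{4.2}) and $D_g$ is constant across that family by Theorem~\ref{thm4.1}. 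So your extra step is not a different route but a more careful justification of the paper's one-line claim, and your computation checks out. The only place you are slightly less explicit than the paper is the inverse construction: the paper spells out how to rebuild the flag on $E$ from the flag on $E'$ via $F_{i+1}(E)_z=i_z(F_i(E')_z)$, whereas you summarize this as ``an elementary transformation''; this is fine for a proof sketch but would need to be written out to match the paper's level of detail.
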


\begin{proof} One can also define the Hecke transformation of a family of parabolic sheaves (flat family yielding flat family, and preserve semistability). Thus, for $z\in I$, we have a morphism $$\mathrm{H}_z: \sU_{C,\,\omega}=\sU_C(r, d,\omega)\rightarrow \sU_C(r, d-n_1(z), \omega')=\sU_{C,\,\omega'}$$ such that $\mathrm{H}_z^{\ast}\Theta_{\sU_{C,\,\omega'}}=\Theta_{\sU_{C,\,\omega}}$.
In fact, $\mathrm{H}_z$ is an isomorphism. For any parabolic bundle $E'$ with quasi-parabolic structure of type $\vec n'(z)$, let
$$F_{i}(E')_z=ker\{E'_z\twoheadrightarrow Q_i(E')_z\}\quad (1\le i\le l_z).$$
Then there exists a bundle $E$ and a homomorphism $E'\xrightarrow{i} E$ such that $F_{l_z}(E')_z=ker\{E'_z\xrightarrow{i_z}E\}.$
Let $F_1(E)_z=i_z(E'_z)\subset E_z$ and
$$F_{i+1}(E)_z=i_z(F_i(E')_z).$$
Then the quasi-parabolic structure of $E$ at $z\in I$ given by
$$0=F_{l_z+1}(E)_z\subset F_{l_z}(E)_z\subset F_{l_z-1}(E)_z \subset \cdots\subset F_1(E)_z\subset E_z$$
has of type $\vec n(z)=(n_1(z),...,n_{l_z+1}(z))$ and the weights $\vec a(z)$ are determined by
$\vec a'(z)$ (let $a_1(z)=0$, $a_2(z)=k-a'_{l_z+1}(z)$ and $a_{i+1}(z)=a'_i(z)+k-a'_{l_z+1}(z)$ for $2\le i\le l_z$).
The construction can be applied to a family of parabolic sheaves, which induces $H_z^{-1}$.
\end{proof}

\begin{lem}\label{lem4.10} For $\omega=(k,\{\vec n(x),\vec a(x)\}_{x\in I})$, if $n_1(z)>1$, let
\ga{4.7} {\omega''=(k, \{\vec n(x), \vec a(x)\}_{x\neq z \in I}\cup \{\vec a''(z), \vec n''(z)\})}
where $\vec a''(z)=(0,a_2(z),\cdots, a_{l_z+1}(z), k)$ (we assume $a_1(z)=0$) and
$$\vec n''(z)=(n_1(z)-m,n_2(z),\cdots, n_{l_z+1}(z), m), \quad 1<m<n_1(z).$$
Then $D_g(r, d-n_1(z),\omega')=D_g(r,d-m,\omega'').$
\end{lem}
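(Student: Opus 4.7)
The plan is to apply the Hecke transformation of Lemma~\ref{lem4.9} to $\omega''$ at $z$, and to identify the resulting parabolic data with $\omega'$ up to a Grassmann fibration along which $H^0$ of the theta line bundle is preserved.

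Given an $\omega''$-parabolic bundle $E''$ of degree $d-m$, set $E^{\sharp}=\ker\{E''\twoheadrightarrow Q_1''(E'')_z\}$, where the first flag quotient at $z$ has dimension $n''_1(z)=n_1(z)-m$. Then $\deg E^{\sharp}=(d-m)-(n_1(z)-m)=d-n_1(z)$, and the same argument as in Lemmas~\ref{lem4.8} and~\ref{lem4.9} yields an isomorphism of moduli
\[
H_z:\sU_C(r,d-m,\omega'')\xrightarrow{\sim}\sU_C(r,d-n_1(z),(\omega'')')
\]
pulling theta to theta. Here $(\omega'')'$ is obtained by the Hecke formula of Lemma~\ref{lem4.9} applied to $\omega''$: at $z$ the flag type is $(n_2(z),\ldots,n_{l_z+1}(z),m,n_1(z)-m)$ and the weights are $(0,\,a_3(z)-a_2(z),\ldots,a_{l_z+1}(z)-a_2(z),\,k-a_2(z),\,k-a_2(z))$.

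The key observation is that the last two weights of $(\omega'')'$ coincide, so the gap $d^{(*)}_{l_z+1}(z)=0$. In the explicit formulas of Section~3 for theta and for parabolic $\chi$, this gap appears as the exponent of the corresponding $\det(Q^{(*)}_{z,l_z+1})$-factor and as a coefficient in the parabolic $\chi$-sum; both contributions therefore vanish, so the refining flag piece of dimension $m$ sitting inside the innermost dimension-$n_1(z)$ piece is irrelevant for semistability and for the theta bundle. Coalescing the two equal-weight pieces into a single piece of dimension $n_1(z)$ with weight $k-a_2(z)$ recovers precisely $\omega'$, and the forgetful morphism
\[
\pi:\sU_C(r,d-n_1(z),(\omega'')')\longrightarrow\sU_C(r,d-n_1(z),\omega')
\]
is a Grassmann bundle with fibre $\mathrm{Gr}(m,n_1(z))$ satisfying $\Theta_{(\omega'')'}=\pi^{*}\Theta_{\omega'}$. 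Since $\pi_{*}\sO=\sO$ for a Grassmann bundle, the projection formula yields $H^0(\Theta_{(\omega'')'})=H^0(\Theta_{\omega'})$, and combining with $H_z$ gives
\[
D_g(r,d-m,\omega'')=D_g(r,d-n_1(z),(\omega'')')=D_g(r,d-n_1(z),\omega').
\]

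The main obstacle is justifying the coalescing step rigorously: that the Hecke transformation of Lemma~\ref{lem4.9} remains valid as an isomorphism of moduli pulling theta to theta in the boundary case where $(\omega'')'$ has two consecutive equal weights, and that the forgetful map $\pi$ is indeed a Grassmann bundle with $\Theta_{(\omega'')'}=\pi^{*}\Theta_{\omega'}$. Both reduce to the identity $d^{(*)}_{l_z+1}(z)=0$ in the explicit formulas, but verifying the moduli-theoretic interpretation (rather than just the numerical vanishing of the relevant exponents) is the only step that requires care.
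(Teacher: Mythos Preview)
Your underlying construction is the same as the paper's: take $E'=\ker\{E\twoheadrightarrow Q_1(E)_z\}$ and compare parabolic slopes. But your factorization through an intermediate moduli space $\sU_C(r,d-n_1(z),(\omega'')')$ runs into a genuine inconsistency, not merely a detail requiring care. The paper shows directly that $\varphi:\sU_{C,\omega''}\to\sU_{C,\omega'}$ is an \emph{isomorphism}: since $a''_{l_z+2}(z)=k$, the sub-parabolic-sheaf $(E',Q'_{\bullet})\subset (E,Q_{\bullet+1})$ has the same parabolic slope, so every $\omega''$-semistable bundle is strictly semistable and $s$-equivalent to $(E',Q'_{\bullet})\oplus (\,_zQ_1(E)_z,Q_1(E)_{\bullet+1})$; hence the coarse moduli collapses and $\varphi$ is bijective on closed points, thus an isomorphism between normal projective varieties. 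In particular $\dim\sU_{C,\omega''}=\dim\sU_{C,\omega'}$.

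This is incompatible with your picture. If $H_z$ were an isomorphism and $\pi$ a Grassmann bundle with fibre $\mathrm{Gr}(m,n_1(z))$, then $\dim\sU_{C,\omega''}=\dim\sU_{C,\omega'}+m(n_1(z)-m)>\dim\sU_{C,\omega'}$. So at least one of your two claims must fail, depending on how you define the intermediate moduli: if $\sU_{C,(\omega'')'}$ is the GIT quotient for the degenerate (pulled-back) polarization, then $\mathrm{Proj}\bigoplus H^0(\sR^{(*)},L^n)^G=\mathrm{Proj}\bigoplus H^0(\sR',L'^n)^G$ and $\pi$ is the identity, not a Grassmann bundle; if instead you force it to be the flag bundle, then $H_z$ is no longer an isomorphism onto it. Either way, your argument as written does not close. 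The fix is precisely the paper's $s$-equivalence computation, which replaces the would-be Grassmann fibre by a collapse at the level of $s$-equivalence classes. (Your strategy could be salvaged by working with $G$-invariant sections on the parameter spaces $\sR^{ss}$ rather than on the coarse moduli, where the Hecke map is a genuine isomorphism and the forgetful map is a genuine Grassmann bundle; but that is a different argument from the one you wrote.)
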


\begin{proof}
 For a semistable parabolic bundle $E$ with parabolic structures determined by $\omega''$, let its quasi-parabolic structure at $z\in I$ is given by
 $$E_z\twoheadrightarrow Q_{l_z+1}(E)_z \twoheadrightarrow Q_{l_z}(E)_z\twoheadrightarrow \cdots \twoheadrightarrow Q_1(E)_z\twoheadrightarrow Q_0(E)_z=0.$$
Let $E'=ker\{E\twoheadrightarrow Q_1(E)_z\}$, then $E'$ has quasi-parabolic structure
$$E'_z \twoheadrightarrow Q'_{l_z}(E)_z\twoheadrightarrow \cdots \twoheadrightarrow Q'_1(E)_z\twoheadrightarrow Q'_0(E)_z=0$$
of type $\vec n'(z)=(n_2(z),...,n_{l_z+1}(z),n_1(z))$ at $z\in I$, where
$$Q'_i(E)_z=ker\{Q_{i+1}(E)_z\twoheadrightarrow Q_1(E)_z\}, \quad (1\leq i\leq l_z).$$
Then we show that $E'$ is a semistable parabolic bundle with parabolic structure determined by $\omega'$ if and only if
$E$ is a semistable parabolic bundle with parabolic structure determined by $\omega''$. In fact, by direct computation, we have
$$\chi(E)+\frac{1}{k}\sum^{l_z+2}_{i=1}a_i''(z)n_i''(z)=\frac{r}{k}a_2(z)+\chi(E')+\frac{1}{k}\sum^{l_z+1}_{i=1}a_i'(z)n_i'(z),$$
which implies that ${\rm p}ar_{\omega''}\mu(E)=\frac{a_2(z)}{k}+{\rm p}ar_{\omega'}\mu(E')$. For any sub-bundle $\sF\subset E$, let $\sF'\subset E'$ be the sub-bundle
such that $$0\to \sF'\to\sF\to Q_1(E)^{\sF}_z\to 0$$ is an exact sequence of sheaves. Then ${\rm p}ar_{\omega''}\mu(\sF)={\rm p}ar_{\omega'}\mu(\sF')+\frac{a_2(z)}{k}$.
Thus $E'$ is semistable if and only if $E$ is semistable. The construction can be applied to a family of parabolic sheaves, which induces
$$\sU_{C,\,\omega''}=\mathcal{U}_C(r,d-m,\omega'')\xrightarrow{\varphi} \sU_{C,\,\omega'}=\mathcal{U}_C(r,d-n_1(z),\omega').$$
One check directly that $\varphi^*\Theta_{\sU_{C,\,\omega'}}=\Theta_{\sU_{C,\,\omega''}}$ (i.e., it pulls back an ample line bundle to an
ample line bundle), which implies that $\varphi$ is a finite surjective morphism. To show that $\varphi$ is a injective morphism, which implies that
$\varphi$ is an isomorphism since $\sU_{C,\,\omega'}$ and $\sU_{C,\,\omega'}$ are normal projective varieties, we note
$Q'_i(E)_z=Q_{i+1}(E)^{E'}_z\subset Q_{i+1}(E)_z$ is the image of $E'_z\to E_z\twoheadrightarrow Q_{i+1}(E)_z$. Then $(E',Q'_{\bullet}(E)_z)=(E',Q_{\bullet+1}(E)^{E'}_z)$
is a parabolic subsheaf of $(E,Q_{\bullet+1}(E)_z)$ and we have exact sequence
$$0\to (E',Q_{\bullet+1}(E)^{E'}_z) \to (E,Q_{\bullet+1}(E)_z)\to ( \,_zQ_1(E)_z, Q_1(E)_{\bullet+1})\to 0$$
of parabolic sheaves, where
$$Q_1(E)_{\bullet+1}): Q_1(E)_z\twoheadrightarrow Q_1(E)_z\twoheadrightarrow\cdots\twoheadrightarrow Q_1(E)_z\twoheadrightarrow 0.$$
By direct computations, we have
$${\rm p}ar_{\omega''}\mu((E',Q_{\bullet+1}(E)^{E'}_z))={\rm p}ar_{\omega''}\mu((E,Q_{\bullet+1}(E)_z)).$$
Thus $(E,Q_{\bullet+1}(E)_z)$ is $s$-equivalent to $$(E',Q_{\bullet+1}(E)^{E'}_z)\oplus  ( \,_zQ_1(E)_z, Q_1(E)_{\bullet+1}),$$
which implies that $\varphi$ is a injective morphism, and we are done.
\end{proof}

\begin{rmks} (1) The moduli spaces $\sU_{C,\,\omega''}$ and theta line bundles $\Theta_{\sU_{C,\,\omega''}}$ are constructed
in \cite{Su3} for the case $a_{l_z+1}(z)-a_1(z)=k$, vanishing theorems can be generalized to this case.

(2) Let $\omega''=H_z^m(\omega)$, we will simply call $H_z^m(\omega)$ a Hecke transformation of $\omega$ at $z\in I$. Then
\ga{4.8} { D_g(r,d,\omega)=D_g(r, d-m, H^m_z(\omega)).}
\end{rmks}

Now we can prove another version of recurrence relation \eqref{4.4}, in which the degree $d$ is kept unchanged.

\begin{thm}\label{thm4.12} For any partitions $g=g_1+g_2$ and $I=I_1\cup I_2$, let
$$W_k=\{\,\lambda=(\lambda_1,...,\lambda_r)\,|\, 0=\lambda_r\le\lambda_{r-1}\le\cdots\le\lambda_1\le k\,\}$$
$$W'_k=\left\{\,\lambda\in W_k\,\,\mid\,\,\left(\sum_{x\in
I_1}\sum^{l_x}_{i=1}d_i(x)r_i(x)+\sum^r_{i=1}\lambda_i\right) \equiv 0({\rm mod}\,\,r)\right\}.$$
Then we have the following recurrence relation
\ga{4.9} {D_g(r,d,\omega)=\sum_{\mu\in W'_k} D_{g_1}(r, 0,\omega_1^{\mu})\cdot D_{g_2}(r,d,\omega_2^{\mu}).}
\end{thm}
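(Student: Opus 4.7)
The plan is to derive \eqref{4.9} from the factorization \eqref{4.4} of Theorem \ref{thm4.6} by applying Hecke transformations (Lemmas \ref{lem4.9} and \ref{lem4.10}) to normalize the degrees on the two factors to $0$ and $d$ respectively. First I would fix positive integers $c_1,c_2$ so that $\ell_j=c_j\ell/(c_1+c_2)\in\Z$ for $j=1,2$, and apply \eqref{4.4} to obtain
\begin{equation*}
D_g(r,d,\omega)=\sum_\mu D_{g_1}(r,d_1^\mu,\omega_1^\mu)\cdot D_{g_2}(r,d_2^\mu,\omega_2^\mu),
\end{equation*}
summed over $\mu=(\mu_1,\ldots,\mu_r)$ with $0\le\mu_r\le\cdots\le\mu_1<k$ subject to the integrality of $d_1^\mu,d_2^\mu$, and with $d_1^\mu+d_2^\mu=d$.

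Next, for each $\mu$ in this sum, I would apply iterated Hecke transformations to $(r,d_1^\mu,\omega_1^\mu)$ at the gluing point $x_1$. By \eqref{4.8}, together with the fact that $\mathrm{H}_{x_1}$ is an isomorphism in Lemma \ref{lem4.9} (so that shifts in either direction are available) and Lemma \ref{lem4.10} enlarging the admissible weight range to include the boundary case $a_{l_z+1}(x_1)=k$, the first factor becomes $D_{g_1}(r,0,\omega_1^\lambda)$, where the quasi-parabolic data at $x_1$ has been brought into the normalized form of Notation \ref{nota4.5} for a unique $\lambda=(\lambda_1,\ldots,\lambda_r)$ with $\lambda_r=0$ and $\lambda_1\le k$. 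Running the analogous procedure at $x_2$ converts the second factor to $D_{g_2}(r,d,\omega_2^\lambda)$ with the \emph{same} $\lambda$: this synchronization is forced because the two flag structures at $x_1,x_2$ are encoded by a common partition in Notation \ref{nota4.5}, and Hecke cyclically permutes $\vec n(x_j)$ while shifting $\vec a(x_j)$ by the formulas of Definition \ref{defn4.7} symmetrically at $x_1$ and $x_2$.

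Finally I would check that $\mu\mapsto\lambda$ is a bijection onto $W'_k$. For $\omega_1^\lambda$ with $\chi_1=r(1-g_1)$, a telescoping calculation using Notation \ref{nota4.5} and $\lambda_r=0$ gives $\sum_{i=1}^{l_{x_1}}d_i(x_1)r_i(x_1)=\sum_{i=1}^r\lambda_i$, so the integer $\ell$ associated to $\omega_1^\lambda$ exists precisely when
\begin{equation*}
\sum_{x\in I_1}\sum_{i=1}^{l_x}d_i(x)r_i(x)+\sum_{i=1}^r\lambda_i\equiv 0\pmod r,
\end{equation*}
which is the defining congruence of $W'_k$. Under the Hecke correspondence this is equivalent to the integrality of $d_1^\mu$ in the original sum, so the index sets match and \eqref{4.9} follows.

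The main obstacle will be the combinatorial bookkeeping of the second step: tracking explicitly how many iterations of Hecke (possibly passing through the boundary case of Lemma \ref{lem4.10}) act on $(\vec n(x_j),\vec a(x_j))$ encoded by $\mu$, and verifying that the normalizations at the two gluing points produce a common $\lambda\in W'_k$. Once this synchronization is established, the rest is a direct comparison of integrality conditions.
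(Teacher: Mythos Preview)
Your proposal is correct and follows essentially the same approach as the paper: start from \eqref{4.4}, apply iterated Hecke transformations (Lemmas \ref{lem4.9} and \ref{lem4.10}) at the gluing points to normalize the degrees to $0$ and $d$, and then identify the resulting index set with $W'_k$ via the integrality/congruence condition. The paper makes the ``combinatorial bookkeeping'' you anticipate completely explicit by writing down the cyclic map $H^1(\mu)=(k-\mu_{r-1}+\mu_r,\mu_1-\mu_{r-1},\ldots,\mu_{r-2}-\mu_{r-1},0)$ on partitions, setting $\phi(\mu)=H^{r-i^\mu}(\mu)$ where $d_1^\mu\equiv i^\mu\pmod r$, and then verifying directly (using the closed formula $|H^m(\mu)|=km-r\mu_{r-m}+|\mu|$) that $\phi:Q_k\to W'_k$ is a bijection and that $\omega_j^{\phi(\mu)}$ is a Hecke transform of $\omega_j^\mu$ for $j=1,2$ simultaneously---which is exactly the synchronization you flag.
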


\begin{proof} Let $P_k=\{\mu=(\mu_1,...,\mu_r)\,|\, 0\le\mu_r\le\cdots\le\mu_1< k\,\}$, by the recurrence relation \eqref{4.4}, we have
$$D_g(r,d,\omega)=\sum_{\mu\in Q_k}D_{g_1}(r,d_1^{\mu},\omega_1^{\mu})\cdot D_{g_2}(r,d_2^{\mu},\omega_2^{\mu})$$
where $Q_k=\{\mu=(\mu_1,\cdots, \mu_r)\in P_k\,|\, d_1^{\mu}\in \mathbb{Z}\}$. Recall definition of $d^{\mu}_j$ in
Theorem \ref{thm4.6}, which are integers such that $d_1^{\mu}+d_2^{\mu}=d$ and
\ga{4.10} {k(d_1^{\mu}+r)=k\cdot n_1^{\omega}+|\mu|,\quad |\mu|=\sum^r_{i=1}\mu_i}
where $n_1^{\omega}$ is the rational number defined in Theorem \ref{thm4.6}.

For $\mu=(\mu_1,\cdots, \mu_r),$ $0\leq \mu_r\leq \cdots \leq \mu_1\leq k$, let
$$H^1(\mu)=(k-\mu_{r-1}+\mu_r,\mu_1-\mu_{r-1},\mu_2-\mu_{r-1},\cdots, \mu_{r-2}-\mu_{r-1},0),$$
$H^{m}(\mu):=H^1(H^{m-1}(\mu))$ for $2\leq m\leq r$. Then, when $1\le m<r$,
$$H^m(\mu)_j=\left\{
\begin{array}{llll} k-\mu_{r-m}+\mu_{r-m+j} &\mbox{when $1\le j\le m$}\\
\mu_{j-m}-\mu_{r-m}&\mbox{when
$j>m $}\end{array}\right.$$
and $H^r(\mu)=(\mu_1-\mu_r,\mu_2-\mu_r,...,\mu_{r-1}-\mu_r,0)$. Moreover
\ga{4.11} { |H^m(\mu)|=\left\{
\begin{array}{llll} k\cdot m-r\cdot \mu_{r-m}+|\mu| &\mbox{when $m<r$}\\
-r\cdot\mu_r+|\mu|&\mbox{when
$m=r $}\end{array}\right. }
Let $0\le i^{\mu}<r$ be the unique integer such that $d_1^{\mu}\equiv i^{\mu}({\rm mod}\,\,r)$, let
$$\phi(\mu):=H^{r-i^{\mu}}(\mu).$$
Then, by \eqref{4.11}, it is easy to see that we have a map
\ga{4.12} {\phi:Q_k\to W'_k.}
One can check that $\omega_i^{\phi(\mu)}$ is a Hecke transformation of $\omega_i^{\mu}$ ($i=1,\,2$),
thus $D_{g_1}(r,d^{\mu}_1,\omega_1^{\mu})=D_{g_1}(r,0,\omega_1^{\phi(\mu)})$, $D_{g_2}(r,d_2^{\mu},\omega_2^{\mu})=D_{g_2}(r,d,\omega_2^{\phi(\mu)})$
by Lemma \ref{lem4.9} and Lemma \ref{lem4.10}. To prove the recurrence relation \eqref{4.9}, it is enough to show that
$\phi$ is bijective.

To prove the injectivity of $\phi$, let $\phi(\mu)=\phi(\mu')$, it is enough to show $i^{\mu}=i^{\mu'}$.
If both $i^{\mu}$ and $i^{\mu'}$ are nonzero, note $|\phi(\mu)|=k(r-i^{\mu})-r\mu_{i^{\mu}}+|\mu|$,
by $\phi(\mu)=\phi(\mu')$ and \eqref{4.10}, there exists a $q\in\mathbb{Z}$ such that
$$r\cdot(\mu'_{i^{\mu'}}-\mu_{i^{\mu}})=k(d_1^{\mu'}-i^{\mu'}-(d_1^{\mu}-i^{\mu}))=k\cdot r\cdot q.$$
Thus $k>|\mu'_{i^{\mu'}}-\mu_{i^{\mu}}|=k|q|$, which implies $q=0$ and $\mu'_{i^{\mu'}}=\mu_{i^{\mu}}$. If $i^{\mu}\neq i^{\mu'}$, let
$a=i^{\mu}-i^{\mu'}>0$,  then formula
\ga{4.13} {\phi(\mu)_j=\left\{
\begin{array}{llll} k-\mu_{i^{\mu}}+\mu_{j+i^{\mu}} &\mbox{when $1\le j\le  r-i^{\mu}$}\\
\mu_{j-r+i^{\mu}}-\mu_{i^{\mu}}&\mbox{when
$j>r-i^{\mu} $}\end{array}\right.}
implies $\mu_a=k+\mu_r'\ge k$, which is a contradiction since $\mu\in Q_k$. If
$i^{\mu}=0$, $i^{\mu'}$ must be zero. Otherwise, the same arguments imply $\mu'_{i^{\mu'}}=\mu_r$ and $\mu_j=k+\mu'_{i^{\mu'}+j}$
for all $1\le j\le r-i^{\mu'}$.

To prove that $\phi$ is surjective, by using \eqref{4.11}, \eqref{4.10} becomes
\ga{4.14} {\frac{k\cdot n_1^{\omega}+|\phi(\mu)|}{r}=\left\{
\begin{array}{llll} k\cdot\frac{d_1^{\mu}+2r-i^{\mu}}{r}-\mu_{i^{\mu}} &\mbox{when $i^{\mu}>0$}\\
k\cdot\frac{d_1^{\mu}+r}{r}-\mu_r &\mbox{when $i^{\mu}=0 $}\end{array}\right.}
For any $\lambda=(\lambda_1,...,\lambda_{r-1},0)\in W'_k$, there are unique integers $q^{\lambda}$ and $0\le r^{\lambda}<k$ such that
$$\frac{k\cdot n_1^{\omega}+|\lambda|}{r}=k\cdot q^{\lambda}-r^{\lambda}.$$
If $\lambda_1+r^{\lambda}<k$, let $\mu=(\lambda_1+r^{\lambda}, ..., \lambda_{r-1}+r^{\lambda},r^{\lambda})\in P_k$, then $d_1^{\mu}=r(q^{\lambda}-1)$
by \eqref{4.10}. Thus $i^{\mu}=0$ and $\phi(\mu)=\lambda$. If $\lambda_1+r^{\lambda}\ge k$, since $\lambda_r+r^{\lambda}<k$, there exists an unique $1\le i_0\le r-1$ such that $$\lambda_{i_0}+r^{\lambda}\ge k, \qquad \lambda_{i_0+1}+r^{\lambda}< k.$$
Let $\mu_j=\lambda_{i_0+j}+r^{\lambda}$ ($1\le j\le r-i_0$) and $\mu_{r-i_0+j}=\lambda_j+r^{\lambda}-k$ ($1\le j\le i_0$).
Then $\mu=(\mu_1,...,\mu_r)\in Q_k$ with $d_1^{\mu}=r(q^{\lambda}-1)-i_0$ and $i^{\mu}=r-i_0$. It is easy to see that $\phi(\mu)=\lambda$.

\end{proof}

\section {A finite dimensional proof of Verlinde formula}

As an application of the recurrence relation \eqref{4.3} and \eqref{4.9}, we prove a closed formula of $D_g(r,d,\omega)$ (the so called Verlinde formula).
Recall
\ga{5.1} {S_{\lambda}(z_1,...,z_r)=\frac{|z_j^{\lambda_i+r-i}|}{|z_j^{r-i}|}=\frac{|z_j^{\lambda_i+r-i}|}{\Delta(z_1,...,z_r)}} is the so called
Schur polynomial of $\lambda=(\lambda_1\ge\lambda_2\ge\cdots\ge\lambda_r\ge 0)$,
$$\Delta(z_1,...,z_r)=\prod_{i<j}(z_i-z_j).$$
We give here an detail proof of some identities of Schur polynomials.

\begin{prop}\label{prop5.1} For $\vec v=(v_1,\ldots,v_r)$, $0\le v_r<\cdots <v_1<r+k$, let
$$S_{\lambda}\left({\rm exp}\,2\pi i\frac{\vec v}{r+k}\right)=S_{\lambda}(e^{2\pi i\frac{v_1}{r+k}},...,e^{2\pi i\frac{v_r}{r+k}}),$$
$P_k=\{\mu=(\mu_1,...,\mu_r)\,|\, 0\le\mu_r\le\cdots\le\mu_1< k\,\}$, $|\mu|:=\sum\mu_i$.
Then
\ga{5.2} {\aligned&\sum_{\mu\in P_k}S_{\mu}\left({\rm exp}\,2\pi i\frac{\vec v}{r+k}\right)\cdot S_{\mu^*}\left({\rm exp}\,2\pi i\frac{\vec v}{r+k}\right)\\&=
{\rm exp}\left(2\pi i\frac{k}{r+k}|\vec v|\right)\cdot\frac{k(r+k)^{r-1}}{\prod_{i<j}\left(2\sin\,\pi \frac{v_i-v_j}{r+k}\right)^2},\endaligned}
let $W_k=\{\mu=(\mu_1,...,\mu_r)\,|\,0=\mu_r\le\mu_{r-1}\le\cdots<\mu_1\le k\,\}$, we have
\ga{5.3}{\aligned&\sum_{\mu\in W_k} S_{\mu}\left({\rm exp}\,2\pi i\frac{\vec v}{r+k}\right)\cdot S_{\mu^*}\left({\rm exp}\,2\pi i\frac{\vec {v}}{r+k}\right)\\&={\rm exp}\left(2\pi i\frac{k}{r+k}|\vec v|\right)\cdot\frac{r(r+k)^{r-1}}{\prod_{i<j}\left(2\sin\,\pi \frac{v_i-v_j}{r+k}\right)^2}\endaligned}
and, if $\vec v\neq\vec{v'}$,
\ga{5.4}{\aligned \sum_{\mu\in W_k}
&{\rm exp}\,2\pi i\frac{-|\mu|\cdot |\vec v|}{r(r+k)}\cdot
{\rm exp}\,2\pi i\frac{-|\mu^*|\cdot |\vec {v'}|}{r(r+k)}\cdot\\& S_{\mu}\left({\rm exp}\,2\pi i\frac{\vec v}{r+k}\right)\cdot S_{\mu^*}\left({\rm exp}\,2\pi i\frac{\vec {v'}}{r+k}\right)
=0.\endaligned}

\end{prop}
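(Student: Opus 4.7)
The plan is to derive all three identities from the discrete orthogonality of the Weyl character formula at the $(r+k)$-th roots of unity, which is the combinatorial input behind the Kac-Peterson formula and hence the Verlinde formula.

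First I would replace each Schur polynomial by its Weyl determinantal form
$$S_\lambda(z_1,\ldots,z_r) = \frac{\det\bigl(z_j^{\lambda_i + r - i}\bigr)}{\Delta(z)},$$
evaluated at $z_j = \zeta^{v_j}$ with $\zeta = e^{2\pi i/(r+k)}$. At these points the Vandermonde denominator becomes $|\Delta(z)|^2 = \prod_{i<j}\bigl(2\sin\pi\tfrac{v_i-v_j}{r+k}\bigr)^2$ up to an overall phase that cancels in the products $S_\mu \cdot S_{\mu^*}$ (since $\mu$ and $\mu^*$ are dual partitions of total degree $\approx rk$). Expanding the two determinants produces a double sum over permutations $(\sigma,\tau)$ of monomials $\zeta^{v_j(\mu_{\sigma(j)} + r - \sigma(j)) + v'_j(\mu^*_{\tau(j)} + r - \tau(j))}$ in the case of \eqref{5.4}, and its $\vec v=\vec v'$ analogue in the cases of \eqref{5.2} and \eqref{5.3}.

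For \eqref{5.4}, I would interchange the order of summation so that the sum over $\mu \in W_k$ is evaluated first. Using $\mu_r = 0$, the remaining components $\mu_1,\ldots,\mu_{r-1}$ range independently over $\{0,1,\ldots,k\}$ once the ordering constraint is relaxed via an alternating sum over permutations. After accounting for the phase factors $e^{-2\pi i|\mu||\vec v|/(r(r+k))}$ and $e^{-2\pi i|\mu^*||\vec v'|/(r(r+k))}$, which shift all exponents into the $\mathfrak{sl}_r$ coroot lattice so that one can sum freely modulo $r+k$, the inner sums factor into geometric sums $\sum_{a=0}^{r+k-1} \zeta^{a n}$, each equal to $(r+k)\,\delta_{n\equiv 0}$. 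The assumption $\vec v \neq \vec v'$ forces at least one such exponent $n$ to be nonzero modulo $r+k$, so the total sum vanishes. For \eqref{5.2} and \eqref{5.3} the same procedure with $\vec v = \vec v'$ selects the diagonal permutations $\sigma = \tau$. Each diagonal contribution, after the geometric-sum evaluation, yields $(r+k)^{r-1}$ (one factor of $r+k$ for each of the $r-1$ free $\mathfrak{sl}_r$ coordinates) together with a combinatorial multiplicity: for $P_k$, the extra parameter $\mu_r \in \{0,\ldots,k-1\}$ contributes the factor $k$, while for $W_k$ (with $\mu_r = 0$ fixed) the $r$ cyclic permutations of $\vec v$ that give the same diagonal contribution produce the factor $r$. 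The global phase $\exp\bigl(2\pi i k|\vec v|/(r+k)\bigr)$ comes from tracking the total degree $|\mu|+|\mu^*|=rk$ of $S_\mu\cdot S_{\mu^*}$.

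I expect the main obstacle to be the bookkeeping of phases and normalisations: pinning down the exact identity relating $S_{\mu^*}(\zeta^{\vec v})$ to $\overline{S_\mu(\zeta^{\vec v})}$ under the complementation convention used to define $\mu^*$, verifying that the multiplicity count gives precisely $k$ in \eqref{5.2} and $r$ in \eqref{5.3} with no stray factors, and checking that the shifts by $e^{-2\pi i|\mu||\vec v|/(r(r+k))}$ are exactly what is needed to convert a $\mathrm{U}(r)$-character sum into an $\mathrm{SU}(r)$-orthogonality. Once these alignments are in place, the three identities become equivalent facets of the unitarity of the Kac-Peterson $S$-matrix for $\widehat{\mathfrak{sl}}_r$ at level $k$, a statement which can also be verified directly by induction on $r$ using the Pieri rule if a purely combinatorial argument is preferred.
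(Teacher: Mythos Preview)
Your overall strategy---rewrite the Schur polynomials via the Weyl determinant, expand, and then invoke character orthogonality for the cyclic group $\mathbb{Z}/(r+k)$---is exactly what the paper does, and the reduction of \eqref{5.4} to orthogonality of distinct characters on the ``$\mathrm{SU}(r)$'' torus $\{\sum t_i=0\}$ is correct. For \eqref{5.2} and \eqref{5.3} the paper also uses $S_{\mu^*}(z)=\overline{S_\mu(z)}\cdot(z_1\cdots z_r)^k$ at $|z_j|=1$ to reduce to a sum of $|S_\mu|^2$, and then interprets $\sum_\mu |J(e^{\vec v})(t_\mu)|^2$ as a character inner product on a finite abelian group, exactly as you sketch.

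Where your plan goes wrong is in the bookkeeping that produces the constants $k$ and $r$. The heuristic ``the extra parameter $\mu_r\in\{0,\dots,k-1\}$ contributes the factor $k$'' is not correct: if you normalise $\mu\in P_k$ by subtracting $\mu_r$, the resulting $\lambda$ has $\lambda_1<k-\mu_r$, so the fibres over different $\lambda$ have different sizes and you do not get a clean factor of $k$. Likewise ``$r$ cyclic permutations of $\vec v$'' is not what yields the factor $r$ in \eqref{5.3}. The paper's device is to enlarge $P_k$ to $\bar P_k=\{0\le\mu_r\le\cdots\le\mu_1\le k\}$, so that $\mu\mapsto t_\mu=\exp 2\pi i(\mu+\rho)/(r+k)$ gives a bijection between $\bar P_k$ and $T_k^{\mathrm{reg}}/\mathfrak{S}_r$ for the full group $T_k\cong(\mathbb{Z}/(r+k))^r$; character orthogonality on $T_k$ then gives $(r+k)^r$. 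The boundary piece $P'_k=\bar P_k\setminus P_k=\{\mu_1=k\}$ (which is in bijection with $W_k$ via $\mu\mapsto\mu^*$) is handled by the same argument on the subgroup $T'_k=\{t_1=1\}\cong(\mathbb{Z}/(r+k))^{r-1}$, giving $r(r+k)^{r-1}$; this is simultaneously \eqref{5.3} and, after subtraction, the factor $k(r+k)^{r-1}=(r+k)^r-r(r+k)^{r-1}$ in \eqref{5.2}. So your plan is salvageable, but you should replace the hand-waved multiplicity count by this enlarge-and-subtract argument.
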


\begin{proof} To prove \eqref{5.2}, since $\mathbb{S}_{\mu^*}(V)={\rm det}(V)^k\otimes \mathbb{S}_{\mu}(V^*)$, we have
$$S_{\mu^*}\left({\rm exp}\,2\pi i\frac{\vec v}{r+k}\right)=\overline{S_{\mu}\left({\rm exp}\,2\pi i\frac{\vec v}{r+k}\right)}{\rm exp}\left(2\pi i\frac{k}{r+k}\sum_{i=1}^{r}v_i\right).$$ Thus it is enough to show that
\ga{5.5}{\aligned&\sum_{\mu\in P_k}S_{\mu}\left({\rm exp}\,2\pi i\frac{\vec v}{r+k}\right)\cdot \overline{S_{\mu}\left({\rm exp}\,2\pi i\frac{\vec v}{r+k}\right)}\\&=
\frac{k(r+k)^{r-1}}{\prod_{i<j}\left(2\sin\,\pi \frac{v_i-v_j}{r+k}\right)^2}.\endaligned}

For $\lambda=(\lambda_1,...,\lambda_r)$, the functions $e^{\tau(\lambda)}$, $J(e^{\lambda})$ are defined by
$$\aligned &e^{\tau(\lambda)}(\mathrm{diag}(t_1, \cdots,t_r)):=t_1^{\lambda_{\tau(1)}}\cdot \cdots\cdot t_r^{\lambda_{\tau(r)}}\\& J(e^{\lambda})(\mathrm{diag}(t_1, \cdots,t_r)):=\sum_{\tau\in \mathfrak{S}_r}\epsilon(\tau)e^{\tau(\lambda)}(\mathrm{diag}(t_1, \cdots,t_r)),\endaligned$$
where $\tau(\lambda)=(\lambda_{\tau(1)},...,\lambda_{\tau(r)})$, $\mathfrak{S}_r$ is the symmetric group. Let
$$\Delta(\vec v)=\prod_{i<j}(e^{2\pi i\frac{v_i}{r+k}}-e^{2\pi i\frac{v_j}{r+k}})$$
and $\rho=(r-1, r-2, ..., 0)$. By expansion of determinant, we have
$$\aligned &S_{\mu}\left({\rm exp}\,2\pi i\frac{\vec v}{r+k}\right)=\frac{1}{\Delta(\vec v)}\sum_{\tau\in \mathfrak{S}_r}\epsilon(\tau)e^{2\pi i\frac{\mu_1+r-1}{r+k}v_{\tau(1)}}
\cdot\cdots\cdot e^{2\pi i\frac{\mu_r}{r+k}v_{\tau(r)}}\\=&\frac{1}{\Delta(\vec v)}\sum_{\tau\in \mathfrak{S}_r}\epsilon(\tau)e^{\tau(\vec v)}\left({\rm exp}\,2\pi i\frac{\mu+\rho}{r+k}\right)=\frac{1}{\Delta(\vec v)}J(e^{\vec v})(t_{\mu}),\endaligned$$
where $t_{\mu}={\rm exp}\,2\pi i\frac{\mu+\rho }{r+k}$.
By $\Delta(\vec v)\overline{\Delta(\vec v)}=\prod_{i<j}\left(2\sin\,\pi \frac{v_i-v_j}{r+k}\right)^2$, we have
\ga{5.6} {\aligned &S_{\mu}\left({\rm exp}\,2\pi i\frac{\vec v}{r+k}\right)\cdot \overline{S_{\mu}\left({\rm exp}\,2\pi i\frac{\vec v}{r+k}\right)}
=\\&\frac{1}{\prod_{i<j}\left(2\sin\,\pi \frac{v_i-v_j}{r+k}\right)^2}J(e^{\vec v)})(t_{\mu})
\cdot \overline{J(e^{\vec v})(t_{\mu})}.\endaligned}

Let $T_k=\{\,t=\mathrm{diag}(e^{\frac{2\pi i}{r+k}t_1}, \cdots, e^{\frac{2\pi i}{r+k}t_r})\,|\, 0\le t_i<r+k\,\}\subset {\rm GL}(r)$ be the subgroup and
$T_k^{reg}=\{\,t\in T_k\,|\, t_i\ne t_j \text{if $i\neq j$}\,\}$. The group $\mathfrak{S}_r$ acts on $T_k$ by
$\tau(t)=\mathrm{diag}(e^{\frac{2\pi i}{r+k}t_{\tau(1)}}, \cdots, e^{\frac{2\pi i}{r+k}t_{\tau(r)}})$ and the functions
$$\aligned J(e^{\lambda})(t)&=\sum_{\tau\in \mathfrak{S}_r}\epsilon(\tau)e^{2\pi i\frac{\lambda_{\tau(1)}}{r+k}t_{1}}
\cdot\cdots\cdot e^{2\pi i\frac{\lambda_{\tau(r)}}{r+k}t_{r}}\\&=
\sum_{\tau\in \mathfrak{S}_r}\epsilon(\tau)e^{2\pi i\frac{\lambda_1}{r+k}t_{\tau(1)}}
\cdot\cdots\cdot e^{2\pi i\frac{\lambda_r}{r+k}t_{\tau(r)}}\endaligned$$
for any $\lambda=(\lambda_1,...,\lambda_r)$ are ant-symmetric functions, thus $J(e^{\lambda})(t)=0$ if $t\notin T_k^{reg}$. It is clear that $\mathfrak{S}_r$ acts
on $T_k^{reg}$ freely and
$$T^{reg}_k=\bigcup_{\mu\in \bar P_k}\mathfrak{S}_r\cdot t_{\mu},\quad t_{\mu}={\rm exp}\,2\pi i\frac{\mu+\rho }{r+k}.$$
The right hand side of \eqref{5.6} is a symmetric function, we have
$$\aligned &\sum_{\mu\in \bar P_k}S_{\mu}\left({\rm exp}\,2\pi i\frac{\vec v}{r+k}\right)\cdot \overline{S_{\mu}\left({\rm exp}\,2\pi i\frac{\vec v}{r+k}\right)}=
\frac{1}{|\mathfrak{S}_r|}\\&
\prod_{i<j}\left(2\sin\,\pi \frac{v_i-v_j}{r+k}\right)^{-2}\sum_{t\in T_k}J(e^{\vec v})(t)
\overline{J(e^{\vec v})(t)}\endaligned$$ where $\bar P_k=\{\mu=(\mu_1,...,\mu_r)\,|\,0\le\mu_r\le\cdots\le\mu_1\le k\}$.
To compute
$$\aligned&\sum_{t\in T_k}J(e^{\vec v})(t)
\overline{J(e^{\vec v})(t)}=\sum_{\tau,\sigma\in\mathfrak{S}_r}\epsilon(\tau)\cdot \epsilon(\sigma)\sum_{t\in T_k}
e^{\tau(\vec v)}(t)\cdot\overline{e^{\sigma(\vec v)}(t)},\endaligned$$
note $e^{\tau(\vec v)}$ and $e^{\sigma(\vec v)}$ are different character of $T_k$ when $\tau\neq \sigma$, we have
$$\sum_{t\in T_k}J(e^{\vec v})(t)\overline{J(e^{\vec v})(t)}=|\mathfrak{S}_r|\cdot |T_k|.$$  Thus
\ga{5.7}{\aligned&\sum_{\mu\in \bar P_k}S_{\mu}\left({\rm exp}\,2\pi i\frac{\vec v}{r+k}\right)\cdot \overline{S_{\mu}\left({\rm exp}\,2\pi i\frac{\vec v}{r+k}\right)}
\\&=\frac{(r+k)^r}{\prod_{i<j}\left(2\sin\,\pi \frac{v_i-v_j}{r+k}\right)^2}.\endaligned}

For $\mu\in P'_k:=\bar P_k\setminus P_k$, let
$t_{\mu}'=\mathrm{diag}(1, e^{2\pi i\frac{\mu_2+r-1}{r+k}}, \cdots, e^{2\pi i\frac{\mu_r+1}{r+k}} )$ and
$$T'_k=\{\,t=\mathrm{diag}(1, e^{\frac{2\pi i}{r+k}t_2}, \cdots, e^{\frac{2\pi i}{r+k}t_r})\,|\, 0\le t_i<r+k\,\}\subset T_k$$ be the subgroup and
${T'}_k^{reg}=T'_k\cap T^{reg}_k$. Then
$${T'}^{reg}_k=\bigcup_{\mu\in  P'_k}\mathfrak{S}_{r-1}\cdot t'_{\mu}.$$
Note $J(e^{\vec v})(t_{\mu})=e^{-2\pi i\frac{|\vec v|}{r+k}}J(e^{\vec v})(t'_{\mu})$, $J(e^{\vec v})(t)=0$ if $t\notin {T'}_k^{reg}$, we have
\ga{5.8} {\aligned &\sum_{\mu\in P'_k}S_{\mu}\left({\rm exp}\,2\pi i\frac{\vec v}{r+k}\right)\cdot \overline{S_{\mu}\left({\rm exp}\,2\pi i\frac{\vec v}{r+k}\right)}=\\&
\frac{1}{|\mathfrak{S}_{r-1}|}
\prod_{i<j}\left(2\sin\,\pi \frac{v_i-v_j}{r+k}\right)^{-2}\sum_{t\in  T'_k}J(e^{\vec v})(t)
\overline{J(e^{\vec v})(t)}\\&=\frac{r(r+k)^{r-1}}{\prod_{i<j}\left(2\sin\,\pi \frac{v_i-v_j}{r+k}\right)^2}.\endaligned}
Thus \eqref{5.7} and \eqref{5.8} imply the formula \eqref{5.2}. The proof of formula \eqref{5.3} is similar with formula \eqref{5.8}, we omit it.

Now we are going to prove formula \eqref{5.4}. To simpify notation, let
$$G_{\mu}(\vec v):={\rm exp}\,2\pi i\frac{-|\mu|\cdot |\vec {v}|}{r(r+k)}\cdot S_{\mu}\left({\rm exp}\,2\pi i\frac{\vec v}{r+k}\right).$$
Then it is equivalent to prove that, when $\vec v\neq\vec{v'}$, we have
\ga{5.9}{\sum_{\mu\in W_k}G_{\mu}(\vec v)\overline{G_{\mu}(\vec {v'})}=0.}
Let $\lambda^{\mu}=(\lambda_1^{\mu},...,\lambda^{\mu}_r)$ with $\lambda_i^{\mu}=\mu_i+r-i-\frac{|\mu|+|\rho|}{r}$, then
$$\aligned G_{\mu}(\vec v)&=\frac{{\rm exp}\,2\pi i\frac{|\rho|\cdot |\vec {v}|}{r(r+k)}}{\Delta(\vec v)}\sum_{\tau\in \mathfrak{S}_r}\epsilon(\tau)e^{2\pi i\frac{\lambda^{\mu}_1}{r+k}v_{\tau(1)}}
\cdot\cdots\cdot e^{2\pi i\frac{\lambda^{\mu}_r}{r+k}v_{\tau(r)}}\\&=\frac{{\rm exp}\,2\pi i\frac{|\rho|\cdot |\vec {v}|}{r(r+k)}}{\Delta(\vec v)}\sum_{\tau\in \mathfrak{S}_r}\epsilon(\tau)e^{\tau(\vec v)}\left({\rm exp}\,2\pi i\frac{\lambda^{\mu}}{r+k}\right)\\&=\frac{{\rm exp}\,2\pi i\frac{|\rho|\cdot |\vec {v}|}{r(r+k)}}{\Delta(\vec v)}J(e^{\vec v})(t_{\lambda^{\mu}}),\quad t_{\lambda^{\mu}}={\rm exp}\,2\pi i\frac{\lambda^{\mu}}{r+k}.\endaligned$$
Since $e^{\sigma(\vec v)}$, $e^{\tau(\vec {v'})}$ ($\forall\,\sigma,\,\tau\in\mathfrak{S}_r$) are different
characters of a subgroup $T_k=\{\,t=\mathrm{diag}(e^{\frac{2\pi i}{r+k}t_1}, \cdots, e^{\frac{2\pi i}{r+k}t_r})\,|\sum t_i=0,\,t_i-t_j\in\mathbb{Z}\,\}\subset {\rm GL}(r)$
whenever $\vec v\neq \vec {v'}$, we have
$$\aligned \sum_{\mu\in W_k}G_{\mu}(\vec v)\overline{G_{\mu}(\vec {v'})}&=
\frac{{\rm exp}\,2\pi i\frac{|\rho|\cdot (|\vec {v}|-|\vec{v'}|)}{r(r+k)}}{\Delta(\vec v)\overline{\Delta(\vec v')}}
\sum_{\mu\in W_k}J(e^{\vec v})(t_{\lambda^{\mu}})\cdot\overline{J(e^{\vec {v'}})(t_{\lambda^{\mu}})}\\&=
\frac{{\rm exp}\,2\pi i\frac{|\rho|\cdot (|\vec {v}|-|\vec{v'}|)}{r(r+k)}}{\Delta(\vec v)\overline{\Delta(\vec v')}|\mathfrak{S}_r|}
\sum_{t\in T_k}J(e^{\vec v})(t)\cdot\overline{J(e^{\vec {v'}})(t)}=0.
\endaligned$$
\end{proof}

\begin{nota}\label{nota5.1} For $\vec n(x)=(n_1(x),n_2(x),\cdots,n_{l_x+1}(x))$ and $$\vec a(x)=(a_1(x),a_2(x),\cdots,a_{l_x+1}(x))$$ with
$\sum n_i(x)=r$, $0\leq a_1(x)<a_2(x)<\cdots
<a_{l_x+1}(x)< k$, define
\ga{10} {\lambda_x=(\,\,\overbrace{\lambda_1,\ldots,\lambda_1}^{n_1(x)}\,,\,\,\overbrace{\lambda_2,\ldots,\lambda_2}^{n_2(x)}\,,\,\,\ldots,\, \,\overbrace{\lambda_{l_x+1},\ldots,\lambda_{l_x+1}}^{n_{l_x+1}(x)}\,\,)}
where $\lambda_i=k-a_i(x)$ ($1\le i\le l_x+1$).
\end{nota}

\begin{thm}\label{thm5.3} For given data $\omega=(k,\{\vec n(x),\vec a(x)\}_{x\in I})$, let
$$S_{\omega}(z_1,...,z_r)=\prod_{x\in I}S_{\lambda_x}(z_1,...,z_r),\quad |\omega|=\sum_{x\in I}|\lambda_x|$$
where $S_{\lambda_x}(z_1,...,z_r)$ are Schur polynomials and $|\lambda_x|$ denotes the total number of boxes in a Young diagram
associated to $\lambda_x$. Then
\ga{5.11}{\aligned &D_g(r,d,\omega)=(-1)^{d(r-1)}\left(\frac{k}{r}\right)^g(r(r+k)^{r-1})^{g-1}\\&\sum_{\vec v}
\frac{{\rm exp}\left(2\pi i\left(\frac{d}{r}-\frac{|\omega|}{r(r+k)}\right)\sum_{i=1}^{r}v_i\right)S_{\omega}\left({\rm exp}\,2\pi i\frac{\vec v}{r+k}\right)} {\prod_{i<j}\left(2\sin\,\pi \frac{v_i-v_j}{r+k}\right)^{2(g-1)}}\endaligned}
where $\vec v=(v_1,v_2,\ldots,v_r)$ runs through the integers $$0=v_r<\cdots <v_2<v_1<r+k.$$
\end{thm}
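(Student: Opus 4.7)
The plan is to denote by $V_g(r,d,\omega)$ the right-hand side of \eqref{5.11} and to prove $V_g = D_g$ by induction, using that both satisfy the recurrence relations \eqref{4.3} and \eqref{4.9}. Iterating these recurrences reduces any triple $(g,d,\omega)$ to the base case $(g,|I|)=(0,3)$, where \eqref{5.11} specialises to the classical Verlinde formula for fusion coefficients on the three-pointed sphere, which is taken as input from the RCFT literature (see \cite{Be3}, \cite{TUY}). The real content is therefore to verify that $V_g$ obeys both recurrences.

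For \eqref{4.3} I first unpack Notation \ref{nota4.5} together with Notation \ref{nota5.1} to identify the two new Young diagrams attached at the additional parabolic points. A direct bookkeeping shows
$$\lambda_{x_1} \;=\; \mu \,+\, (k-\mu_1-\mu_r)\,\mathbf{1}, \qquad \lambda_{x_2} \;=\; \mu^{*}:=(k-\mu_r,\,k-\mu_{r-1},\,\ldots,\,k-\mu_1),$$
so that $S_{\lambda_{x_1}}(z)=(z_1\cdots z_r)^{k-\mu_1-\mu_r}S_\mu(z)$ and $S_{\lambda_{x_2}}(z)=S_{\mu^{*}}(z)$, while $|\omega^\mu|=|\omega|+2rk-r(\mu_1+\mu_r)$. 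At $z_j=\exp(2\pi i v_j/(r+k))$ the shift factor from $S_{\lambda_{x_1}}$ combines with the exponential correction $\exp\!\bigl(-2\pi i(|\omega^\mu|-|\omega|)|\vec v|/(r(r+k))\bigr)=(z_1\cdots z_r)^{-(2k-\mu_1-\mu_r)}$ to produce a single $\mu$-independent residual factor $(z_1\cdots z_r)^{-k}$. The identity \eqref{5.2} of Proposition \ref{prop5.1} then collapses the $\mu$-sum (the range $0\le\mu_r\le\cdots\le\mu_1<k$ in \eqref{4.3} is exactly $P_k$), and a short bookkeeping of the prefactors yields $\sum_\mu V_{g-1}(r,d,\omega^\mu)=V_g(r,d,\omega)$.

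For \eqref{4.9} the argument is parallel but the decisive tool is orthogonality. The product $V_{g_1}(r,0,\omega_1^\lambda)\,V_{g_2}(r,d,\omega_2^\lambda)$ produces a double sum over $\vec v_1,\vec v_2$. After writing the two new Schur factors at the split points in terms of $S_\lambda(z_1)$ and $S_{\lambda^{*}}(z_2)$, with the same kind of shift-absorption into the exponential phases as above, the sum over $\lambda \in W'_k$ is handled by \eqref{5.4}, which forces $\vec v_1=\vec v_2$; the residual diagonal is then evaluated by \eqref{5.3}. The congruence condition singling out $W'_k\subset W_k$ is exactly what ensures that the $d$-dependent phase factors on the two sides align, and a final check of the numerical prefactors gives $\sum_{\lambda\in W'_k}V_{g_1}(r,0,\omega_1^\lambda)\,V_{g_2}(r,d,\omega_2^\lambda)=V_g(r,d,\omega)$.

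With both recurrences established for $V_g$, the induction is completed: every $(g,d,\omega)$ reduces, by iterated use of \eqref{4.3} to lower $g$ and of \eqref{4.9} to lower $|I|$, to the base case $(0,0,3)$ on which $V_g$ and $D_g$ agree. The main technical obstacle is the shift-bookkeeping in the first recurrence: one has to translate Notation \ref{nota4.5} through Notation \ref{nota5.1} and check the exact cancellation between $(z_1\cdots z_r)^{k-\mu_1-\mu_r}$ and the exponential correction in \eqref{5.11} in order for \eqref{5.2} to apply without leftover factors. The reducible case is less delicate computationally but relies crucially on the orthogonality \eqref{5.4} to collapse the double sum over $(\vec v_1,\vec v_2)$ into a single sum.
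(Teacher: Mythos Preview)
Your approach is essentially the paper's: verify that the right-hand side $V_g$ satisfies both recurrences via the Schur identities of Proposition~\ref{prop5.1} (identity \eqref{5.2} for the genus recurrence \eqref{4.3}, and \eqref{5.3} together with the orthogonality \eqref{5.4} for the point-splitting recurrence \eqref{4.9}), then take the genus-zero three-point case as external input; the paper organizes this as Lemmas~\ref{lem5.4} and~\ref{lem5.5} and likewise defers the $(g,d,|I|)=(0,0,3)$ base case. One place to sharpen: the orthogonality \eqref{5.4} is a sum over $W_k$, not $W'_k$, so before invoking it you need the observation (made explicitly in the paper) that $V_0(r,0,\omega_1^{\mu})=0$ for $\mu\in W_k\setminus W'_k$, which is what your remark about the congruence and phase alignment is gesturing at.
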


\begin{proof} It is easy to check the formula when $g=0$ and $|I|\le 2$.
The case $g=0$, $d=0$ and $|I|=3$ is much more involved (we leave for another occasion).
Then the proof is done by the following lemmas.
\end{proof}

\begin{lem}\label{lem5.4} If the formula \eqref{5.11} holds when $g=0$, then it holds for any $g>0$.
\end{lem}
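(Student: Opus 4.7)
The plan is to prove \eqref{5.11} for every $g>0$ by induction on $g$, taking the $g=0$ case as the hypothesis of the lemma. Supposing \eqref{5.11} holds in genus $g-1$ for arbitrary parabolic data, the inductive step begins by applying the recurrence \eqref{4.3}:
$$D_g(r,d,\omega)=\sum_{\mu\in P_k}D_{g-1}(r,d,\omega^\mu),\qquad P_k=\{(\mu_1,\dots,\mu_r)\mid 0\le\mu_r\le\dots\le\mu_1<k\},$$
where $\omega^\mu$ adjoins two new parabolic points $x_1,x_2$ to $\omega$ with flag data dictated by $\mu$ via Notation \ref{nota4.5}. Substituting the induction hypothesis into the right-hand side and swapping the $\mu$- and $\vec v$-summations, the task reduces to evaluating the inner sum over $\mu$ and checking that the outcome promotes every factor in \eqref{5.11} from exponent $g-1$ to $g$.

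The heart of the proof is the decoding of $\lambda^{x_1}$, $\lambda^{x_2}$ from Notations \ref{nota4.5} and \ref{nota5.1}. A direct unwinding yields
$$\lambda^{x_1}=\mu+(k-\mu_1-\mu_r)(1,\dots,1),\qquad \lambda^{x_2}=(k-\mu_r,\dots,k-\mu_1)=\mu^*,$$
where $\mu^*$ is the dual partition from Proposition \ref{prop5.1}; note that $\mu_1<k$ guarantees $\lambda^{x_1}$ has non-negative entries. The scaling identity $S_{\lambda+c(1^r)}(z)=(z_1\cdots z_r)^c\,S_\lambda(z)$ for Schur polynomials then gives
$$S_{\lambda^{x_1}}(z)\,S_{\lambda^{x_2}}(z)=\exp\!\bigl(2\pi i\tfrac{k-\mu_1-\mu_r}{r+k}|\vec v|\bigr)\,S_\mu(z)\,S_{\mu^*}(z),\qquad z_j=e^{2\pi iv_j/(r+k)}.$$
Together with $|\omega^\mu|=|\omega|+|\lambda^{x_1}|+|\lambda^{x_2}|=|\omega|+2rk-r(\mu_1+\mu_r)$, a short phase calculation shows that the $\mu$-dependence of the exponential prefactor $e^{2\pi i(d/r-|\omega^\mu|/(r(r+k)))|\vec v|}$ together with the phase above collapses to a $\mu$-independent factor $e^{-2\pi ik|\vec v|/(r+k)}$.

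The inner sum therefore equals $e^{-2\pi ik|\vec v|/(r+k)}\sum_{\mu\in P_k}S_\mu(z)S_{\mu^*}(z)$; by identity \eqref{5.2} of Proposition \ref{prop5.1} this is $k(r+k)^{r-1}/\prod_{i<j}\bigl(2\sin\pi\tfrac{v_i-v_j}{r+k}\bigr)^2$, the stray phase cancelling against the $e^{2\pi ik|\vec v|/(r+k)}$ emitted by \eqref{5.2}. The extra factor $1/\prod(2\sin)^2$ promotes the denominator exponent from $2(g-2)$ to $2(g-1)$, while
$$\bigl(\tfrac{k}{r}\bigr)^{g-1}\bigl(r(r+k)^{r-1}\bigr)^{g-2}\cdot k(r+k)^{r-1}=\bigl(\tfrac{k}{r}\bigr)^g\bigl(r(r+k)^{r-1}\bigr)^{g-1}$$
upgrades the prefactor correctly. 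Since $(-1)^{d(r-1)}$, the degree $d$, and the data $\omega$ are all untouched, the result is exactly the right-hand side of \eqref{5.11} at genus $g$. The only delicate step is the identification of $\lambda^{x_1}$ and $\lambda^{x_2}$ and the attendant phase bookkeeping; once these are settled, the Schur identity \eqref{5.2} does all of the remaining work.
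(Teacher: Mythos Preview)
Your proof is correct and essentially identical to the paper's: induct on $g$ via \eqref{4.3}, identify $\lambda_{x_2}=\mu^*$ and $\lambda_{x_1}$ as a constant shift of $\mu$, then invoke identity \eqref{5.2}. The only cosmetic difference is that you track the shift phase explicitly, whereas the paper first notes that both sides of \eqref{5.11} are invariant under $\lambda_x\mapsto\lambda_x+(a,\dots,a)$ and simply sets $\lambda_{x_1}=\mu$ (your shift constant $k-\mu_1-\mu_r$ is in fact the correct one; the paper's displayed $\mu_1+\mu_r-k$ is a harmless sign slip given the invariance remark).
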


\begin{proof} It is easy to see that both side of \eqref{5.11} are invariant when any $\lambda_x$ is replaced by $\lambda_x'=\lambda_x+(a,...,a)$.
By the recurrence relation \eqref{4.3},
\ga{5.12} {D_g(r,d,\omega)=\sum_{\mu}D_{g-1}(r,d,\omega^{\mu})}
where $\omega^{\mu}=(k, \{\vec n(x),\,\vec a(x)\}_{x\in I\cup\{x_1,\,x_2\}})$ was defined in Notation \ref{nota4.5} and
$\mu=(\mu_1,\ldots,\mu_r)$ runs through the integers $0\le\mu_r\le\cdots\le\mu_1< k$.

It is easy to check that $\lambda_{x_2}=(k-\mu_r,\ldots,k-\mu_1)$ and
$$\lambda_{x_1}=(\mu_1,\ldots,\mu_r)+(\mu_1+\mu_r-k,\,\mu_1+\mu_r-k,\ldots,\, \mu_1+\mu_r-k)$$
(in Notation \ref{nota5.1}). Thus, without loss of generality, we can assume
$$\lambda_{x_1}=\mu=(\mu_1,\ldots,\mu_r),\quad \lambda_{x_2}=\mu^*=(k-\mu_r,\ldots,k-\mu_1).$$
Assume that formula \eqref{5.11} holds for $g-1$, then
\ga{5.13}{\aligned &D_{g-1}(r,d,\omega^{\mu})=(-1)^{d(r-1)}\left(\frac{k}{r}\right)^{g-1}(r(r+k)^{r-1})^{g-2}\\&\sum_{\vec v}
\frac{{\rm exp}\left(2\pi i\left(\frac{d}{r}-\frac{|\omega^{\mu}|}{r(r+k)}\right)\sum_{i=1}^{r}v_i\right)S_{\omega^{\mu}}\left({\rm exp}\,2\pi i\frac{\vec v}{r+k}\right)} {\prod_{i<j}\left(2\sin\,\pi \frac{v_i-v_j}{r+k}\right)^{2(g-2)}}\endaligned}
where $|\omega^{\mu}|=|\omega|+k\cdot r$, $S_{\omega^{\mu}}
=S_{\omega}\cdot S_{\mu}\cdot S_{\mu^*}$. By \eqref{5.12} and \eqref{5.13},
$$\aligned&D_g(r,d,\omega)=(-1)^{d(r-1)}\left(\frac{k}{r}\right)^g(r(r+k)^{r-1})^{g-1}\\&\sum_{\vec v}
\frac{{\rm exp}\left(2\pi i\left(\frac{d}{r}-\frac{|\omega|}{r(r+k)}\right)\sum_{i=1}^{r}v_i\right)S_{\omega}\left({\rm exp}\,2\pi i\frac{\vec v}{r+k}\right)} {\prod_{i<j}\left(2\sin\,\pi \frac{v_i-v_j}{r+k}\right)^{2(g-1)}}\\&{\rm exp}\left(-2\pi i\frac{k}{r+k}\sum_{i=1}^{r}v_i\right)
\frac{\prod_{i<j}\left(2\sin\,\pi \frac{v_i-v_j}{r+k}\right)^2}{k(r+k)^{r-1}}\\&
\sum_{\mu}S_{\mu}\left({\rm exp}\,2\pi i\frac{\vec v}{r+k}\right)\cdot S_{\mu^*}\left({\rm exp}\,2\pi i\frac{\vec v}{r+k}\right).\endaligned$$
Then the formula \eqref{5.11} holds by the identity \eqref{5.2} in Proposition \ref{prop5.1}.

\end{proof}

\begin{lem}\label{lem5.5} If the formula \eqref{5.11} for $D_0(r,d,\omega)$ holds when $|I|\le3$, then it holds for all $D_0(r,d,\omega)$.
\end{lem}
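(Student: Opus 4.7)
I plan to induct on $n := |I|$, with base case $n \le 3$ provided by hypothesis. For $n \ge 4$, fix any partition $I = I_1 \sqcup I_2$ with $2 \le |I_j| \le n-2$ together with positive integers $c_1, c_2$ such that $\ell_j = c_j\ell/(c_1+c_2) \in \mathbb{Z}$, then invoke the factorization rule \eqref{4.9} with $g = g_1 = g_2 = 0$:
\[
D_0(r, d, \omega) \;=\; \sum_{\mu \in W'_k} D_0(r, 0, \omega_1^\mu)\cdot D_0(r, d, \omega_2^\mu).
\]
Each $\omega_j^\mu$ is supported on $|I_j|+1 < n$ parabolic points, so the inductive hypothesis supplies \eqref{5.11} (at $g = 0$) for both factors.

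\textbf{Reduction to Schur orthogonality.} Substituting yields a triple sum over $\mu \in W'_k$, $\vec v$, and $\vec{v'}$. Set $\zeta(\vec v) := \exp(2\pi i \vec v/(r+k))$ and $D(\vec v) := \prod_{i<j}(2\sin\pi(v_i - v_j)/(r+k))^2$. By Notation \ref{nota4.5} one has $\lambda_{x_2}(\mu) = \mu^* = (k-\mu_r, \ldots, k-\mu_1)$, so $S_{\lambda_{x_2}(\mu)} = S_{\mu^*}$; and $\lambda_{x_1}(\mu)$ differs from $\mu$ by a constant shift, whose Schur contribution $S_{\mu + c\mathbf 1}(z) = (z_1 \cdots z_r)^c S_\mu(z)$ is a tractable phase. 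Pulling out the $\mu$-independent Schur factors (coming from the fixed data at $I_1$ and $I_2$) and all $(\vec v, \vec{v'})$-dependent exponentials, the residual $\mu$-sum becomes
\[
\sum_{\mu \in W'_k} e^{-2\pi i|\mu||\vec{v'}|/(r(r+k))}\, e^{-2\pi i|\mu^*||\vec v|/(r(r+k))}\, S_\mu(\zeta(\vec{v'}))\, S_{\mu^*}(\zeta(\vec v)),
\]
which is precisely the object appearing in Proposition \ref{prop5.1} -- but summed over $W'_k$ rather than $W_k$.

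\textbf{Collapse via character averaging.} Identities \eqref{5.3}--\eqref{5.4} combine into the master orthogonality
\[
\sum_{\mu\in W_k} e^{-2\pi i(|\mu||\vec{v'}|+|\mu^*||\vec v|)/(r(r+k))}\, S_\mu(\zeta(\vec{v'}))\, S_{\mu^*}(\zeta(\vec v)) \;=\; \delta_{\vec v,\vec{v'}}\cdot\frac{r(r+k)^{r-1}}{D(\vec v)}.
\]
To pass from $W'_k$ to $W_k$, I would use the character decomposition
\[
\mathbf{1}_{W'_k}(\mu) \;=\; \frac{1}{r}\sum_{j=0}^{r-1} e^{2\pi ij N_{I_1}/r}\, e^{2\pi ij|\mu|/r}, \qquad N_{I_1} := \sum_{x\in I_1}\sum_i d_i(x)\,r_i(x),
\]
and absorb each character twist via the homogeneity $e^{2\pi ij|\mu|/r}\, S_\mu(\zeta(\vec{v'})) = S_\mu(e^{2\pi ij/r}\zeta(\vec{v'}))$. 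Applying the master orthogonality to each of the $r$ twisted sums and summing over $j$ then collapses the double $\vec v, \vec{v'}$ sum to a single sum over $\vec v$ in the fundamental domain $0 = v_r < \cdots < v_1 < r+k$.

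\textbf{Main obstacle.} The delicate step is the combinatorial bookkeeping: matching $|\omega_j^\mu|$ to its contributions from $I_j$ and from $\lambda_{x_j}(\mu)$, tracking the shift phase in $\lambda_{x_1}(\mu)$, and verifying that after the $\mathbb{Z}/r$ character average only the genuine diagonal $\vec v = \vec{v'}$ survives (so that the rescaled Schur arguments do not make distinct $\vec{v'}$'s line up with $\vec v$). Once these phases are aligned, the constant $r(r+k)^{r-1}$ produced by the master orthogonality cancels one copy in the denominator $(r(r+k)^{r-1})^2$, the residual phases combine into $\exp(2\pi i(d/r - |\omega|/(r(r+k)))|\vec v|)$, and the $\mu$-independent Schur product reassembles into $S_\omega(\zeta(\vec v))$, yielding \eqref{5.11} at $g = 0$ and completing the induction.
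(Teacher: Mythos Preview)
Your overall architecture matches the paper's: induct on $|I|$, apply the factorization \eqref{4.9} at $g=0$, replace each factor by its Verlinde expression, and collapse the resulting double sum over $(\vec v,\vec{v'})$ to the diagonal via the Schur orthogonality \eqref{5.3}--\eqref{5.4}. The substantive difference is in how you pass from the sum over $W'_k$ to a sum over all of $W_k$.

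The paper takes $|I_1|=2$ (so $\omega_1^{\mu}$ has exactly three parabolic points and lands in the base case) and then simply observes that the Verlinde number $V_0(r,0,\omega_1^{\mu})$ vanishes for every $\mu\in W_k\setminus W'_k$; hence the sum over $W'_k$ equals the sum over $W_k$ for free, and \eqref{5.3}--\eqref{5.4} apply directly with no further manipulation. This is a one-line check from the formula \eqref{5.11} itself.

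Your route via $\mathbb Z/r$ character averaging is more laborious and, as you yourself flag, has a real obstacle. After absorbing the twist $e^{2\pi ij|\mu|/r}$ into $S_{\mu}$ by homogeneity, the Schur functions are evaluated at $e^{2\pi ij/r}\zeta(\vec{v'})$, which is not of the form $\zeta(\vec{v''})$ for an integer vector $\vec{v''}$ unless $r\mid k$. But the proofs of \eqref{5.3}--\eqref{5.4} rest on discrete character orthogonality over the finite torus $T_k$ indexed by \emph{integer} $\vec v$; they do not obviously extend to the twisted arguments. So the step ``apply the master orthogonality to each of the $r$ twisted sums'' is not justified as stated, and closing this gap would require either reproving a shifted orthogonality or some further symmetry argument. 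The paper's vanishing trick sidesteps this difficulty entirely and is the cleaner way to finish.
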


\begin{proof} The proof is by induction on the number of parabolic points. Let $V_g(r,d,\omega)$ denote the right hand side of
formula \eqref{5.11} (the Verlinde number). By Theorem \ref{thm4.12}, let $I=I_1\cup I_2$ with $|I_1|=2$, we have
$$D_0(r,d,\omega)=\sum_{\mu\in W'_k}V_0(r,0,\omega_1^{\mu})\cdot V_0(r,d,\omega_2^{\mu}).$$
It is not difficult to check that $V_0(r,0,\omega_1^{\mu})=0$ for $\mu\in W_k\setminus W'_k$. Thus
$$\aligned &D_0(r,d,\omega)=\sum_{\mu\in W_k}V_0(r,0,\omega_1^{\mu})\cdot V_0(r,d,\omega_2^{\mu})=\frac{(-1)^{d(r-1)}}{(r(r+k)^{r-1})^2}\sum_{\vec v,\,\vec {v'}}\\&
\frac{{\rm exp}\left(2\pi i\left(-\frac{|\omega_1|}{r(r+k)}\right)|\vec v|\right)} {\prod_{i<j}\left(2\sin\,\pi \frac{v_i-v_j}{r+k}\right)^{-2}}
\cdot\frac{{\rm exp}\left(2\pi i\left(\frac{d}{r}-\frac{|\omega_2|}{r(r+k)}\right)|\vec {v'}|\right)} {\prod_{i<j}\left(2\sin\,\pi \frac{v'_i-v'_j}{r+k}\right)^{-2}}\cdot\\&
S_{\omega_1}\left({\rm exp}\,2\pi i\frac{\vec v}{r+k}\right)\cdot S_{\omega_2}\left({\rm exp}\,2\pi i\frac{\vec {v'}}{r+k}\right)\cdot\sum_{\mu\in W_k}
{\rm exp}\,2\pi i\frac{-|\mu|\cdot |\vec v|}{r(r+k)}\\&
{\rm exp}\,2\pi i\frac{-|\mu^*|\cdot |\vec {v'}|}{r(r+k)}\cdot S_{\mu}\left({\rm exp}\,2\pi i\frac{\vec v}{r+k}\right)\cdot S_{\mu^*}\left({\rm exp}\,2\pi i\frac{\vec {v'}}{r+k}\right)
\endaligned$$ and we are done by \eqref{5.3} and \eqref{5.4} of Proposition \ref{prop5.1}.
\end{proof}

\bibliographystyle{plain}

\begin{thebibliography}{99}

\bibitem{Be} A.Beauville: {\em Vector bundles on curves and generalized theta functions: recent results and open problems},
Current topics in complex algebraic geometry (Berkeley, CA, 1992/93), Math. Sci. Res. Inst. Publ.,{\bf 28} (1995), 17--33.

\bibitem{Be2} A.Beauville: {\em Conformal blocks, Fusion rings and the Verlinde formula},
Proc. of the Hirzebruch 65 Conf. on Algebraic Geometry, Israel Math. Conf. Proc., {\bf 9} (1996), 75--96.

\bibitem{Be3} A.Beauville: {\em Vector bundles on Riemann surfaces and Conformal Field Theory},
 Algebraic and Geometric Methods in Mathematical Physics, Kluwer (1996),  145--166.

\bibitem{BL} A.Beauville and Y.Laszlo: {\em Conformal blocks and generalized theta functions},
 Comm. Math. Phys. {\bf 164} (1994), 385--419.

\bibitem{Br} M. Brion and S. Kumar: {\em Frobenius Splitting Methods in Geometry and Representation Theory},
Progress in Mathematics, {\bf 231}, Birkh{\"a}user Boston Inc. MA, 2005.

\bibitem{Fa} G. Faltings: {\em A proof for the Verlinde formula}, Journal of Algebraic Geometry, {\bf
3} (1994), 347--374.

\bibitem{FH} W. Fulton and J. Harris: {\em Representation Theory: A first course},
Graduate Texts in Mathematics, {\bf 129}, Springer-Verlag, New York, 1991.


\bibitem{Gr} A. Grothendieck: {\em EGA IV}, Publications de IHES, {\bf
28} (1966), 5--255.

\bibitem{Kn} F. Knop: {\em Der kanonische Moduleines Invariantenrings}, Journal of Algebra, {\bf
127} (1989), 40--54.

\bibitem{KLT} S. Kumar, N. Lauritzen and J. F. Thomsen: {\em Frobenius splitting of cotangent bundles of flag varieties},
Invent. Math., {\bf 136} (1999), 603--621.

\bibitem{LRT} N. Lauritzen, U. Raben-Pedersen and J. F. Thomsen: {\em Global F-regularity of Schubert varieties with applications to
D-modules}, J. Amer. Math. Soc., {\bf 19} (2006), 345--355.

\bibitem{MO} A. Marian and D. Operea: {\em GL Verlinde numbers and the Grassmann TQFT}, Portugal Math. (N.S.), {\bf 67} (2010), 181--210.

\bibitem{MeR} V. B. Mehta and T. R. Ramadas: {\em Moduli of vector bundles, Frobenius splitting, and invariant theory},
Ann. of Math. {\bf 144} (1996), 269--313.

\bibitem{MeRa} V. B. Mehta and T. R. Ramanathan: {Frobenius splitting and cohomology vanishing for Schubert varieties},
Ann. of Math. {\bf 122} (1985), 27--40.

\bibitem{MeRa88} V. B. Mehta and T. R. Ramanathan: {Schubert varieties in $G/B\times G/B$},
Compositio Math. 67 (1988), 355--358.

\bibitem{Mu} D. Mumford, J. Fogarty and F. Kirwan: {\em Geometric Invariant Theory},
Ergebnisse der Mathematik und ihrer Grenzgebiete, {\bf 34}, Springer-Verlag, 1994.

\bibitem{NR} M.S.Narasimhan and T.R. Ramadas: {\em Factorisation of generalised theta functions I},
Invent. Math., {\bf 114} (1993), 217--235.

\bibitem{Pa} C. Pauly: {\em Espaces de modules de fibr{\'e}s paraboliques et blocs conformes},
Duke Math. Journal, {\bf 84} (1996), 565--623.

\bibitem{R} T.R. Ramadas: {\em Factorisation of generalised theta functions II: the Verlinde formula},
Topology, {\bf 35},(1996),  641--654.

\bibitem{RR}S. Ramanan and A. Ramanathan: {\em Projective normality of flag varieties and Schubert varieties},
Invent. Math., {\bf 79} (1985), 217--224.

\bibitem{Sc} K.Schwede and K.E.Smith: {\em Globally F-regular and log Fano varieties}, Advanced in Mathematics.
{\bf224}, (2010), 863--894.

\bibitem{Se} C.S.Seshadri: {\em Geometric reductivity over arbitrary base}, Adv. Math. {\bf26} (1977), 225--274.

\bibitem{Sm} K.E.Smith: {\em Globally F-regular varieties: Applications to vanishing theorems for quotients of Fano varieties},
Michigan Math J., {\bf48}, (2000), 553--572.

\bibitem{Su1} X. Sun: {\em Degeneration of moduli spaces and generalized theta functions},
Journal of Algebraic Geometry, {\bf 9}, (2000), 459--527.

\bibitem{Su2} X. Sun: {\em Factorization of generalized theta functions in the reducible case},
Ark. Mat., {\bf 41} (2003), 165--202.

\bibitem{Su3} X. Sun: {\em Factorization of generalized theta functions revisited},
Algebra Colloquium., {\bf 24} (2017), no.1, 1--52.

\bibitem{TUY} A. Tsuchiya, K. Ueno and Y. Yamada: {\em Conformal field theory on universal family of stable curves with gauge symmetries},
Adv. Studies in Pure Math., {\bf 19} (1989), 459--566.

\bibitem{Ver} E. Verlinde: {\em Fusion rules and modular transformations in $2d$ conformal field theory},
Nuclear Physics B, {\bf 300} (1988), 360--376.


\end{thebibliography}

\renewcommand\refname{References}

\end{document}